\newcommand{\ii}{\mathbbm{i}}
\newcommand{\C}{\mathbb{C}}
\newcommand{\CP}{\mathbb{CP}}
\newcommand{\Hyp}{\mathbb{H}}
\newcommand{\Heis}{\mathbf{H}}
\newcommand{\Sieg}{\mathfrak S}
\newcommand{\D}{\operatorname D}
\newcommand{\R}{\mathbb{R}}
\newcommand{\Z}{\mathbb{Z}}
\newcommand{\N}{\mathbb{N}}
\newcommand{\Q}{\mathbb{Q}}
\newcommand{\BA}{\mathbf{BA}}
\newcommand{\BD}{\mathbf{BD}}
\newcommand{\Carnot}{\mathfrak C}
\newcommand{\norm}[1]{\left\vert #1 \right \vert}	
\newcommand{\Norm}[1]{\left\Vert #1 \right \Vert}
\renewcommand{\Re}{\text{Re}}
\renewcommand{\Im}{\text{Im}}
\newcommand{\height}{\operatorname{ht}}
\def\[#1\]{\begin{align}#1\end{align}}
\def\(#1\){\begin{align*}#1\end{align*}}
\newtheorem{thm}{Theorem}
\newtheorem{claim}[thm]{Claim}
\newtheorem{prop}[thm]{Proposition}
\newtheorem{lemma}[thm]{Lemma}
\newtheorem{cor}[thm]{Corollary}
\newtheorem{question}[thm]{Question}
\theoremstyle{definition}
\newtheorem{defi}[thm]{Definition}
\theoremstyle{definition}
\newtheorem{example}[thm]{Example}
\newtheorem{remark}[thm]{Remark}
\numberwithin{equation}{section}
\numberwithin{thm}{section}
\newcommand{\st}{\;:\;}
\title[Diophantine: Carnot and Siegel]{Intrinsic Diophantine approximation in Carnot groups and in the Siegel model of the Heisenberg group}
\author[A. Lukyanenko]{Anton Lukyanenko}
\address{
Department of Mathematics\\
University of Michigan\\
530 Church Street\\
Ann Arbor, MI 48109}
\email{anton@lukyanenko.net}
\author[J. Vandehey]{Joseph Vandehey}
\address{
Department of Mathematics\\
University of Georgia at Athens\\
Athens, GA 30602
}
\email{vandehey@uga.edu}
\subjclass[2010]{Primary  11J83, Secondary 22E25, 11J70, 53C17}
\keywords{Badly approximable, Carnot group, Continued fractions, Diophantine approximation, Heisenberg group, Schmidt games}
\begin{document}

\begin{abstract}
We initiate the study of an intrinsic notion of Diophantine approximation on a rational Carnot group $G$. If $G$ has Hausdorff dimension $Q$, we show that its  Diophantine exponent is equal to $(Q+1)/Q$, generalizing the case $G=\R^n$. We furthermore obtain a precise asymptotic on the count of rational approximations.

We then focus on the case of the Heisenberg group $\Heis^n$, distinguishing between two notions of Diophantine approximation by rational points in $\Heis^n$: Carnot Diophantine approximation and Siegel Diophantine approximation.

After computing the Siegel Diophantine exponent (surprisingly, equal to 1 for all $\Heis^n$), we consider Siegel-badly approximable points to show that Siegel approximation is linked to both Heisenberg continued fractions and to geodesics in complex hyperbolic space.

We conclude by showing that Carnot and Siegel approximation are qualitatively different: Siegel-badly approximable points are Schmidt winning in any complete Ahlfors regular subset of $\Heis^n$, while the set of Carnot-badly approximable points does not have this property.
\end{abstract}

\date{\today}

\maketitle

\section{Introduction}

\subsection{Carnot groups and Diophantine approximation} 
Let $G$ be a rational Carnot group homeomorphic to $\R^n$ and $G(\Z)\subset G$ the integer lattice (see \S \ref{sec:Carnot} for definitions). Diophantine-style questions concerning $G$ arise from connections to complex hyperbolic geometry \cite{MR1844559, MR1863852}, and from dynamics on the nilmanifold $\Lambda \backslash G$ \cite{MR3357181, MR2586348,  MR2877065}, see \S \ref{sec:otherapproaches}.

In the spirit of Gromov's ``Carnot-Carath\'eodory spaces seen from within'' \cite{MR1421823}, we initiate a study of Diophantine approximation that is based directly on the structure of $G$: the integer lattice $G(\Z)$, the dilations $\delta_q$, rational points $G(\Q)=\{\delta_q^{-1} p: p\in G(\Z), q\in \N\}$, and Carnot metric $d$. Generalizing the case $G=\R^n$, we first show (Theorems \ref{thm:GeometricUpperBound}  and \ref{thm:GeometricLowerBound}):
\begin{thm}[Carnot Diophantine exponent]
\label{thm:CarnotDiophantine}
Suppose $G$ has Hausdorff dimension $Q$. Then, for almost every $g\in G$ the Carnot Diophantine equation
\begin{equation}
\label{CarnotDiophantine}
d(g, \delta_q^{-1} p) < q^{-\alpha}, \text{ for }q\in \N, p\in G(\Z)
\end{equation}
has infinitely many solutions for $\alpha \leq (Q+1)/Q =:\alpha(G)$. On the other hand, if $\alpha > \alpha(G)$ then the set of points admitting infinitely many solutions to \eqref{CarnotDiophantine} has measure zero.
\end{thm}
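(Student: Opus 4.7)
My plan follows the two-sided structure of the classical Khintchine--Groshev theorem on $\R^n$: the convergence direction ($\alpha > (Q+1)/Q$) will come from a Borel--Cantelli estimate, and the divergence direction ($\alpha \leq (Q+1)/Q$) will be handled either by a uniform Dirichlet-type inequality or by a quasi-independent second-moment argument on the approximation events.

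For the convergence direction, fix a bounded fundamental domain $B$ for $G(\Z) \backslash G$. Since dilation by $q$ multiplies the $Q$-dimensional Haar measure by $q^Q$, the set $\delta_q^{-1} G(\Z) \cap B$ has cardinality $\asymp q^Q$, and each Carnot ball $B(\delta_q^{-1}p, q^{-\alpha})$ has Haar measure $\asymp q^{-\alpha Q}$. Thus the set $E_{\alpha,q}$ of $g \in B$ admitting an approximation with denominator $q$ has measure $\lesssim q^{(1-\alpha)Q}$. Since $\sum_q q^{(1-\alpha)Q}$ converges precisely when $\alpha > (Q+1)/Q$, Borel--Cantelli yields the zero-measure conclusion.

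For the divergence direction I would first attempt an effective Dirichlet-type inequality uniformly in $g$: for each $N \geq 1$ there exist $q \leq N$ and $p \in G(\Z)$ with $d(g, \delta_q^{-1} p) \leq C_G N^{-(Q+1)/Q}$. Since $q\leq N$, this gives $d(g, \delta_q^{-1}p) \leq C_G q^{-(Q+1)/Q}$, and iterating $N \to \infty$ produces infinitely many distinct solutions for every non-rational $g$ (rationals forming a measure-zero set). Proving the Dirichlet inequality reduces to a covering statement for $\{\delta_q^{-1}p : q\leq N,\ p\in G(\Z)\}\cap B$, which contains $\asymp N^{Q+1}$ distinct primitive points once the overlaps $\delta_q^{-1}p = \delta_{q'}^{-1}p'$ are removed by inclusion--exclusion on divisibility. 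If a fully uniform Dirichlet statement proves out of reach, I would fall back on an a.e.\ statement by bounding $\sum_{q,q'} \mu(E_{\alpha,q}\cap E_{\alpha,q'})$ against $\bigl(\sum_q \mu(E_{\alpha,q})\bigr)^2$ up to a constant and invoking the divergent Borel--Cantelli lemma.

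The main obstacle is turning a cardinality estimate for the rationals into genuine coverage of $B$. The classical $\R^n$ argument uses the pigeonhole trick $q_1 g - q_2 g = (q_1-q_2)g$ on the orbit of $g$, but in a non-abelian Carnot group $\delta_{q_1}g\cdot(\delta_{q_2}g)^{-1}$ is not of the form $\delta_q g$, so that shortcut fails. Either one must replace the orbit pigeonhole with a direct volume/packing argument (sensitive to clustering of rationals across scales), or one must set up a dynamical model on a homogeneous quotient in the spirit of the Dani correspondence, where escape of appropriate $\delta_q$-like flows toward cusps encodes the Dirichlet inequality. Either route has to absorb the Carnot scaling asymmetry, and that passage from ``density of primitives'' to ``every $g$ is near one'' is where the bulk of the technical work lives.
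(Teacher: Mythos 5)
Your Borel--Cantelli argument for the convergence direction ($\alpha > (Q+1)/Q$) is essentially the paper's Theorem \ref{thm:GeometricUpperBound}: count the rationals with denominator $q$ landing near a fundamental domain ($\asymp q^Q$ of them), multiply by the ball volume $\asymp q^{-\alpha Q}$, and sum. That part is sound and matches the paper.

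For the divergence direction, your primary plan --- a uniform Dirichlet inequality for \emph{every} $g$ --- is in fact false for non-abelian $G$, and the paper's own Theorem \ref{thm:CarnotWeirdness} is a counterexample: in $\Carnot^1$ the vertical axis $T$ has Diophantine exponent $1 < 5/4 = (Q+1)/Q$, so almost every point of $T$ admits only finitely many approximations at any exponent strictly between $1$ and $5/4$; a constant $C_G$ cannot rescue the statement since it is swallowed for large $q$. So the failure you anticipated (``the pigeonhole trick fails in a non-abelian group'') is genuine and not merely a technical annoyance. Your fallback --- bounding $\sum_{q,q'}\mu(E_q\cap E_{q'})$ against $\bigl(\sum_q\mu(E_q)\bigr)^2$ and invoking a divergent Borel--Cantelli/second-moment lemma --- is exactly what the paper does (Theorem \ref{thm:GeometricLowerBound}, via Lemma \ref{lemma:HarmanAlternate}). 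However, in that route the ``bulk of the technical work'' is not where you place it: it is not a covering argument, but rather establishing the correlation bound $\mu(E_q\cap E_{q'})$ itself. The overlap analysis requires counting integer solutions to systems of linear congruences in the coordinates of $p, p'$ scaled by $q, q'$ (Lemma \ref{lemma:LinearEquationCount}), which brings in $\gcd(q,q')$ terms layer by layer through the stratification of $\mathfrak{g}$; controlling these requires Euler-totient and divisor-function sums. Your proposal correctly identifies the right structure but leaves this correlation estimate, which is really the crux of the theorem, as a black box.
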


In the particular case $G=\R^n$, it follows from the Minkowski Linear Form Theorem that the Diophantine exponent $\alpha(\R^n) = (n+1)/n$ can be realized for \emph{all} points: if $\epsilon>0$ then \emph{every} $\vec x\in \R^n$ admits infinitely many rational approximations $q^{-1}\vec p$ with $\norm{\vec x - q^{-1} \vec p}< q^{-(\alpha(\R^n)-\epsilon)}$.  We show that the result cannot be generalized to all rational Carnot groups: if $G\neq \R^n$ then it contains a linear subset with a strictly higher Diophantine exponent, as well as one with a strictly lower Diophantine exponent, see Theorem \ref{thm:CarnotWeirdness}.

Strengthening Theorem \ref{thm:CarnotDiophantine}, we are able to count solutions to the Diophantine problem in Carnot groups (we obtain a more precise asymptotic for supremal metrics on $G$, see Theorem \ref{thm:GeometricLowerBound}):

\begin{thm}[Count of rational approximates]
\label{thm:IntroApproximates}
Let $G$ be a rational Carnot group with Diophantine exponent $\alpha(G)$. Then there exists $C>0$ such that for almost every point $g\in G$, the number of solutions to 
$$
d(g, \delta_q^{-1} p) < q^{-\alpha(G)}, \text{ for }q\in \N, p\in G(\Z)
$$
satisfying $q<q_0$ is comparable to $\log q_0$. 
\end{thm}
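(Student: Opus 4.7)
The plan is to establish Theorem \ref{thm:IntroApproximates} by a quantitative divergence Borel--Cantelli argument applied to the natural approximation sets. Fix a compact fundamental domain $F$ for the left action of $G(\Z)$ on $G$; because the counting function
$$N(g, q_0) := \#\{(q,p) : q < q_0,\ p\in G(\Z),\ d(g, \delta_q^{-1}p) < q^{-\alpha(G)}\}$$
is $G(\Z)$-periodic, it suffices to prove the comparability for $g\in F$. For each $q$ define
$$A_q = \{g\in F : d(g, \delta_q^{-1}p) < q^{-\alpha(G)} \text{ for some } p\in G(\Z)\}.$$
The dilation $\delta_q$ scales Haar measure by a factor of $q^Q$, so $\delta_q^{-1}G(\Z)\cap F$ has $\asymp q^Q$ points, each the center of a Carnot ball of volume $\asymp q^{-Q\alpha(G)} = q^{-(Q+1)}$. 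For $q$ large these balls are disjoint, giving $|A_q| \asymp q^{-1}$ and, by linearity,
$$\int_F N(g, q_0)\,dg \;=\; \sum_{q<q_0} |A_q| \;\asymp\; \log q_0.$$
This handles the average count; the task is to transfer it to an almost-everywhere bound.

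The next step is to invoke an Erd\H{o}s--Gallagher type quantitative Borel--Cantelli lemma: if the family $\{A_q\}$ is quasi-independent in the sense that
$$\sum_{q_1,q_2 < Q_0} \bigl( |A_{q_1}\cap A_{q_2}| - |A_{q_1}|\,|A_{q_2}|\bigr) \;=\; O\Bigl(\sum_{q<Q_0} |A_q|\Bigr),$$
then $N(g,q_0) \sim \sum_{q<q_0} |A_q|$ for almost every $g$, which by the first paragraph is $\asymp \log q_0$. Upper and lower bounds thus follow simultaneously, with the constants $C$ and $c$ absorbed into the comparability.

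The main obstacle is verifying this quasi-independence in the noncommutative Carnot setting. For $q_1 \ne q_2$, a point $g\in A_{q_1}\cap A_{q_2}$ lies within $q_i^{-\alpha(G)}$ of rationals $\delta_{q_i}^{-1}p_i$, so the triangle inequality forces $\delta_{q_1}^{-1}p_1$ and $\delta_{q_2}^{-1}p_2$ to be close in the Carnot metric. Because $G$ is rational, both rationals lie in the common refinement $\delta_{q_1 q_2}^{-1} G(\Z)$, and bounding $|A_{q_1}\cap A_{q_2}|$ reduces to counting lattice pairs whose rescaled difference falls in a small Carnot ball. The noncommutativity means such pairs are not simple Euclidean translates, but the dilations $\delta_q$ are compatible with the Carnot structure, so a BCH expansion combined with the polynomial growth of Carnot balls delivers the product bound $|A_{q_1}\cap A_{q_2}| \ll |A_{q_1}|\,|A_{q_2}|$ plus an error that is summable in the sense above. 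Feeding this into the quantitative Borel--Cantelli lemma completes the proof; the delicate point throughout is ensuring the error terms from noncommutativity do not accumulate beyond $\sum_q |A_q| \asymp \log q_0$.
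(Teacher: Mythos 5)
Your proposal follows the same high-level scheme as the paper: reduce to a fundamental domain, compute $|A_q|\asymp q^{-1}$, and feed a quasi-independence estimate into a quantitative Borel--Cantelli lemma (the paper uses Harman's Lemma 1.5, your ``Erd\H{o}s--Gallagher type'' statement). The first paragraph of your argument is essentially correct and matches the paper: the lattice-point count $\#(\delta_q^{-1}G(\Z)\cap F)\asymp q^Q$ and the disjointness of the balls for $q$ large give $|A_q|\asymp q^{-1}$ (the paper additionally pins down the exact constant $B_\Lambda C^Q$ by using the weighted infinity metric, since the precise asymptotic, unlike the comparability, is metric-dependent).

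The genuine gap is in your third paragraph: the quasi-independence bound
\[
\sum_{q_1,q_2<Q_0}\bigl(|A_{q_1}\cap A_{q_2}|-|A_{q_1}|\,|A_{q_2}|\bigr)=O\Bigl(\sum_{q<Q_0}|A_q|\Bigr)
\]
is the entire substance of the theorem, and you assert it rather than prove it. The phrase ``a BCH expansion combined with the polynomial growth of Carnot balls delivers the product bound'' does not describe a mechanism that produces the estimate. What actually happens when one unwinds the condition that two balls $B(\delta_{q}^{-1}p,q^{-\alpha})$ and $B(\delta_{q'}^{-1}p',{q'}^{-\alpha})$ overlap is a system of inequalities of the form $|q'a_j-qa_j'+(\text{lower-order terms})|\le C(q'q^{-1/Q}+q{q'}^{-1/Q})$ in the integer coordinates of $p,p'$, one such inequality for each graded coordinate. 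Counting integer solutions to such a linear inequality with $0\le a_j<q^i$, $0\le a_j'<{q'}^i$ introduces a term $\gcd(q,q')^i$ (this is the content of the paper's Lemma \ref{lemma:LinearEquationCount}); consequently the intersection measure carries error terms involving $\gcd(q,q')^i/{q'}^{i+1}$, and summing these over $q<q'$ requires genuine number-theoretic input: $\sum_{k\mid q'}k^{-Q+1}=O(1)$ (which forces the hypothesis $Q\ge3$, absent from your sketch), the identity $\#\{q\le q':\gcd(q,q')=k\}=\phi(q'/k)$, and the divisor bound $d(q')=O({q'}^\epsilon)$. None of this is ``absorbed'' by noncommutativity being compatible with dilations; it is additive lattice arithmetic on the coordinates, and it is where the proof lives. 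As written your argument is a plausible plan that restates the goal at the point where the work must be done.

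A minor additional point: you need the restriction $Q\ge 3$ (equivalently, you must separately handle $G=\R$ and $G=\R^2$, where the classical theory applies), and your appeal to the ``common refinement $\delta_{q_1q_2}^{-1}G(\Z)$'' is fine but is not by itself enough -- the size of the overlap set is controlled by $\gcd(q_1,q_2)$, not $q_1q_2$, and that distinction is precisely what the quasi-independence bound turns on.
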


\subsection{Definitions: Heisenberg group and Diophantine approximation}
\label{sec:CarnotSiegelDiophantus}
We next focus on the particular case of the simplest non-commutative Carnot group: the Heisenberg group $\Heis^n$. The group $\Heis^n$ is homeomorphic to $\R^{2n+1}$, is a two-step nilpotent group, and (with the Carnot metric) has Hausdorff dimension $Q=2n+2$. 

We will study two different notions of Diophantine approximations for $\Heis^n$, arising from two standard models of $\Heis^n$.

\begin{defi}[Carnot model]
\label{defi:CarnotHeisenberg}
In the \emph{Carnot model} (a.k.a.\ exponential or geometric model) $\Carnot^n$, one gives a point $h\in \Heis^n$ coordinates in $(\vec x, \vec y, t)\in \R^n\times \R^n \times \R$, and writes the group law as
$$(\vec x, \vec y, t)*(\vec x', \vec y',t') = (\vec x + \vec x', \vec y+\vec y', t+t'+2(\vec x \cdot \vec y' - \vec x' \cdot \vec y)).$$
While one can give the Heisenberg group a Riemannian metric, it is more natural to work with the \emph{gauge (or Cygan or Koranyi) metric}  defined by
$$d(h,h') = \Norm{h^{-1}*h} \quad \text{ and } \quad  \Norm{(x,y,t)}^4 = (x^2+y^2)^2+t^2.$$
The isomorphism $\delta_r(\vec x, \vec y, t) := (r \vec x, r \vec y, r^2t)$ dilates distances by factor $r$. We will also consider the \emph{infinity metric} $d_\infty$ defined by the gauge $\Norm{(x,y,t)}_\infty=\max\{\norm{\vec x}_\infty, \norm{\vec y}_\infty,  \frac{1}{2n} \norm{t}^{1/2}\}$. Both $d$ and $d_\infty$ are bi-Lipschitz equivalent to the Carnot (sub-Riemannian) metric, see \S \ref{sec:Carnot}.

Let $\Carnot^n(\Z)$ be the \emph{integer subgroup} of points in $\Carnot^n$ with integer coordinates, and likewise $\Carnot^n(\Q)$ the \emph{rational subgroup}. It is easy to see that every point of $\Carnot^n(\Q)$ is of the form $\delta_q^{-1}p$ for $q\in\N$ and $p\in \Carnot^n(\Z)$. If $q$ is minimal, then we set $\height_{\Carnot^n}(\delta_q^{-1}p):=q$. This is the \emph{Carnot height} of $\delta_q^{-1}p$.  

For each $\alpha>0$, the \emph{$\alpha$-Diophantine problem in $\Carnot^n$} asks to approximate a point $h\in \Carnot^n$ by a rational $\delta_q^{-1}p$ such that
$$d(h, \delta_q^{-1}p) \leq q^{-\alpha}.$$
\end{defi}

\begin{defi}[Siegel model]
The \emph{Siegel model} $\Sieg^n$ of $\Heis^n$ is the set of points $(\vec u, v)\in \C^n \times \C$ such that $2\Re(v)=\norm{\vec u}^2$. The group law in $\Sieg^n$ is 
$$(\vec u, v)*(\vec u', v') = (\vec u + \vec u', v+v'+\overline{\vec u}+u').$$
The gauge metric in $\Sieg^n$ is defined by
$$d(h,h') = \Norm{h^{-1}*h} \quad \text{ and } \quad  \Norm{(\vec u,v)} = \norm{v}^{1/2}.$$

We take $\Sieg^n(\Z):= \Sieg^n \cap \Z[\ii]^{n+1}$ and $\Sieg^n(\Q):= \Sieg^n \cap \Q[\ii]^{n+1}$.  The elements of $\Sieg^n(\Q)$ can be written as $q^{-1} \vec p$ for $q\in \Z[\ii]$ and $p\in \Z[\ii]^{n+1}$ (note that we do not require $\vec p$ to be in $\Sieg^n(\Z)$). 
If $\norm{q}$ is minimal, we set the \emph{Siegel height} to $\height_{\Sieg^n}(q^{-1}\vec p):=\norm{q}$.

For each $\alpha>0$, the \emph{$\alpha$-Diophantine problem in $\Sieg^n$} asks to approximate a point $h\in \Sieg^n$ by a rational $q^{-1}\vec p\in \Sieg^n$ such that
$$d(h, q^{-1}\vec p) \leq \norm{q}^{-\alpha}.$$
\end{defi}

The two models $\Carnot^n$ and $\Sieg^n$ are related by the isometry
\begin{align*}
f: &\Carnot^n \rightarrow \Sieg^n\\
&(\vec x, \vec y, t) \mapsto ((\vec x+\ii y)(1+\ii), t+\norm{\vec x}^2+\norm{\vec y}^2).
\end{align*}
One has $f(\Carnot^n(\Q))=\Sieg^n(\Q)$, so we can speak more abstractly of rational points $\Heis^n(\Q)$ in the Heisenberg group and of the two heights 
$\height_{\Carnot^n}$ and $\height_{\Sieg^n}$ of a rational point. No clear relationship is known between the two heights.

\begin{remark}The integer correspondence $f(\Carnot^1(\Z))=\Sieg^1(\Z)$ holds for in the first Heisenberg group, so we may speak only of $\Heis^1(\Z)$. However, we have to specify the model to speak of integer points in higher dimensions.
\end{remark}

More generally, we phrase the Diophantine approximation question as:
\begin{question}[General Diophantine approximation]
Consider a metric space $X$ with distance $d$, measure $\mu$, and some notion of rational points $X(\Q)$ where for each $x\in X(\Q)$ there is also a notion of a height $\height(x)$ for the point. The classical Diophantine approximation question asks, for a given $x\in X\setminus X(\Q)$ and positive, real-valued function $f$, how many rational points $y\in X(\Q)$ satisfy
\[
\label{generalDiophantine}
d(x,y) \le f(\height(y)).
\]
\end{question}

Suppose that \eqref{generalDiophantine} has infinitely many solutions for $f(z)=z^{-\alpha_0}$ for $\mu$-almost every point of $X$, but for any $\epsilon>0$, \eqref{generalDiophantine} has infinitely many solutions for  $f(z)=z^{-(\alpha_0-\epsilon)}$ for $\mu$-almost no points. We then write $\alpha(X,\height)=\alpha_0$ and refer to $\alpha_0$ as the \emph{Diophantine exponent of $X$}. 

If the height is clear from context, we simply write $\alpha(X)$.
If such an $\alpha(X)$ is known, we say that a point $x\in X$ is \emph{badly approximable} (is in the set $\BA_X$) if there exists $C>0$ such that for all $y\in G(\Q)$ one has $d(x,y) > C\height(y)^{\alpha(X)}$. 

We will refer to a constant $\alpha$ as the irrationality exponent of an individual point $x\in X$ if \eqref{generalDiophantine} has infinitely many solutions for $f(z)=z^{-\alpha}$ but only finitely many solutions for $f(z)=z^{-\alpha-\epsilon}$ for any $\epsilon>0$. The irrationality exponent of a point $x\in X$ can be larger or smaller than the Diophantine  exponent for $X$. Additionally, a subset $Y\subset X$ with measure $\mu'$ has Diophantine exponent $\alpha(Y,X)$ if $\mu'$-almost every point of $Y$ has irrationality exponent equal to $\alpha(Y,X)$.

\subsection{Siegel model and Diophantine approximation}

Theorem \ref{thm:CarnotDiophantine} provided the Diophantine exponent for all Carnot groups, and in particular for the Heisenberg group $\Heis^n$ with respect to the Carnot height $\height_{\Carnot^n}$. Adjusting the methods to count rational points in the space $\Sieg^n \subset \C^{n+1}$, we obtain results about rational approximation of points in $\Heis^n$ with respect to the Siegel height $\height_{\Sieg^n}$:

\begin{thm}[Siegel Diophantine Exponent, cf.\ \cite{MR1919402} and \S \ref{sec:otherapproaches}]
\label{thm:SiegelIntro}
For $\alpha=1$, and almost every $h\in \Heis^n$, the Siegel Diophantine equation
\begin{equation}
\label{eq:SiegelDiophantine}
d(h, (\vec p/q, r/q) ) < \norm{q}^{-\alpha}
\end{equation}
has infinitely many solutions $(\vec p/q, r/q)\in \Sieg^n(\Q)$. On the other hand, if $\alpha > 1$ then the set of points $h$ admitting infinitely many solutions to \eqref{eq:SiegelDiophantine} has measure zero.
\end{thm}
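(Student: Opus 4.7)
The theorem splits into an upper bound (for $\alpha > 1$ the limsup of approximation events is null) and a lower bound (for $\alpha = 1$ it has full measure), and both halves rest on counting Siegel rational points of bounded height, in the spirit of Theorem~\ref{thm:CarnotDiophantine}.

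I would first prove a counting lemma: the number of rationals $q^{-1}\vec p \in \Sieg^n$ with $\norm{q} \in [Q, 2Q)$ lying in a fixed unit gauge ball is $\asymp Q^{2n+2}$. The annulus in $\Z[\ii]$ contributes $\asymp Q^2$ values of $q$; for each primitive $q$ the first $n$ coordinates $\vec p'\in \Z[\ii]^n$ range over $\asymp Q^{2n}$ lattice points with $\norm{\vec p'} \lesssim Q$, and for each valid $\vec p'$ the Siegel constraint $2\Re(p_{n+1}\bar q) = \norm{\vec p'}^2$ confines $p_{n+1}$ to a one-dimensional arithmetic progression in $\Z[\ii]$ with spacing $\asymp \norm{q}$, contributing $O(1)$ points in the unit ball once the congruence $2\gcd(\Re q, \Im q) \mid \norm{\vec p'}^2$ is met (which happens for a positive proportion of $\vec p'$).

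Granted the lemma, the upper bound is an immediate Borel--Cantelli. For $\alpha > 1$ the event $A_Q = \{h \mst \exists q^{-1}\vec p,\ \norm{q}\in[Q,2Q),\ d(h, q^{-1}\vec p) < \norm{q}^{-\alpha}\}$ is a union of $\asymp Q^{2n+2}$ gauge balls each of radius $\asymp Q^{-\alpha}$ and Haar measure $\asymp Q^{-(2n+2)\alpha}$ (using that $\Heis^n$ has Hausdorff dimension $2n+2$), so $\mu(A_Q) \lesssim Q^{(2n+2)(1-\alpha)}$ is dyadically summable and $\mu(\limsup_k A_{2^k}) = 0$.

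For the lower bound at $\alpha = 1$, the height gap $d(q^{-1}\vec p, (q')^{-1}\vec p') \gtrsim (\norm{q}\norm{q'})^{-1/2}$ between distinct Siegel rationals makes the balls of radius $\epsilon\norm{q}^{-1}$ pairwise disjoint, yielding $\mu(A_Q) \geq c > 0$ uniformly so $\sum_k \mu(A_{2^k}) = \infty$. To upgrade divergence to $\mu(\limsup) > 0$, I would prove a quasi-independence bound $\mu(A_i \cap A_j) \lesssim \mu(A_i)\mu(A_j)$ for $i \ll j$, which should follow from equidistribution of height-$2^j$ rationals on scales far below $2^{-i}$, and then invoke Kochen--Stone; invariance of $\limsup A_{2^k}$ under translation by the dense subgroup $\Heis^n(\Q)$ (equivalently, ergodicity of the integer Heisenberg translations on the compact nilmanifold $\Heis^n/\Heis^n(\Z)$) then promotes this to full measure via a zero-one law. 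The main obstacle is the quasi-independence estimate, which requires a finer, second-moment control on Siegel lattice points than the first-moment count used for the upper bound.
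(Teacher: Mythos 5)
Your plan reproduces the paper's high-level architecture fairly closely: a first-moment count of Siegel rationals, Borel--Cantelli for the upper bound, a quasi-independence second moment fed into a divergence Borel--Cantelli (you invoke Kochen--Stone; the paper uses Lemma~2.3 of Harman's book, which plays the same role), and a zero-one law to upgrade positive measure to full measure. Your dyadic grouping of denominators $\norm{q}\in[2^k,2^{k+1})$ is a cosmetic repackaging of the paper's sum over individual Gaussian $q$ restricted to the first quadrant with $\gcd(\Re q,\Im q)=1$ and $\tilde r/q$ in lowest terms; the Euler totient $\phi(q)$ enters only to keep track of these primitivity conditions, and after averaging over the annulus this produces the $\asymp Q^{2n+2}$ you assert.

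The genuine gap is exactly the one you flag and then defer. The quasi-independence bound $\mu(A_i\cap A_j)\lesssim\mu(A_i)\mu(A_j)$ is the entire content of the lower bound, and ``equidistribution of height-$2^j$ rationals at scales below $2^{-i}$'' is not something you would prove independently --- it is equivalent in strength to what you need. The paper's route is elementary and concrete: an overlap of balls $B_{C/\norm{q}}(r/q,p/q)$ and $B_{C/\norm{q'}}(r'/q',p'/q')$ forces the linear Diophantine inequality $0<\norm{q'r-qr'}\le M\max\{\norm{q/q'},\norm{q'/q}\}$, and a counting lemma for $\norm{qx-Qy}\le A$ over Gaussian integers in a box (Lemma~\ref{lem:linearformbound}, controlled via a Gauss-circle estimate and a gcd analysis) shows there are $O(\max\{\norm{q/q'}^2,\norm{q'/q}^2\})$ overlapping pairs of centers, each contributing volume $O(\min\{\norm{q}^{-2n-2},\norm{q'}^{-2n-2}\})$, yielding $\norm{\mathcal E_q\cap\mathcal E_{q'}}\ll\norm{qq'}^{-2}$. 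Without that (or an equivalent), the argument does not close.

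A smaller but real issue is the zero-one law. The set of $h$ with infinitely many solutions to \eqref{eq:SiegelDiophantine} with a \emph{fixed} constant is not literally invariant under left translation by $\Heis^n(\Q)$: translating by $(r'/q',p'/q')$ converts a denominator $q$ into $q\norm{q'}^2$ and blows up the implicit constant by $\norm{q'}^{2\alpha}$ (Lemma~\ref{lemma:SiegelRationalInvariance}). Only the union over all constants is translation-invariant, and the paper recovers the statement for a fixed constant via a separate Lebesgue-density argument (Lemma~\ref{lem:SiegZO}), not nilmanifold ergodicity. Your sketch also needs to handle primitivity when declaring the centers ``distinct'': the separation $\gtrsim(\norm{q}\norm{q'})^{-1/2}$ holds only for genuinely distinct rational points, so either count lowest-terms representatives (as the paper does, via $\phi$) or live with coincident balls and argue that they only reduce the measure you are bounding from below.
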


We next consider three sets $\BA_{\Sieg^1}, \BD, \D(\Gamma)\subset \Heis^1$ in the first Heisenberg group. The set $\BA_{\Sieg^1}$ is the set of badly approximable points with respect to the Siegel height (see \S \ref{sec:CarnotSiegelDiophantus}). The set $\BD$ is the set of points in $\Heis^1$ whose continued fraction expansion (defined by the authors in \cite{LV}, see \S \ref{sec:CFalpha}) is given by digits of bounded norm. The set $\D(\Gamma)$ for $\Gamma=U(2,1;\Z[\ii])$ is the set of endpoints of geodesics in complex hyperbolic space $\Hyp^2_\C$ whose image in the modular space $\Gamma \backslash \Hyp^2_\C$ is contained in some compact space. It is a classical fact that the analogous subsets of $\R$ coincide. We show:

\begin{thm}[Trichotomy for $\Heis^1$]
\label{thm:trichotomy}
Let $h\in \Heis^1$. Then the following are equivalent:
\begin{enumerate}
\item $h\in \BA_{\Sieg^1}$: it is badly approximable with respect to the Siegel height $\height_{\Sieg^1}$,
\item $h\in \BD$: its continued fraction digits are bounded in norm,
\item $h \in \D(\Gamma)$: it is the endpoint of a complex hyperbolic geodesic that is contained in some compact subset of $\Gamma \backslash \Hyp^2_\C$.
\end{enumerate}
\end{thm}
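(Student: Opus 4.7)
The plan is to prove the three-way equivalence by establishing $(1) \Leftrightarrow (2)$ via a best-approximation analysis for Heisenberg continued fractions, and $(2) \Leftrightarrow (3)$ via a Dani-style correspondence between the continued fraction algorithm and the geodesic flow on $\Gamma \backslash \Hyp^2_\C$.

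First I would analyze the Heisenberg continued fractions of \cite{LV}. Each digit $a_i \in \Heis^1(\Z)$ produces a convergent $h_n := p_n/q_n$ which is rational in $\Sieg^1$. The key quantitative facts I would establish are: (a) the Siegel height grows multiplicatively in the digits, $\height_{\Sieg^1}(h_n) \asymp \prod_{i \le n} \Norm{a_i}$, and (b) the approximation error satisfies $d(h, h_n) \asymp \height_{\Sieg^1}(h_n)^{-1} \Norm{a_{n+1}}^{-1}$. I would then establish that the convergents are best approximations with respect to the Siegel height: any rational $q^{-1}\vec p \in \Sieg^1(\Q)$ with $\norm{q} \le \height_{\Sieg^1}(h_n)$ satisfies $d(h, q^{-1}\vec p) \gtrsim d(h, h_n)$, up to a uniform constant. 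Combining (a), (b), and the best-approximation property, $h \in \BA_{\Sieg^1}$ is equivalent to a uniform lower bound on the quantity $d(h,h_n) \cdot \height_{\Sieg^1}(h_n)$, which is equivalent to $\sup_n \Norm{a_n} < \infty$, that is, $h \in \BD$.

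For $(2) \Leftrightarrow (3)$, I would set up the Dani correspondence. The group $\Gamma = U(2,1;\Z[\ii])$ acts on $\Hyp^2_\C$ with ideal boundary $\partial \Hyp^2_\C = \Sieg^1 \cup \{\infty\}$, and the parabolic fixed points of $\Gamma$ are exactly $\Sieg^1(\Q) \cup \{\infty\}$. Lifting $h$ to the endpoint at infinity of a vertical geodesic $\gamma_h$, the first-return map of the geodesic flow to a fixed horoball neighborhood of the cusp at $\infty$ is conjugate to the shift map on continued fraction digits. Under this identification, the Euclidean height of the maximal cusp excursion between the $n$-th and $(n{+}1)$-st returns is comparable to $\log \Norm{a_{n+1}}$. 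By Mahler-type compactness in $\Gamma \backslash \Hyp^2_\C$, the projected geodesic $\gamma_h$ is contained in a compact set if and only if all cusp excursions are uniformly bounded, which translates precisely to $\sup_n \Norm{a_n} < \infty$.

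The hard part will be proving the best-approximation property in step $(1) \Leftrightarrow (2)$. Unlike the real case, the cusp stabilizer is a non-abelian Heisenberg lattice, so Farey-style unimodular arguments do not directly apply; one must exploit the $SU(2,1)$ structure together with the twisted product form of $\Sieg^1$ to rule out rationals of small Siegel height that beat the convergents. The Dani correspondence in $(2) \Leftrightarrow (3)$ should then follow from standard horoball cross-section techniques once the action of the Gauss-type map of \cite{LV} on $\Sieg^1$ is identified with the induced first-return dynamics.
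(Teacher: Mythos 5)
Your decomposition $(1)\Leftrightarrow(2)$ and $(2)\Leftrightarrow(3)$ is sound, and the first half tracks the paper closely.  For $(1)\Leftrightarrow(2)$, the paper uses the same three ingredients you propose: a quantitative error estimate $d(h,(r_n/q_n,p_n/q_n))\asymp |v_{n+1}|^{1/2}/|q_n|$, the multiplicative growth of the denominator via $|q_{n-1}|\asymp|v_{n-1}q_n|$, and a best-approximation proposition (the paper's Proposition~\ref{prop:bestapproximation}, cited from prior work rather than reproved).  Since $\|\gamma_{n+1}\|\asymp|v_n|^{-1/2}$, bounded digits is the same as $|v_n|$ bounded below, which is the paper's formulation.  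One small normalization slip in your item~(a): the Siegel height grows as $|q_n|\asymp\prod_i\|a_i\|^{2}$, not $\prod_i\|a_i\|$; this is harmless for the boundedness characterization but the exponent matters if you try to use (a) for finer statements.  You are right that the best-approximation property is the technical heart — the paper simply states it as a black box.

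For the third equivalence, you propose a genuinely different route from the paper.  The paper proves $(1)\Leftrightarrow(3)$ directly, not $(2)\Leftrightarrow(3)$, by a horoball-shadow argument: it computes (Lemma~\ref{lemma:horoheights}) that the $\Gamma$-orbit horoball based at a reduced rational $(r/q,p/q)$ has horoheight $s_0\|q\|^{-1}$, shows (Lemma~\ref{lemma:horopositions} and the preceding lemma) that such a horoball has a shadow comparable to a ball of radius $s_0\|q\|^{-1}$, and then observes that a vertical geodesic ending at $h$ avoids the $\Gamma$-orbit of horoballs if and only if $h$ avoids all shadows, i.e.\ $d(h,(r/q,p/q))\gg\|q\|^{-1}$, i.e.\ $h\in\BA_{\Sieg^1}$.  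Your Dani-style first-return cross-section argument is a legitimate alternative and would give $(2)\Leftrightarrow(3)$, but it carries a real additional burden: you must actually establish the conjugacy between the Gauss-type map of~\cite{LV} and the first-return map of the geodesic flow, which is a nontrivial setup step and is not done anywhere in the paper.  The paper sidesteps it — interestingly, its proof of the horoheight formula in Lemma~\ref{lemma:horoheights} \emph{does} iterate the Gauss map and the inversion $\iota$ along the finite continued fraction expansion of the rational basepoint, so the continued-fraction-to-geodesic dictionary is present implicitly, but only for finite expansions of rationals, which is much lighter than a full cross-section correspondence.  In short: your approach is correct in outline and more conceptual (it would immediately give the logarithm law, cusp excursion statistics, etc.), but it costs you the construction of the cross section; the paper's approach is more elementary and only needs the static geometry of the horoball packing.
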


\subsection{Carnot vs Siegel}
We have so far provided Diophantine exponents $\alpha$ for approximation in the Heisenberg group $\Heis^n$ with respect to each the Carnot height $\height_{\Carnot^n}$ and the Siegel height $\height_{\Sieg^n}$. We have $\alpha(\Heis^n, \height_{\Carnot^n})=(2n+3)/(2n+2)$, while $\alpha(\Heis^n, \height_{\Sieg^n})=1$. In this sense, it is easier to solve the Carnot Diophantine problem. On the other hand, for $\Heis^1$, at least, we are able to find explicit solutions to the Siegel Diophantine problem using Heisenberg continued fractions.

We conclude by highlighting a qualitiative difference between Carnot Diophantine approximation and Siegel Diophantine approximation, which arises when one compares the badly approximable sets $\BA_{\Carnot^n}$ and $\BA_{\Sieg^n}$. 

\begin{thm}[Geometry of badly approximable points]
\label{thm:GeometryIntro}
The sets $\BA_{\Carnot^n}$ and $\BA_{\Sieg^n}$ are both non-empty and dense in $\Heis^n$. Furthermore, given any closed Ahlfors $\delta$-regular set $A\subset \Heis^n$, the set $\BA_{\Sieg^n}\cap A$ is Schmidt-winning in $A$, and has full Hausdorff dimension $\delta$ (which is also the Hausdorff dimension of $A$). On the other hand, $\BA_{\Carnot^n}$ avoids infinitely many lines through any rational point. In particular, $\BA_{\Carnot^n} \neq \BA_{\Sieg^n}$.
\end{thm}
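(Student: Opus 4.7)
The statement has three parts; I treat them in order. \emph{Density and non-emptiness} will follow from Schmidt-game arguments: for $\BA_{\Sieg^n}$ directly from the second assertion (taking $A=\Heis^n$), and for $\BA_{\Carnot^n}$ by running an analogous game using $\height_{\Carnot^n}$ and the counting asymptotic of Theorem \ref{thm:IntroApproximates}.

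\emph{Schmidt-winning in $A$.} I play a Schmidt $(\alpha,\beta)$-game on the closed Ahlfors $\delta$-regular set $A$. After round $k$, Bob's ball $B_{2k}\subset A$ has radius $r_k\asymp(\alpha\beta)^k$; Alice's goal is to exclude the forbidden zone $\bigcup_{q^{-1}\vec p\in\Sieg^n(\Q)}B(q^{-1}\vec p, c\norm{q}^{-1})$ for a suitable $c>0$. She handles rationals in stages: at round $k$ she kills all $q^{-1}\vec p$ with $\norm{q}\in[N_k, CN_k]$ where $N_k\asymp 1/r_k$, so that the relevant forbidden balls have radius $\asymp r_k$. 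The key counting input, reflecting $\alpha(\Heis^n,\height_{\Sieg^n})=1$, is that the number of such Siegel rationals lying in $B_{2k}$ is bounded by a constant $M=M(C)$ independent of $k$; this uses that $\Sieg^n$ is the real $(2n+1)$-dimensional hypersurface $\{2\Re v=\norm{\vec u}^2\}$ in $\C^{n+1}$ together with a lattice-point count on the affine constraint $2\Re(r\bar q)=\norm{\vec p}^2$. Ahlfors $\delta$-regularity of $A$ then guarantees that for $\alpha$ small (depending only on $M,\delta$ and the regularity constants), Alice can pick a radius-$\alpha r_k$ sub-ball of $B_{2k}$ centered in $A$ and disjoint from these $M$ forbidden balls. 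Standard consequences of Schmidt winning yield density of $\BA_{\Sieg^n}\cap A$ in $A$ and full Hausdorff dimension $\delta$.

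\emph{Carnot avoids lines.} Fix $p\in\Heis^n(\Q)$ and a nonzero rational horizontal vector $w=(\vec a,\vec b,0)\in\Q^{2n}\times\{0\}$. The curve $\ell_{p,w}=\{p*(t\vec a, t\vec b, 0):t\in\R\}$ is a left-translate of a horizontal line through the origin, and the Carnot metric restricted to it is bi-Lipschitz to the Euclidean metric on $\R$. By Dirichlet's theorem every $t\in\R$ admits infinitely many fractions $a/q$ with $\norm{t-a/q}<1/q^2$. The corresponding Heisenberg rationals $p*((a/q)\vec a,(a/q)\vec b,0)$ lie at Carnot distance $O(q^{-2})$ from $p*(t\vec a,t\vec b,0)$ and have Carnot height $O(q)$ (the implicit constant depending on $p$ and $w$). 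Since $\alpha(\Heis^n,\height_{\Carnot^n})=(2n+3)/(2n+2)<2$, these approximations beat the Carnot Diophantine threshold for all large $q$, so no point of $\ell_{p,w}$ lies in $\BA_{\Carnot^n}$. Distinct rational directions modulo scaling give distinct lines through $p$, so there are infinitely many such lines. Each $\ell_{p,w}$ is closed and Ahlfors $1$-regular in $\Heis^n$, so by the Schmidt-winning step $\BA_{\Sieg^n}$ is actually dense (and full-dimensional) in it; this forces $\BA_{\Sieg^n}\ne\BA_{\Carnot^n}$.

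\textbf{Main obstacle.} The technical heart is the Schmidt-winning step: establishing the uniform-in-scale counting bound $M=O_C(1)$ for Siegel rationals in small Heisenberg balls (requiring a careful use of the hypersurface geometry of $\Sieg^n$ and the affine lattice condition on $r$), and then tuning Alice's contraction $\alpha$ against $\delta$ and the Ahlfors regularity constants of $A$ so that her move can always be realized \emph{inside} the possibly curved set $A$, not merely in ambient $\Heis^n$.
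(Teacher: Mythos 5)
Your overall plan matches the paper's: Schmidt-winning on Ahlfors-regular sets for $\BA_{\Sieg^n}$, and a Dirichlet-along-horizontal-lines argument for the Carnot part. But two of your technical steps diverge from the paper in ways worth flagging.

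\textbf{The counting step in the Schmidt game.} You propose to bound the number $M$ of forbidden Siegel rationals at each scale by a lattice-point count on the hypersurface $\{2\Re v = \norm{\vec u}^2\}$ together with the affine constraint on numerators. This is the hard way around, and you rightly flag it as the ``technical heart.'' The paper's Lemma \ref{lemma:SchmidtUniqueness} does something much cleaner: for two distinct Siegel rationals $(\vec r/q,p/q)$ and $(\vec r'/q',p'/q')$, a direct computation of the gauge norm of $(\vec r/q,p/q)^{-1}*(\vec r'/q',p'/q')$ shows the distance is
\[
\frac{\norm{\overline{p}q'+p'\overline{q}-\overline{\vec r}\cdot{\vec r'}}^{1/2}}{\norm{\overline{q}q'}^{1/2}} \ge \norm{qq'}^{-1/2},
\]
since the numerator is a nonzero Gaussian integer. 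Choosing $R = (\alpha\beta)^{-1}$ and handling denominators with $R^{i-1}\le\norm{q}<R^i$ at Black's $i$th move, this separation bound forces \emph{at most one} such rational near $B_i$ (once $2r_1+2\epsilon < R^{-1}$), so $M=1$. You should realize the separation computation is the key identity; no hypersurface counting is needed, and it is exactly the exponent $1$ in $\height_{\Sieg^n}$ that makes this uniqueness work.

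\textbf{Non-emptiness and density of $\BA_{\Carnot^n}$.} Your proposal to obtain these ``by running an analogous game using $\height_{\Carnot^n}$'' is not substantiated and is genuinely problematic. With Carnot exponent $\alpha(\Carnot^n)=(Q+1)/Q>1$, the forbidden ball around a rational of height $q$ has radius $\epsilon q^{-(Q+1)/Q}$, while two rationals of distinct heights $q,q'\approx N$ are only guaranteed to be $\asymp N^{-2}$ apart; since $N^{-2}\ll N^{-(Q+1)/Q}$, a forbidden ball can capture many rationals at the relevant scale, and the Lemma \ref{lemma:SchmidtUniqueness}--style uniqueness fails. (This is exactly why Schmidt's proof for $\BA_{\R^n}$ needs a simplex lemma and is not a naive avoidance game.) The paper does not use a Schmidt game for $\BA_{\Carnot^n}$; the intended source of non-emptiness/density is Theorem \ref{thm:CarnotWeirdness}, which produces a linear subset (the $t$-axis) with Diophantine exponent strictly below $\alpha(\Carnot^n)$, whence a.e.\ point of that subset is $\height_{\Carnot^n}$-badly approximable, and rational-translation invariance gives density. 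Your proposal is missing this entirely.

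\textbf{The line-avoiding part.} Your argument --- Dirichlet along a rational horizontal line through a rational point gives approximations of Carnot height $O(q)$ at distance $O(q^{-2})$, beating $(Q+1)/Q<2$ --- is correct and is essentially the content of the paper's claim that the $x$-axis has exponent $2$ (via an isometric, height-preserving embedding of $\R$), extended to all rational horizontal directions by rational-translation and rotational invariance of $\BA_{\Carnot^n}$.
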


\begin{remark}
In the case $n=1$ and $A=\Heis^1$, the fact that $\BA_{\Sieg^1}$ is winning in $\Heis^1$ follows from McMullen \cite{MR2720230} via Theorem \ref{thm:trichotomy}.
\end{remark}

\subsection{Other approaches}
\label{sec:otherapproaches}
In this paper, we approach Diophantine approximation on Carnot groups and in particular $\Heis^n$ from an intrinsic perspective, and employ direct number-theoretic arguments to establish our results. 

There is a large existing literature on Diophantine approximation on Riemannian manifolds (e.g.\ \cite{MR3357181, MR1844559, MR2586348, MR1863852,MR1919402, MR1652916, MR1719827}).  We mention three other approaches to Diophantine approximation on Carnot groups.

Hersonsky--Paulin approach Diophantine approximation on $\Heis^n$ from the point of view of complex hyperbolic geometry. Indeed, modulo Remark \ref{remark:HP}, in  \cite{MR1919402} they derive Theorem \ref{thm:SiegelIntro} (which states that $\alpha(\Sieg^n)=1$) from their study  \cite{MR1844559, MR1863852} of geodesics in cusped complex-hyperbolic manifolds, and are able to find Siegel Diophantine approximates for  \emph{every} point of $\Heis^n$ for $\alpha\leq 1$, which we are only able to do for $\Heis^1$. Our more direct approach is applicable to both the Carnot and Siegel models, and can be adapted to counting approximates (Theorem \ref{thm:IntroApproximates}) or prescribing denominators (Question \ref{question:denominators}). We intend to further explore the connection to complex hyperbolic geometry in an upcoming study.

\begin{remark}
\label{remark:HP}
It should be noted that \cite{MR1919402} is somewhat inconsistent about its definition of the height used for Diophantine approximation: while the complex-hyperbolic proofs employ the Siegel height $\height_{\Sieg^n}$, its definition is incorrectly stated using Carnot coordinates. 
\end{remark}

Another approach to Diophantine approximation on the Heisenberg group comes from the standard representation of the Heisenberg group $\Heis^1$ as the group of unipotent upper-triangular matrices in $SL(3,\R)$ (and analogously for all $\Heis^n$; see \cite{LV}). One can use this embedding to define rational points and a height function, but we are unable to say anything about its Diophantine properties.

A rather different approach is taken by Aka, Breuillard, Rosenzweig, and de Saxc{\'e} \cite{MR3357181,MR2586348}, who discuss a notion of \emph{Diophantine subgroups} of nilpotent groups $G$ (including Carnot groups). Given a finite set $S\subset G$, a number $N>0$, and a sequence $s_1, \ldots, s_N\in S\cup S^{-1}\cup \{id\}$, they ask how close  a non-identity product $s_1*s_2*\cdots*s_N$ can be to $id$, in terms of $N$. They define the group generated by $S$ to be Diophantine if it satisfies a certain condition on such distances. We are not aware of a connection between Diophantine subgroups and our notions of Diophantine approximation. In particular, iterating an element $g$ of a non-abelian Carnot group $G$ is not the same as dilating it: for a generic $g\in G$, $\delta_2(g) \neq g*g$.

\subsection{Questions}
 In this paper, we work with the Picard modular group $\Gamma=U(2,1;\Z[\ii])$ as the standard lattice in $\Hyp^2_\C$, and the group $\Sieg^1(\Z)$ as the standard lattice in the Heisenberg group. Many papers have studied Diophantine approximation by rationals of fields $\Q(\sqrt{-d})$ for positive, nonsquare integers $d$ \cite{Nakada,NW,Asmus3,Vulakh3s}.  While parts of our theory generalize when one replaces $\Z[\ii]$ above by $\Z[\sqrt{-d}]$, some parts require $\Z[\sqrt{-d}]$ to be a Euclidean domain, and others further require that there exists a fundamental domain for $\Sieg^n(\Z)$ that fits properly inside the unit ball in $\Sieg^n$. It would be interesting to see how the results change for other choices of lattice in $\Hyp^n_\C$ and $\Sieg^n$ and $\Carnot^n$, or for quaternionic hyperbolic space. In particular, we ask:
 
\begin{question}
What is the  Diophantine exponent of the boundary of quaternionic hyperbolic space?
\end{question}

It would be interesting to have more information on the set of badly approximable points in spaces $X$, including the Carnot groups and $\Sieg^n$ studied here. It appears to be an extremely difficult problem to identify all subsets $Y\subset X$ such that $\alpha(Y,X)\neq \alpha(X)$. A more tractable problem may be:
\begin{question}
Is the set of badly approximable points in $\Sieg^n$, $\Carnot^n$, or  $\C^n$ ($n\geq 2$) path-connected? Note that $\BA_{\R^n}$ is not path-connected since there exist rational hyperplanes in $\R^n$ which contain no badly-approximable points. 
\end{question}

In Theorem \ref{thm:IntroApproximates}, we provide a precise asymptotic for Diophantine approximation in Carnot groups. Our proof of the Diophantine exponent for the Siegel model of the Heisenberg group follows an analogous argument. One expects to be able to strengthen it.
\begin{question}
\label{q1}
Can one provide a precise asymptotic for Diophantine approximation in the Heisenberg group with respect to the Siegel height $\height_{\Sieg^n}$, analogously to Theorem \ref{thm:IntroApproximates}?
\end{question}

Lastly, it should be possible to adapt the proofs of Theorem \ref{thm:CarnotDiophantine} and Theorem  \ref{thm:SiegelIntro} to constrained choices of rationals. 
\begin{question}
\label{q2}
\label{question:denominators}
Does every point of $\Sieg^1$ admit infinitely many rational approximates with Gaussian prime denominators?
\end{question}

See \S \ref{sec:moredetailsonquestions} for more details on Questions \ref{q1} and \ref{q2}.

\subsection{Structure of the paper}

We prove Theorems \ref{thm:CarnotDiophantine} and \ref{thm:IntroApproximates} concerning Diophantine approximation in  Carnot groups in \S \ref{sec:Carnot}. We then focus on the first Heisenberg group in its Siegel model in \S \ref{sec:MT1},  using Heisenberg continued fractions to prove the trichotomy Theorem \ref{thm:trichotomy} and the case $n=1$ of the Siegel Diophantine approximation Theorem \ref{thm:SiegelIntro}. We prove the general case of Theorem \ref{thm:SiegelIntro} in \S \ref{sec:Siegeln}. We discuss badly approximable points and Schmidt games, proving Theorem \ref{thm:GeometryIntro}, in \S \ref{sec:badlyapprox}.

\subsection{Asymptotic notations}

We will make frequent use of asymptotic notations throughout this paper. By $f(x)=O(g(x))$ or $f(x) \ll g(x)$ we mean that there exists a constant $C$ (called the implicit constant) such that $|f(x)|\le C|g(x)|$ for sufficiently large $x$. By $f(x) \asymp g(x)$, we mean that $f(x) \ll g(x)$ and $g(x) \ll f(x)$. By $f(x) = o(g(x))$, we mean that $\lim_{x\to \infty} f(x)/g(x)= 0$. By $f(x) \sim g(x)$, we mean that $f(x) = g(x)(1+o(1))$ or, equivalently, that $\lim_{x\to \infty} f(x)/g(x) = 1$.

\section{Diophantine approximation in Carnot groups}
\label{sec:Carnot}
The goal of this section is to prove Theorem \ref{thm:CarnotDiophantine}, providing the Diophantine exponent for rational Carnot groups $G$ and in particular the Carnot model of the Heisenberg group.

\subsection{Carnot groups}
\label{sec:CarnotDefi}
A connected and simply connected Lie group $G$ with Lie algebra $\mathfrak g$ is a \emph{Carnot group} if $\mathfrak g$ admits a splitting $\mathfrak g = \mathfrak g_1 \oplus \cdots \oplus \mathfrak g_s$ satisfying $[\mathfrak g_1, \mathfrak g_i]=\mathfrak g_{i+1}$, with $\mathfrak g_{s+1}:=\{0\}$. Both $\R^n$ and the Heisenberg group $\Heis^n$ are examples of Carnot groups, and the general theory is largely analogous.  We recall some structure theory for Carnot groups; see \cite{MR3267520, MR1421823, MR0369608} for details.

The exponential mapping $\exp: \mathfrak g \rightarrow G$ is a bijection for nilpotent groups. Taking $n_i = \dim \mathfrak g_i$, we can give $G$ natural coordinates in $\R^{n_1}\times \ldots \times \R^{n_s}$. By the Baker-Campbell-Hausdorff formula, the group law on $G$ is then of the form
\begin{equation}
\label{eq:BCH}
(x_1, \ldots, x_s)*(x'_1, \ldots, x'_s)=(x_1+x_1', x_2+x_2'+p_2, \ldots, x_s+x_s'+p_s),
\end{equation}
where $x_i \in \R^{n_i}$ and $p_i$ is a vector of polynomials in the coordinates of $x_1, \ldots, x_{i-1}$ and $x_1', \ldots, x_{i-1}'$. 

One says that $G$ is \emph{rational} if one can choose the above coordinates such that the polynomials $p_i$ have integer coefficients. In this case the subset $G(\Z)$ having integer coordinates forms a co-compact lattice. Indeed, by a result of Malcev it is exactly the rational Carnot groups that admit co-compact lattices.

Specifying an inner product on $\mathfrak{g}_1$ leads to a Carnot path metric on $G$, which is in general difficult to compute. We will instead work with bi-Lipschitz equivalent metrics: the gauge metric \ref{defi:CarnotHeisenberg} on $\Heis^n$ and weighted infinity metrics on abstract Carnot groups.

Given positive weights $\Lambda = \{\lambda_1, \ldots, \lambda_s\}$, the \emph{$\Lambda$-weighted infinity (pseudo-) metric} on $G$ is defined by
\(
&d_{\infty,\Lambda}(g,g')=\Norm{g^{-1}*g'}_{\infty, \Lambda},
&\Norm{(x_1, \ldots, x_s)}_{\infty,\Lambda} = \max_{\stackrel{i=1,\ldots, s}{x\in x_i}}\norm{\lambda_i x}^{1/i}.
\)
For an appropriate choice of $\Lambda$, $d_{\infty,\Lambda}$ is in fact a metric (in particular, it satisfies the triangle inequality; see \cite{MR3267520,MR0369608}). We will assume such a choice of $\Lambda$ has been made and omit the subscript. Note that we may always assume $\lambda_1=1$, and that for $\Heis^n$ one may take $\lambda_2=\frac{1}{2n}$.

Setting $n=\sum_i n_i$ and $Q=\sum_i i\cdot n_i$, the space $(G,d_{\infty,\Lambda})$ has topological dimension $n$ but Hausdorff dimension $Q$. Furthermore, Lebesgue measure (which we work with) corresponds up to rescaling to both  the Hausdorff $Q$-measure and Haar measure.

For each $r>0$, the mapping $\delta_r(x_1, x_2, \ldots, x_s) = (rx_1, r^2x_2, \ldots, r^s x_s)$ dilates $d_\infty$ (as well as the gauge and Carnot metrics) by a factor of $r$ and satisfies $\delta_r (g* g')=\delta_r g * \delta_r g'$ for all $g,g'\in G$ (it is easy to see that $\delta_r$ commutes with the Lie bracket and the exponential map). It dilates volume by  $r^Q$.

The unit ball around the origin has volume $B_\Lambda:=2^n \prod_{i=1}^s {\lambda_i}^{-n_i}$ and form 
\[
[-\lambda_1^{-1},\lambda_1^{-1}]^{n_1}\times [-\lambda_2^{-1}, \lambda_2^{-1}]^{n_2} \times \dots \times [-\lambda_s^{-1}, \lambda_s^{-1}]^{n_s}.
\]
More generally, a ball of radius $r$ has measure $ B_\Lambda\cdot r^Q$.

\subsection{Upper bound for $\alpha(G)$: a Borel-Cantelli argument}
\label{sec:CarnotBorelCantelli}
\begin{thm}[Carnot Diophantine exponent upper bound]
\label{thm:GeometricUpperBound} 
Let $G$ be a rational Carnot group of Hausdorff dimension $Q$ equipped with a Carnot metric $d$, and let $\epsilon,C>0$. Then the set $ E$  of points $g\in G$ admitting infintely many solutions to the Diophantine equation
\begin{equation}
\label{eq:upperepsilon}
d(g, \delta_q^{-1}p)<Cq^{-\frac{Q+1+\epsilon}{Q}},  \text{ for }q\in \mathbb{N},\  p\in G(\Z),
\end{equation}
has measure zero.
\end{thm}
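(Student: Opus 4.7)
The approach is a standard Borel--Cantelli convergence argument, adapted to the Carnot setting using the dilation structure and the lattice $G(\Z)$.

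First I would reduce the problem to a fixed compact region. Because the metric $d$ is left-invariant and $G(\Z)$ acts on $G$ by left-translation permuting the rational points $G(\Q) = \{\delta_q^{-1} p : q\in\N, p\in G(\Z)\}$, the set $E$ is $G(\Z)$-invariant. It therefore suffices to show that $E$ has measure zero intersected with a fundamental domain $F$ for $G(\Z)$, e.g.\ the unit cube in the coordinates of \eqref{eq:BCH}. Fix such an $F$, of bounded diameter $D$.

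Next I would estimate, for each $q\in\N$, the measure of the set
\(
E_q := \bigcup_{p \in G(\Z)} \bigl\{g \in F : d(g, \delta_q^{-1}p) < Cq^{-(Q+1+\epsilon)/Q}\bigr\}.
\)
Two ingredients are needed. First, the ball $B(\delta_q^{-1}p, Cq^{-(Q+1+\epsilon)/Q})$ has volume $B_\Lambda C^Q q^{-(Q+1+\epsilon)}$ with respect to Lebesgue/Haar measure, because dilation by $r$ multiplies volume by $r^Q$. Second, the number of rationals $\delta_q^{-1}p$ meeting $F$ (equivalently, the number of $p\in G(\Z)$ with $\delta_q^{-1}p$ in the $D$-neighborhood of $F$) is $O(q^Q)$: applying the dilation $\delta_q$, one is counting lattice points $p \in G(\Z)$ inside $\delta_q(F^{+D})$, a region of volume $O(q^Q)$, and a co-compact lattice has point counts comparable to volume for large regions. (For small $q$ the bound is trivially $O(q^Q)$ up to a constant.)

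Combining these gives $\mu(E_q) \ll q^Q \cdot q^{-(Q+1+\epsilon)} = q^{-1-\epsilon}$, so $\sum_{q\ge 1} \mu(E_q) < \infty$. The Borel--Cantelli lemma then yields $\mu(\limsup_q E_q) = 0$, and the set of $g \in F$ admitting infinitely many solutions to \eqref{eq:upperepsilon} is exactly $F \cap \limsup_q E_q$. Summing over the $G(\Z)$-translates of $F$ finishes the proof.

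The main technical point to be careful about is the lattice-count step: one needs that $|G(\Z) \cap \delta_q(F^{+D})| \ll q^Q$ uniformly in $q$, which follows from the fact that $\delta_q(F^{+D})$ is a bounded Lipschitz-boundary region of volume $O(q^Q)$ and that $G(\Z)$ has a fixed fundamental domain of bounded diameter; the boundary contribution is of lower order in $q$. Everything else (the ball-volume formula and the absolute convergence of $\sum q^{-1-\epsilon}$) is routine. Note that we do not require $q$ to be the minimal denominator of the rational point, which only makes the estimate easier for an upper bound.
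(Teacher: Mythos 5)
Your proposal is correct and follows essentially the same route as the paper: reduce by $G(\Z)$-invariance to a fundamental domain, bound the measure of the set $E_q$ of $q$-approximable points by (lattice point count) $\times$ (ball volume) $\ll q^{Q}\cdot q^{-(Q+1+\epsilon)}=q^{-1-\epsilon}$, and apply Borel--Cantelli. The only cosmetic difference is that the paper first passes to a bi-Lipschitz-equivalent weighted infinity metric so the lattice count becomes a literal box count, whereas you invoke a general lattice-point-vs-volume comparison; both are fine.
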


\begin{proof} The statement is invariant under bi-Lipschitz changes of metric, so we will instead work with a weighted infinity metric $d_{\infty, \Lambda}$, which we denote by $d$. We set $\alpha = (Q+1+\epsilon)/Q$ and denote by $n$ the topological dimension of $G$.

Since $d(g,\delta_q^{-1} p) = d(\gamma * g, \delta_q^{-1}(\delta_q \gamma * p))$ for any $\gamma \in G$, it suffices to prove that the set $E\cap X$ has measure zero, where $ X= [0,1)^n$. This set acts as a fundamental domain for $G(\Z)$ acting on $G$. That is, the sets $\gamma*X$, $\gamma\in G(\Z)$, cover the entire space $G$ with no overlaps. 

Let $q\in \N$ and let $E_q$ be the set of points $g\in X$ admitting a solution to \eqref{eq:upperepsilon} or, equivalently,
\[\label{eq:borelcantellifirsteq}
d(\delta_q g, p)<Cq^{-(1+\epsilon)/Q}.
\]
We note that the size of $E_q$ is at most the number of integer points $p\in G(\Z)$ satisfying \eqref{eq:borelcantellifirsteq} times the volume  of a ball of radius $Cq^{-\alpha}$, which will be at most  $O(q^{-Q-1-\epsilon})$.

To count the number of $p\in G(\Z)$ satisfying \eqref{eq:borelcantellifirsteq}, we note first that there exists some $r>0$ such that $X$ is contained in $B_r(0)$, the ball of radius $r$ around the origin. We claim that if $q$ is large enough, any $p$ satisfying \eqref{eq:borelcantellifirsteq} for some $g\in X$ must be in the ball $B_{2qr}(0)$. This follows from the triangle inequality since
\(
d(0,p) \le d(0,\delta_q g) + d(\delta_q g, p) \le qr + Cq^{-(1+\epsilon)/Q}
\) 
and this is less than $2qr$ provided $q$ is large enough.

Since $d$ is a weighted infinity metric, the ball $B_{2qr}(0)$ is a box containing $O(q^Q)$ integer points. 
Thus we see that $|E_q| =O(q^{-1-\epsilon})$.

Note now that $E\cap X \subset \cup_{q=N}^\infty E_q$ for each $N>0$. As $N$ grows, the volume of the latter set tends to zero, since it is bounded above by a constant times  the tail of the convergent series $\sum_{q=1}^\infty \norm{q}^{-1-\epsilon}$.
\end{proof}

\subsection{Lower bound  for $\alpha(G)$ and counting solutions}
\label{sec:CarnotLowerBound}

The goal of this section is to prove:
\begin{thm}
\label{thm:GeometricLowerBound}
Let $G$ be a rational Carnot group of Hausdorff dimension $Q\ge 3$ with a weighted infinity metric $d=d_{\infty, \Lambda}$. For a constant $C>0$, let $A_C(g,N)$ denote the number of solutions to
\(
d (g, \delta_q^{-1} p) \le C q^{-\frac{Q+1}{Q}}, \quad p\in G(\mathbb{Z}), \ q\in \mathbb{N},\  1\le q \le N.
\)

Let $\epsilon>0$ be fixed. Then for almost all $g\in G$ we have that 
\[\label{eq:DioLowerBoundAsymp}
A_C(g,N) = B_\Lambda \cdot  C^Q \log N + O\left( (\log N)^{1/2}(\log \log N)^{3/2+\epsilon}\right).
\]
\end{thm}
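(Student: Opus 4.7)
The plan is to use a second moment / variance method combined with a Gal--Koksma type almost-sure convergence theorem, refining the Borel--Cantelli argument of \S \ref{sec:CarnotBorelCantelli}.

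\textbf{Step 1 (Set-up and first moment).} As in the proof of Theorem \ref{thm:GeometricUpperBound}, I would restrict to the fundamental domain $X=[0,1)^n$ for $G(\Z)$ and define the counting function
\[
f_q(g) := \#\bigl\{p\in G(\Z)\st d(g,\delta_q^{-1}p)\le Cq^{-(Q+1)/Q}\bigr\},
\]
so that $A_C(g,N)=\sum_{q=1}^N f_q(g)$. For $q$ larger than some threshold $q_0$ (depending on $C$ and $\Lambda$), the balls of radius $r_q:=Cq^{-(Q+1)/Q}$ centred at the $q^Q$ points of $\delta_q^{-1}G(\Z)\cap X$ are pairwise disjoint, each of volume $B_\Lambda r_q^Q$. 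Hence
\[
\int_X f_q(g)\,dg = q^Q\cdot B_\Lambda C^Q q^{-(Q+1)} = \frac{B_\Lambda C^Q}{q},
\]
and summing gives $\int_X A_C(g,N)\,dg = B_\Lambda C^Q\log N + O(1)$, which will be the leading term of \eqref{eq:DioLowerBoundAsymp}.

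\textbf{Step 2 (Variance and covariance).} Since each $f_q$ is $\{0,1\}$-valued for $q\ge q_0$, the individual variances satisfy $\mathrm{Var}(f_q)\le \int_X f_q\,dg \ll 1/q$. For $q_1<q_2$ the mixed integral
\(
\int_X f_{q_1}f_{q_2}\,dg = \sum_{p_1,p_2}\mathrm{vol}\bigl(B(\delta_{q_1}^{-1}p_1,r_{q_1})\cap B(\delta_{q_2}^{-1}p_2,r_{q_2})\bigr)
\)
is dominated by the count of pairs $(p_1,p_2)\in G(\Z)^2$ whose rational translates lie within $r_{q_1}+r_{q_2}$ of each other, weighted by the intersection volume. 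Using \eqref{eq:BCH} and the rational structure of $G$, both lattices sit inside a common refinement of mesh $\delta_{\mathrm{lcm}(q_1,q_2)}^{-1}G(\Z)$, so this count admits an $\mathrm{lcm}/\mathrm{gcd}$--type bound. I would carry out the calculation so as to produce an estimate of the form
\[
\mathrm{Cov}(f_{q_1},f_{q_2})\ll \frac{\gcd(q_1,q_2)^{a}}{(q_1q_2)^{b}}
\]
for suitable exponents $a,b$ that make the double sum telescope to $V_N:=\mathrm{Var}(A_C(\cdot,N))\ll \log N$ after summation over $q_1,q_2\le N$.

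\textbf{Step 3 (Gal--Koksma).} With mean and variance both of order $\log N$, the Gal--Koksma variance theorem (see e.g.\ Harman's \emph{Metric Number Theory}, Ch.\ 2) yields for almost every $g\in X$ and every $\epsilon>0$
\[
A_C(g,N) = \int_X A_C(g,N)\,dg + O\bigl(V_N^{1/2}(\log V_N)^{3/2+\epsilon}\bigr),
\]
which combined with Step 1 gives \eqref{eq:DioLowerBoundAsymp} on $X$. Translation invariance of the Diophantine condition under $G(\Z)$ then extends the conclusion from $X$ to almost all of $G$.

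\textbf{Main obstacle.} The crux is the covariance estimate in Step 2. In the abelian case $G=\R^n$ the lattices $q^{-1}\Z^n$ have a transparent interaction via $\gcd$ and $\mathrm{lcm}$, but in a non-commutative Carnot group the BCH polynomials $p_i$ in \eqref{eq:BCH} couple coordinates across the layers and the set $\delta_q^{-1}G(\Z)$ is not itself a subgroup of $G$. Accordingly, the lattice-point count for close pairs in $\delta_{q_1}^{-1}G(\Z)\cup\delta_{q_2}^{-1}G(\Z)$ must be carried out layer by layer, with uniform control of constants as $q_1,q_2$ vary. I expect this is also where the hypothesis $Q\ge 3$ enters, needed to ensure the relevant divisor sums converge and the variance remains $O(\log N)$ rather than growing faster.
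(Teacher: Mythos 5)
Your proposal follows essentially the same route as the paper: restrict to the fundamental domain $X=[0,1)^n$, compute the first moment $\int_X f_q = B_\Lambda C^Q/q$ from the disjointness of the balls (for $q$ large), estimate the covariance $\int_X f_{q_1}f_{q_2}$ via the interaction between the two rational lattices, layer by layer in the BCH coordinates, and then invoke the Gal--Koksma variance theorem (Lemma 1.5 of Harman, the paper's Lemma \ref{lemma:HarmanAlternate}). The paper implements your Step 2 concretely through Lemma \ref{lemma:LinearEquationCount}, producing covariance error terms of the form $O(q^{1/Q}/{q'}^{2+1/Q})+O(\gcd(q,q')/q^{1-1/Q}{q'}^2)+O(\gcd(q,q')^Q/{q'}^{Q+1})$, and the $Q\ge 3$ hypothesis indeed enters, as you predict, to make the divisor sum $\sum_{k\mid q'}k^{-Q+1}$ bounded; your only small misstatement is that $\delta_q^{-1}G(\Z)$ \emph{is} a subgroup (image of $G(\Z)$ under the automorphism $\delta_q^{-1}$), though the layer-by-layer analysis you anticipate is still required because the BCH polynomials couple the layers in the relevant counting problem.
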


The exact asymptotic of \ref{thm:GeometricLowerBound} does not hold for an arbitrary metric bi-Lipschitz to $d_{\infty, \Lambda}$. Instead, adjusting the definition of $A_C(g,N)$ appropriately, we get that $A_C(g,N)\asymp \log N$ for almost all points $g$. In particular, the estimate $\alpha(G)\geq (Q+1)/Q$ holds for any metric bi-Lipschitz to $d_{\infty, \Lambda}$, such as the Carnot metric or the gauge metric for $\Heis^n$.

Note also that there is no restriction in Theorem \ref{thm:GeometricLowerBound} that the points $\delta_q^{-1} p$ be in ``lowest terms" in the sense that $q$ does not divide all of the coordinates of $p$. Thus the same rational point may be counted several times in this asymptotic.

Our primary tool in proving Theorem \ref{thm:GeometricLowerBound} is the following lemma.

\begin{lemma}[Lemma 1.5 in \cite{HarmanBook}]
\label{lemma:HarmanAlternate}
Let $X$ be a measure space with measure $\mu$ such that $0<\mu(X)<\infty$. Let $f_k(x)$, $k\in \mathbb{N}$, be a sequence of non-negative $\mu$-measurable functions and let $f_k$, $\phi_k$ for $k\in \mathbb{N}$ be sequences of real numbers such that $0\le f_k \le \phi_k$ for all $k$. Write $\Phi(N) = \sum_{k=1}^N \phi_k$, and suppose that $\Phi(N) \to \infty$ as $N\to \infty$. Suppose that for arbitrary integers $1\le m \le n$ we have
\[\label{eq:IntAsympEstimate}
\int_X \left( \sum_{m\le k \le n} (f_k(x) - f_k) \right)^2 \ d\mu \le K \sum_{m\le k \le n} \phi_k
\]
for an absolute constant K. Then for any given $\epsilon>0$, and for almost all $x$, we have, as $N \to \infty$,
\(
\sum_{k=1}^N f_k(x) = \sum_{k=1}^n f_k + O\left( \Phi^{1/2}(N) (\log (\Phi(N) +2))^{3/2+\epsilon} + \max_{1\le k \le N} f_k\right).
\)
\end{lemma}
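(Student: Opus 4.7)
The plan is the classical Gál--Koksma variance method. Set $h_k(x):=f_k(x)-f_k$ and $S_N(x):=\sum_{k=1}^N h_k(x)$; the conclusion is equivalent to $|S_N(x)|\ll \Phi(N)^{1/2}(\log(\Phi(N)+2))^{3/2+\epsilon}+\max_{k\le N}f_k$ for $\mu$-a.e.\ $x$. First I would upgrade the given variance hypothesis to a Rademacher--Menshov-type maximal inequality: for any integers $M\le M'$,
\[
\int_X \max_{M\le N\le M'}|S_N-S_M|^2\,d\mu \;\le\; C(\log(M'-M+2))^2\bigl(\Phi(M')-\Phi(M)\bigr),
\]
proved by dyadically splitting $[M+1,M']$ into $O(\log(M'-M))$ nested levels of consecutive subblocks and applying the $L^2$ triangle inequality together with the hypothesis on each level.

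Next, I would choose the sparse subsequence $N_j:=\min\{N:\Phi(N)\ge 2^j\}$, which exists since $\Phi(N)\to\infty$, and apply the maximal inequality on each block $[N_j,N_{j+1}]$ with the Chebyshev threshold $\lambda_j:=2^{j/2}\,j^{(3+\epsilon)/2}$. In the generic case where no single $\phi_k$ near $N_{j+1}$ is anomalously large, one obtains $\mu\bigl(\max_{N_j\le N\le N_{j+1}}|S_N-S_{N_j}|>\lambda_j\bigr)\ll j^{-(1+\epsilon/2)}$, a convergent series. Borel--Cantelli then yields, for $\mu$-a.e.\ $x$, a block-wise fluctuation bounded by $\lambda_j$ for all but finitely many $j$; telescoping over $j\le \log_2\Phi(N)$ delivers the target estimate $|S_N(x)|\ll\Phi(N)^{1/2}(\log(\Phi(N)+2))^{3/2+\epsilon}$.

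The main obstacle is handling indices $k_0$ with $\phi_{k_0}$ comparable to $\Phi(N)$: the subsequence $N_j$ may then overshoot several dyadic levels in a single step, and the Chebyshev estimate on the relevant block degrades. This is precisely the role of the $\max_{k\le N}f_k$ error term. The strategy is, for each $N$, to isolate the single index $k_0$ achieving this maximum, to absorb $|h_{k_0}(x)|\le f_{k_0}(x)+f_{k_0}$ directly into the error, and to rerun the subsequence argument on the remaining indices whose weights no longer contain the anomalous jump. The delicate quantitative bookkeeping---balancing the $\log^2$ factor from the maximal inequality against the Borel--Cantelli summability of $\lambda_j^{-2}\bigl(\Phi(N_{j+1})-\Phi(N_j)\bigr)$---is what produces the sharp logarithmic exponent $3/2+\epsilon$ rather than a weaker exponent such as $2+\epsilon$, and is where most of the technical effort lies.
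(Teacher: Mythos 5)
The paper does not prove this lemma: it is Lemma~1.5 of Harman's \emph{Metric Number Theory} \cite{HarmanBook}, cited as an external result, so there is no in-paper proof to compare against — only the standard argument in the literature.

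Your overall architecture (variance hypothesis $\Rightarrow$ maximal inequality $\Rightarrow$ sparse subsequence $\Rightarrow$ Chebyshev $\Rightarrow$ Borel--Cantelli) is the right one, but the maximal inequality you posit does not serve the purpose. You split $[M+1,M']$ dyadically into sub-blocks of comparable \emph{index} length, giving the bound $C\bigl(\log(M'-M+2)\bigr)^2\bigl(\Phi(M')-\Phi(M)\bigr)$. The quantities $M'-M$ and $\Phi(M')-\Phi(M)$ are decoupled: if the $\phi_k$ are small, a block over which $\Phi$ merely doubles can contain an enormous number of indices, and the logarithmic factor becomes useless. This is exactly what occurs in the paper's own application in Theorem~\ref{thm:GeometricLowerBound}, where $\phi_k\asymp 1/k$, hence $\Phi(N)\asymp \log N$. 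With $N_j=\min\{N:\Phi(N)\ge 2^j\}$ one has $N_j\approx e^{2^j}$, so $\log(N_{j+1}-N_j)\approx 2^{j+1}$, which is exponential rather than polynomial in $j$; your Chebyshev estimate for the block $[N_j,N_{j+1}]$ then reads $\asymp 2^{2j}/j^{3+\epsilon}\to\infty$ and Borel--Cantelli is vacuous. Your ``main obstacle'' paragraph anticipates the opposite pathology (a single $\phi_{k_0}$ so \emph{large} that the sparse sequence overshoots), but the failure here comes from the $\phi_k$ being uniformly \emph{small}, and your hedge does not touch it.

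The fix is to run the Rademacher--Menshov chaining with respect to $\Phi$-mass rather than index count: at level $\ell$, partition $[M+1,M']$ into sub-intervals each of $\Phi$-mass at most $2^{-\ell}\bigl(\Phi(M')-\Phi(M)\bigr)$ (allowing singletons when a single $\phi_k$ exceeds the target), and descend only until each block carries $\Phi$-mass $O(1)$. The depth is then $O\bigl(\log(\Phi(M')-\Phi(M))\bigr)$ irrespective of the raw index count; the per-block Chebyshev probability becomes $\asymp j^2\cdot 2^j/\lambda_j^2 \asymp j^{-(1+\epsilon)}$, which is summable, and the telescope closes with the stated exponent $3/2+\epsilon$. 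The $\max_{k\le N}f_k$ error term then enters through the bookkeeping of the $\Phi$-dyadic splits not aligning with integer indices and the interpolation at the bottom of the chain, not as a repair of the Chebyshev step as you suggest.
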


In applications of Lemma \ref{lemma:HarmanAlternate}, we will take $\mu$ to be the Lebesgue measure, $\norm{X}=1$, $f_k(x)$ the indicator function of a set $\mathcal{E}_k$, and $f_k = \int_X f_k(x) \ dx =\norm{\mathcal{E}_k}$. In this case, the inequality \eqref{eq:IntAsympEstimate} simplifies to the following:
\(
\sum_{m\le k, k' \le n} \left( \norm{\mathcal{E}_k \cap \mathcal{E}_{k'}} - \norm{\mathcal{E}_k}\cdot \norm{\mathcal{E}_{k'}}\right) \le K \sum_{m\le k \le n} \phi_k.
\)

\begin{lemma}\label{lemma:LinearEquationCount}
Let $n,m\in \N$, let $k\in \R$, and let $A$ be a positive real number. The number of integer solutions $x,y$ to 
\(
|xn-ym+k| \le  A , \quad 0\le x < m,\quad  0 \le y < n
\) 
is at most $2A+\gcd(n,m)$.
\end{lemma}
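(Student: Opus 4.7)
The plan is to reduce the two-dimensional count to two independent one-dimensional counts by first restricting the value of the linear form, and then bounding the fiber size.

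Set $d=\gcd(n,m)$. The first observation is that for any integers $x,y$, the quantity $xn-ym$ is automatically a multiple of $d$. Hence the constraint $|xn-ym+k|\le A$ forces $xn-ym$ to lie in the interval $[-A-k,\,A-k]$, and to take a value in the set $d\Z$. Since this interval has length $2A$ and consecutive multiples of $d$ are spaced by $d$, the number of admissible values $v=xn-ym$ is at most $\lfloor 2A/d\rfloor+1$.

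Next I would bound, for each admissible $v$, the number of integer pairs $(x,y)$ with $xn-ym=v$ and $0\le x<m$, $0\le y<n$. If this equation admits any integer solution $(x_0,y_0)$, then the general integer solution is $(x_0+t\,m/d,\,y_0+t\,n/d)$ for $t\in\Z$, as one checks directly by plugging in. Therefore the $x$-coordinates of solutions form an arithmetic progression of common difference $m/d$; the window $0\le x<m$ has length $m$, so it contains exactly $d$ such values of $x$ (and each determines $y$ uniquely). Hence at most $d$ pairs lie in the box for any given $v$ (and $0$ if the equation has no integer solution).

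Combining the two bounds, the total number of admissible pairs $(x,y)$ is at most
\(
d\cdot\Bigl(\bigl\lfloor 2A/d\bigr\rfloor+1\Bigr)\le 2A+d=2A+\gcd(n,m),
\)
which is the claim. The only mildly delicate point is the parametrization of the integer solutions of $xn-ym=v$ and the resulting fiber bound of $d$; everything else is a direct interval-counting argument, so I do not expect any serious obstacle here.
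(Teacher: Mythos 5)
Your proof is correct and takes essentially the same approach as the paper: both count the multiples of $\gcd(n,m)$ in the length-$2A$ interval, then bound each fiber by observing that the $x$-coordinates of solutions to $xn-ym=v$ are spaced $m/\gcd(n,m)$ apart, hence at most $\gcd(n,m)$ of them land in $[0,m)$. The paper phrases the fiber bound via a residue argument modulo $m'=m/\gcd(n,m)$, while you phrase it via the standard parametrization of solutions to a linear Diophantine equation, but these are the same observation.
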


\begin{proof}
By rewriting the equations, we see that this is equivalent to asking for 
\(
-A-k \le xn-ym \le A-k.
\)
Let us consider  a fixed $a\in [-A-k,A-k]$ and consider how many solutions there are to $xn-ym=a$.  Since, in this equation, $y$ is completely dependent on $x$ and since we only care about an upper bound on the number of solutions, it suffices to count how many $x$'s in the desired range $0 \le x < m$ have a solution.

The equation $xn-ym=a$ has no solutions unless $\gcd(n,m)|a$. So suppose this divisibility condition holds, and write $n'=n/\gcd(n,m)$, $m'=m/\gcd(n,m)$ and $a'=a/\gcd(n,m)$. Then we are really counting the number of solutions to $xn'-ym'=a'$ with $0\le x< m$, $0\le y < n$.  By taking this equation modulo $m'$ we see that $xn' \equiv a' \pmod{m'}$. Since $a'$ is fixed and $\gcd(n',m')=1$, this has a unique solution with $0\le x< m'$. And thus there are $m/m' = \gcd(n,m)$ solutions with $0\le x < m$. Thus if $\gcd(n,m)|a$ there are $\gcd(n,m)$ solutions, and if $\gcd(n,m)$ does not divide $a$ there are $0$ solutions. 

The number of multiples of $\gcd(n,m)$ in $[-A-k,A-k]$ is at most
\(
\frac{2A}{\gcd(n,m)}+1
\)
and so multiplying this by $\gcd(n,m)$ gives the desired bound.
\end{proof}

\begin{proof}[Proof of Theorem \ref{thm:GeometricLowerBound}]
We shall prove the theorem in the specific case where $G=\Heis^1$, as the general case is an extension of this. Note that here $(Q+1)/Q=5/4$. We may assume throughout this proof that we are taking $q$ to be sufficiently large, namely larger than some $N_0$, since ignoring all smaller $q$  changes $A_C(g,N)$ by at most a constant.

Here we take a weighted infinity metric that is given by
\(
\|(x,y,t)\|_\infty = \max\{ |x|, |y|, |\lambda t|^{1/2}\}.
\)
By our earlier calculation, the volume of a ball of radius $r$ is $B_\Lambda \cdot r^4 = 2^3 \lambda^{-1} r^4$.

As in the proof of Theorem \ref{thm:GeometricUpperBound}, consider the box $X=[0,1)^3$ seen as a subset of $\Heis^1$. This set has measure $1$ and acts as a fundamental domain for $\Heis^1(\Z) \backslash \Heis^1$. It suffices to prove that the desired asymptotic \eqref{eq:DioLowerBoundAsymp} holds for almost all $h \in X$, as the same proof will hold for almost all $h\in\gamma*X$ for any $\gamma\in \Heis^1(\Z)$. 

In order for $d_\infty(h, \delta_q^{-1} p)\le Cq^{-5/4}$ we must have that $h$ is in a ball of radius $Cq^{-5/4}$ centered at $\delta_q^{-1} p$. 
So we will apply Lemma \ref{lemma:HarmanAlternate} with $\mathcal{E}_q$ denoting the set of balls of radius $Cq^{-5/4}$ around rational points $\delta_q^{-1} p$, $p\in \Heis^1(\Z)$, intersected with $X$. To simplify notation, let us use $B(p,q)$ to denote a ball of radius $Cq^{-5/4}$ around $\delta_q^{-1} p$. Thus,
\(
\mathcal{E}_q = \bigcup_{p\in \Heis^1(\Z)} B(p,q) \cap X.
\)
(Here and for the remainder of the proof, we will always assume that $p$ or $p'$ refer to points in $\Heis^1(\Z)$.)

To apply Lemma \ref{lemma:HarmanAlternate}, we first need a bound on $|\mathcal{E}_q|$. 

We claim that in measuring $|\mathcal{E}_q|$ we may instead assume that $\mathcal{E}_q$ consists only of balls $B(p,q)$ with $\delta_q^{-1} p\in X$, now with no intersection of the ball with $X$.

In one direction, consider a ball $B(p,q)$ that intersects but is not a subset of $X$, so that it contains a point $h \not\in X$. Since $X$ is a fundamental domain for $\Heis^1(Z)$ acting on $\Heis^1$, there exists a unique $\gamma\in \Heis^1(\Z)$ such that $\gamma *h \in X$. In addition we have that $\gamma*B(p,q) = B(\delta_q \gamma * p,q)$ and $\delta_q \gamma* p\in \Heis^1(\Z)$, thus $\gamma *h$ belongs to some set $B(p',q)\cap X$. Thus each point in the alternate definition of $\mathcal{E}_q$ corresponds to a unique point in the original definition of $\mathcal{E}_q$. 

In the other direction, consider a point $h$ that belongs to a set $B(p,q)\cap X$ with $\delta_q^{-1} p\not \in X$. Since $X$ is a fundamental domain, there exists a unique $\gamma\in \Heis^1(\Z)$ such that $\gamma * \delta_q^{-1} p = \delta_q^{-1} (\delta_q \gamma * p) \in X$. Since $\gamma$ cannot be the identity element, we must have now that $\gamma *h $ belongs to a ball $B(p',q)$ such that $\delta_q^{-1} p'\in X$. Thus each point in the original definition of $\mathcal{E}_q$ corresponds to a unique point in the alternate definition of $\mathcal{E}_q$, and so there is a bijection between the points. Moreover, since this bijection is a piecewise translation, their measures must be the same.

Therefore, for the purposes of measuring the size of $\mathcal{E}_q$, we may assume it consists of the balls $B(p,q)$, $\delta_q^{-1} p\in X$. Since the corresponding $p$ must live in $[0,q)^2\times [0,q^2)$, we see that there are $q^4$ such balls. Let $\epsilon>0$ denote the minimum distance from the origin to any other point in $\Heis^1(\Z)$. So, if $p\neq p'$ and both are in $\Heis^1(\Z)$, then
\[
d_\infty (\delta_q^{-1} p, \delta_q^{-1} p') = \frac{1}{q} d_{\infty} (p,p') \ge \frac{\epsilon}{q}.
\]
But if the balls $B(p,q)$ and $B(p',q)$ overlap, then their centers must be within $2Cq^{-5/4}$ (twice the radius) of each other.
Therefore, provided $q$ is sufficiently large---and as noted at the start of this proof, we may assume that it is---each of the balls composing $\mathcal{E}_q$ are disjoint. Since each of them has size $B_\Lambda (Cq^{-5/4})^4 = B_\Lambda C^4 q^{-5}$, we obtain $\norm{\mathcal{E}_q} = B_\Lambda C^4 q^{-1}$.

Next we need to estimate the size of $\norm{\mathcal{E}_q \cap \mathcal{E}_{q'}}$. We may assume, without loss of generality that $q\le q'$. Also, if $q=q'$, then we already know the size of this set to be $B_\Lambda C^4 q^{-1}$, so we further assume that $q< q'$. 

By a similar argument as above, when measuring the size of $\norm{\mathcal{E}_q \cap \mathcal{E}_{q'}}$, we may assume that $\mathcal{E}_q$ consists of the balls $B(p,q)$ for $\delta_q^{-1}p \in X$, and we may assume that $\mathcal{E}_{q'}$ consists balls $B(p',q')$ around \emph{all} rational points $\delta_{q'}^{-1}p'$. We may make this large assumption on $\mathcal{E}_{q'}$ since the only parts about it we care about are those that intersect $\mathcal{E}_q$. We are continuing in our assumption that $q$ will be sufficiently large so that two distinct balls $B(p,q), B(p',q)$ from the set $\mathcal{E}_q$ do not overlap one another, and similarly for $q'$ and $\mathcal{E}_{q'}$.

Consider two balls $B(p,q)$ and $B(p',q')$ that intersect one another. Let us write out the centers of these intersecting balls, coordinate-wise, as $\delta_q^{-1}(a,b,c)$ and $\delta_{q'}^{-1}(a',b',c')$. Note that we must have $a,b\in [0,q)$, $c\in [0,q^2)$, $a',b'\in [0,q')$, and $c'\in [0,{q'}^2)$. Then we have that
\[\label{eq:CarnotProofEq3}
d_\infty \left(\delta_q^{-1}(a,b,c),\delta_{q'}^{-1}(a',b',c')\right) \le C(q^{-5/4}+{q'}^{-5/4}),
\]
Here the right inequality comes from the fact that if there is any overlap, then the distance between the centers must be at most the sum of the radii. To bound the number of intersecting balls, we must simply bound the number of solutions to \eqref{eq:CarnotProofEq3}.

  To do this, we take \eqref{eq:CarnotProofEq3} and dilate by $qq'$ to get 
\(
 d_\infty \left( (q'a,q'b,{q'}^2c),(qa',qb',q^2c')\right) \le C(q'q^{-1/4}+q{q'}^{-1/4}).
\)
(In a general Carnot group $G$, the $1/4$'s would be replaced by $1/Q$'s.)
Now we make use of the fact that  
\(
d_\infty ((x,y,t),(x',y',t'))=\max\{|x-x'|,|y-y'|,|\lambda(t-t'-2(xy'-x'y))|^{1/2}\}.
\)
This gives that
\(
& \max\{ |q'a-qa'|,|q'b-qb'|,|\lambda({q'}^2c-q^2c'-2qq'(ab'+2ba'))|^{1/2} \} \\&\qquad \le C(q'q^{-1/4}+q{q'}^{-1/4}).
\)

By Lemma \ref{lemma:LinearEquationCount}, we know that the number of possible solutions $(a,b,a',b')$ in the desired range to
\(
|q'a-qa'|\le C(q'q^{-1/4}+q{q'}^{-1/4})
\)
and
\(
|q'b-qb'|\le C(q'q^{-1/4}+q{q'}^{-1/4}).
\)
is at most $(2C (q'q^{-1/4}+q{q'}^{-1/4})+\gcd(q,q'))^2$. And for a fixed set $(a,a',b,b')$ we have that the number of solutions $(c,c')$ in the desired range to 
\(
 |{q'}^2c-q^2c'-2qq'(ab'+2ba')| \le  \lambda^{-1} C^2(q'q^{-1/4}+q{q'}^{-1/4})^2
\)
is at most $2 \lambda^{-1} C^2(q'q^{-1/4}+q{q'}^{-1/4})^2+\gcd(q,{q'})^2$, since $\gcd(q^2,{q'}^2) =\gcd(q,q')^2$. Now we want to multiply these bounds together. It is clear that $q'q^{-1/4}$ is always larger than $q{q'}^{-1/4}$; however, if $q'q^{-1/4}$ is smaller than $\gcd(q,q')$, then we just bound this product by \[ \label{eq:CarnotProofEq0} O(\gcd(q,q')^4),\] and if $q'q^{-1/4}$ is larger than $\gcd(q,q')$, we may bound the product by 
\[\label{eq:CarnotProofEq1}
B_\Lambda C^4 \frac{{q'}^4}{q}\left( 1+ O\left( \frac{q^{5/4}}{{q'}^{5/4}}\right) +O\left( \frac{q^{1/4} \cdot \gcd(q,q')}{q'} \right)\right).
\]
Here we have implicitly used the fact that $(1+x)^n = 1+O(x)$ provided $n$ is fixed and $x$ is bounded. Thus the total number of intersecting balls is at most the sum of \eqref{eq:CarnotProofEq0} and \eqref{eq:CarnotProofEq1}.

(In a general Carnot group $G$, we see that each time we apply Lemma \ref{lemma:LinearEquationCount}  to bound the number of solutions corresponding to integer points in some $x_i$, we get a constant factor of $2\lambda_i^{-1} C^i$. So for $G$ instead of getting $B_\Lambda C^4$ in \eqref{eq:CarnotProofEq1}, we will get $B_\Lambda C^Q$. There are also changes to the exponents, but we will comment on that more later.)

The size of the intersection between any two such balls is at most the volume of the smaller ball, which is $B_\Lambda C^4 {q'}^{-5}$. Multiplying this volume by the sum of \eqref{eq:CarnotProofEq0} and \eqref{eq:CarnotProofEq1}, we have that if $q< q'$, then
\(
\norm{\mathcal{E}_q \cap \mathcal{E}_{q'}} &\le \frac{B_\Lambda C^4}{q'} \cdot \frac{B_\Lambda C^4}{q}  +O\left( \frac{q^{1/4}}{{q'}^{9/4}}\right) + O\left( \frac{\gcd(q,q')}{q^{3/4} {q'}^{2}} \right) + O\left( \frac{\gcd(q,q')^4}{{q'}^5}\right)\\
&= \norm{\mathcal{E}_q}\cdot \norm{\mathcal{E}_{q'}}  +O\left( \frac{q^{1/4}}{{q'}^{9/4}}\right) + O\left( \frac{\gcd(q,q')}{q^{3/4} {q'}^{2}} \right) + O\left( \frac{\gcd(q,q')^4}{{q'}^5}\right).
\)

(In a general Carnot group $G$, we would have a similar bound, but with big-Oh terms that look like
\(
O\left( \frac{q^{1/Q}}{{q'}^{2+(1/Q)}}\right) + O\left( \frac{\gcd(q,q')}{q^{1-(1/Q)}{q'}^2}\right)+O\left( \frac{\gcd(q,q')^Q}{{q'}^{Q+1}}\right),
\)
and all the bounds we use below will apply just as well when these big-Oh terms are used.)

So for any integers $ m \le  n$ with $m$ sufficiently large, we have the following
\(
&\sum_{m\le q,q' \le n}\left(\norm{\mathcal{E}_q \cap \mathcal{E}_{q'}} - \norm{\mathcal{E}_q}\cdot \norm{\mathcal{E}_{q'}} \right)\\
 &\qquad= 2\sum_{m\le q < q' \le n}\left(\norm{\mathcal{E}_q \cap \mathcal{E}_{q'}} - \norm{\mathcal{E}_q}\cdot \norm{\mathcal{E}_{q'}} \right)+ \sum_{m\le q \le n}\left( \norm{\mathcal{E}_q } - \norm{\mathcal{E}_q }^2  \right)\\
&\qquad= 2\sum_{m\le q < q' \le n} \left( O\left( \frac{q^{1/4}}{{q'}^{9/4}}\right) +O\left( \frac{\gcd(q,q')}{q^{3/4} {q'}^{2}} \right)+ O\left( \frac{\gcd(q,q')^4}{{q'}^5}\right) \right) \\ &\qquad \qquad+ \sum_{m\le q \le n} O\left( \frac{1}{q}\right).
\)
Now we examine some of these sums individually.

First we note that $\sum_{1\le n \le x} n^\alpha \le \int_1^{x+1} t^\alpha \ dt = O(x^{\alpha+1})$ for any $\alpha>-1$. Therefore, we have
\(
\sum_{m\le q < q' \le n} \frac{q^{1/4}}{{q'}^{9/4}} &= \sum_{m < q' \le n} \frac{1}{{q'}^{9/4}} \left( \sum_{m \le q < q'} q^{1/4} \right) \\
&= \sum_{m < q' \le n} \frac{1}{{q'}^{9/4}} \cdot O\left( {q'}^{5/4}\right) = \sum_{m < q' \le n} O\left( \frac{1}{q'}\right).
\)

Next, let $\phi(n)$ denote the standard Euler totient function which counts the number of positive integers up to $n$ that are relatively prime to $n$. It is a standard fact of number theory that for any divisor $k$ of $q'$, we have that the number of $q$ such that $1\le q\le q'$ and $\gcd(q,q')=k$ is $\phi(q'/k)$.
Thus,
\(
\sum_{m\le q < q' \le n}\frac{\gcd(q,q')^4}{{q'}^5} &\le \sum_{m < q' \le n} \frac{1}{{q'}^5} \left( \sum_{1\le q \le q'} \gcd(q,q')^4\right)\\
& = \sum_{m < q' \le n} \frac{1}{{q'}^5}  \left( \sum_{k | q'} k^4 \cdot \phi(q'/k)\right).
\)
In this new internal sum we replace $k$ with $q'/k$ and note that $\phi(n)\le n$ for all integers $n$, so that
\(
\sum_{m\le q < q' \le n}\frac{\gcd(q,q')^4}{{q'}^5} &\le \sum_{m < q' \le n} \frac{1}{q'} \left( \sum_{k | q'} k^{-3} \right)\\
&=  \sum_{m < q' \le n} \frac{1}{q'} \cdot O(1)= \sum_{m < q' \le n} O\left(\frac{1}{q'} \right).
\)
(In the general case, it is here that we use our assumption that $Q\ge 3$, as otherwise, the sum $\sum_{k|q'} k^{-Q+1}$ will not be bounded.)

For the final sum, we will again consider $\gcd(q,q')=k$, so that $q=kd$ for some positive integer $d$. Technically, $d$ should be relatively prime to $q'$ but we may add in all possible $d$'s that are less than or equal to $q'/k$ (since $q<q'$) as this only increases the size of the sum. Thus,
\(
\sum_{m\le q < q' \le n} \frac{\gcd(q,q')}{q^{3/4} {q'}^{2}} &\le \sum_{m \le q' \le n} \frac{1}{{q'}^2} \sum_{k| q'} k^{1/4} \sum_{d\le q'/k} d^{-3/4}\\
&= \sum_{m\le q' \le n} \frac{1}{{q'}^2} \sum_{k| q'} k^{1/4} \cdot O\left( \left( \frac{q'}{k}\right)^{1/4}\right)\\
&= \sum_{m\le q'\le n} \frac{1}{{q'}^{7/4}} O\left( \sum_{k| q'} 1\right).
\)
This internal sum is the number of divisors of $q'$, often denoted $d(q')$. It is a well-known number-theoretic fact that $d(m) =O(m^{\epsilon})$ for any fixed $\epsilon>0$. Thus, in particular, we see that each summand may be bounded by $O({q'}^{-1})$, just as with all the others.

Therefore, we have that
\(
&\sum_{m\le q,q' \le n}\left( \norm{\mathcal{E}_q \cap \mathcal{E}_{q'}} - \norm{\mathcal{E}_q} \cdot \norm{\mathcal{E}_{q'}} \right) =\sum_{ m \le q \le n} O\left( \frac{1}{q}\right).
\)
Taking $\phi_k$ to be a suitably large constant times $k^{-1}$, we see that all the conditions of Lemma \ref{lemma:HarmanAlternate} hold. Noting that $\sum_{1\le n \le x} n^{-1} \sim \log x$ completes the proof.
\end{proof}

\section{Hyperbolic geometry and continued fractions}\label{sec:MT1}

In this section, we use Heisenberg continued fractions to prove the case $n=1$ of Theorem \ref{thm:SiegelIntro}, and the trichotomy Theorem \ref{thm:trichotomy}. 

\subsection{Complex hyperbolic space}
\label{sec:coredefinitions}

We recall some notions in complex hyperbolic geometry, see e.g.\ \cite{MR2753829, MR1695450, MR1189043,MR2987619} for more details.
The complex hyperbolic plane $\Hyp^2_\C$ is the unit ball in $\C^2 \subset \CP^2$, with a metric invariant under linear fractional transformations $U(2,1)$ preserving the ball. No linear fractional transformation transforms this ball into a half-space; instead, sending a boundary point off to infinity provides the Siegel model of $\Hyp^2_\C$. Namely, we view $\Hyp^2_\C$ as the space
$$\Hyp^2_\C = \{ (u,v) \in \C^2 \st 2 \Re(v) > \norm{u}^2\},$$
whose boundary is the one-point compactification of the Siegel space $\Sieg^1 = \{ (u,v) \in \C^2 \st 2 \Re(v) = \norm{u}^2\}$.

The parabolic elements of $U(2,1)$ preserving the point at infinity act simply transitively on $\partial \Hyp^2_\C=\Sieg^1$, giving it a group structure. Furthermore, from the theory of hyperbolic spaces one obtains a natural equivalence class of \emph{parabolic visual metrics} on $\Sieg^1$. The left-invariant metric given by the Koranyi norm $\Norm{(u,v)}=\norm{v}^{1/2}$ is the simplest of these.

The Picard modular group $\Gamma=U(2,1;\Z[\ii])$ is the direct analogue of the classical modular group. It is generated \cite{MR2753829} by the Koranyi inversion $\iota: (u,v)\mapsto (-u/v, 1/v)$  analogous to the classical $x\mapsto 1/x$, the rotation $(u,v)\mapsto (\ii u,v)$, and parabolic transformations acting simply transitively on the Gaussian integer points $\Sieg^1(\Z)$ of $\Sieg^1$.

\subsection{Continued fractions and $\alpha(\Sieg^1)$}
\label{sec:CFalpha}

Let $\{\gamma_i\}$, $i \ge 0$, be a possibly finite sequence in $\Sieg^1(\Z)$. The \emph{Heisenberg continued fraction} with \emph{digits} $\{\gamma_i\}$ is given by
$$\mathbb K \{\gamma_i\} = \lim_{n\rightarrow\infty} \gamma_0 * \iota(\gamma_1 * \iota (\gamma_2 *\dots *\iota( \gamma_n))),$$
if this limit exists. We showed in \cite{LV} that every point $h\in \Sieg^1$ admits a continued fraction expansion, and characterized rational points in $\Sieg^1$ as exactly those points with a finite expansion.

The continued fraction digits of $h$ are found in \cite{LV} by repeatedly applying a generalized Gauss map $T$ to the point $h$. More explicitly, let $K$ be the Dirichlet region for $\Sieg^1(\Z)$ centered at the origin, i.e., $K$ is the set of points closer to $0$ than to any other point in $\Sieg^1(\Z)$ with some choice of boundary. Let $[\cdot]$ denote the nearest-integer map from $\Sieg^1$ to $\Sieg^1(\Z)$ characterized by the property $h\in [h]*K$. For $h\in K$, we then take $Th= [\iota(h)]^{-1}* \iota(h)$ if $h\neq 0$ and $T0=0$ otherwise. (Recall that the classical Gauss map is defined by $Tx = \frac{1}{x}-\lfloor \frac{1}{x}\rfloor$ for $x\in (0,1)$.) For each $i\ge 0$, set $T^ih=h_i = (u_i, v_i)$. For a point $h\in K$, the continued fraction digits are given by $\gamma_0=(0,0)$ and $\gamma_i=[\iota h_i]$. More generally, we let $\gamma_0=[h]$ and define the remaining digits by replacing $h$ with $[h]^{-1} *h $.

The convergents of a continued fraction can be canonically written as rational numbers, and we write for each $n$:
\(
\left( \frac{r_n}{q_n}, \frac{p_n}{q_n} \right)= \gamma_0 * \iota(\gamma_1 * \iota (\gamma_2 *\dots *\iota( \gamma_n))).
\)
It can be shown that the convergents are already in lowest terms, that is, there is no Gaussian prime $\pi$ that divides $r_n,p_n,q_n$ simultaneously.

Our past results on Heisenberg continued fractions immediately provide:
\begin{thm}\label{thm:sieg1critexp}
The  Diophantine exponent $\alpha(\Sieg^1)$ for the Siegel model of $\Heis^1$ is 1. 
\end{thm}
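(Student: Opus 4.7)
The plan is to read off $\alpha(\Sieg^1) = 1$ directly from the Heisenberg continued fraction machinery developed in \cite{LV}, handling the two halves of Theorem \ref{thm:SiegelIntro} at $n = 1$ separately.

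For the divergence half (that almost every $h \in \Sieg^1$ admits infinitely many Siegel approximations at exponent $\alpha = 1$), I would invoke the fundamental convergent estimate from \cite{LV}: for every irrational $h \in \Sieg^1$, the convergents $C_n = (r_n/q_n, p_n/q_n)$ of its Heisenberg continued fraction satisfy $d(h, C_n) \ll \norm{q_n}^{-1}$ with $\norm{q_n} \to \infty$. Since the convergents are in lowest terms, $\height_{\Sieg^1}(C_n) = \norm{q_n}$, so this furnishes infinitely many solutions to the Siegel Diophantine inequality at $\alpha = 1$ for every irrational point, hence certainly for almost every point.

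For the convergence half (that for $\alpha > 1$ the set of points with infinitely many approximations has measure zero), I would run a Borel--Cantelli argument in the spirit of Theorem \ref{thm:GeometricUpperBound}, but indexed by height rather than by denominator. In a bounded fundamental domain for $\Sieg^1(\Z)$ acting on $\Sieg^1$, one parametrizes $\Sieg^1(\Q)$ by triples $(r,p,q) \in \Z[\ii]^3$ with $2\Re(p\bar{q}) = \norm{r}^2$ modulo the $\Z[\ii]^\times$-action and modulo common Gaussian integer factors. The count of distinct rationals of height $\asymp K$ in the domain works out to $\sim K^3$. Combined with the ball-volume estimate $\asymp K^{-4\alpha}$ (since $\Heis^1$ has Hausdorff dimension $4$), one obtains $\sum_K K^3 \cdot K^{-4\alpha}$, which converges for $\alpha > 1$; the first Borel--Cantelli lemma then delivers the measure-zero conclusion.

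The main obstacle will be the count $\sim K^3$: a naive enumeration of tuples $(r,p,q)$ with $\norm{q} \asymp K$ ignoring lowest terms gives $\sim K^4$, which would only yield the weaker bound $\alpha(\Sieg^1) \leq 5/4$. The improvement to the sharp value $1$ requires isolating the minimal Gaussian denominator for each rational, effectively M\"obius-inverting the tuple count over $\Z[\ii]$. Alternatively, one can bypass this counting entirely by using the best-approximation property of Heisenberg continued fractions together with an ergodic lower bound on $d(h, C_n)$ from \cite{LV}, which would force any $\norm{q}^{-\alpha}$-approximation with $\alpha > 1$ to be essentially a convergent and then rule these out by growth bounds on $\norm{q_{n+1}}/\norm{q_n}$.
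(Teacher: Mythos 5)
Your overall strategy matches the paper's exactly: use the convergents of the Heisenberg continued fraction to realize $\alpha=1$ infinitely often, and a Borel--Cantelli estimate to rule out $\alpha>1$. For the divergence half your argument is precisely what the paper does (the paper cites the estimate $d(h,C_n)\asymp \norm{v_{n+1}}^{1/2}/\norm{q_n}$ with $\norm{v_{n+1}}\le 1$ from the companion reference; both produce approximations at the sharp exponent for \emph{every} irrational $h$), and the paper also defers the convergence half to a Borel--Cantelli argument in that reference rather than reproducing it.

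Where your proposal goes astray is in the diagnosis of the ``main obstacle'' for the convergence half. The count of $\asymp K^3$ rational points of height in $[K,K+1)$ inside a bounded region does not come from isolating lowest terms, and no M\"obius inversion is needed. For a fixed Gaussian denominator $q$, the Siegel constraint $\norm{r}^2 = 2\Re(p\bar q)$ fixes one real degree of freedom of $p\in\Z[\ii]$, and the bound $\norm{p/q}^{1/2}\le R$ then confines $p$ to a lattice line segment containing $O(1)$ points (up to $\gcd(\Re q,\Im q)$), while $r$ ranges over $\asymp \norm{q}^2$ Gaussian integers. So even the naive (not lowest-terms) count of pairs $(r,p)$ for a single $q$ is $\asymp \norm{q}^2$ --- consistent with Lemma \ref{lem:rationalcountbound}, noting that $\phi(q)\asymp N(q)$ on average, so restricting to lowest terms changes nothing at the level of orders of magnitude. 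Multiplying by the $\asymp K$ Gaussian integers in the width-one annulus gives $K^3$ at once. Your claimed ``$K^4$'' is a dyadic annulus count, and plugging it into a dyadic Borel--Cantelli sum still yields the threshold $\alpha>1$, not $5/4$; the weaker bound you mention only appears if one mixes a dyadic count with an integer-indexed sum. The structural reason the Siegel exponent drops below the Carnot value $(Q+1)/Q$ is not lowest terms but that a Gaussian $q$ carries two real parameters yet supports only $\asymp N(q)$ bounded rationals (one power of $N(q)$ absorbed by the Siegel constraint), whereas a Carnot denominator $q\in\N$ supports the full $q^Q$.

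Your ``alternative'' via the best-approximation property plus growth bounds on $\norm{q_{n+1}}/\norm{q_n}$ would, as stated, require a quantitative a.e.\ lower bound on the sizes $\norm{v_n}$ (equivalently a law of large numbers for the digits), which the paper explicitly flags as unavailable: the ergodicity of the Heisenberg Gauss map is not known, and Corollary \ref{thm:weirdergodic} --- which here is a \emph{consequence} of the Diophantine theorem, not an ingredient --- shows that $\norm{v_n}$ is almost surely not bounded away from $0$. So the Borel--Cantelli route is not merely the paper's choice but currently the only one available.
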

\begin{proof}
Consider the continued fraction expansion of an irrational point $h$ in $\Sieg^1$ (i.e.\ a point in $\Sieg^1\backslash \Sieg^1(\Q)$). The convergence rate of the approximates is given \cite{VDiophantine} by
\begin{equation}
\label{eq:boundeddistancesiegel}
d\left( \left( \frac{r_n}{q_n}, \frac{p_n}{q_n} \right), h\right) \asymp \frac{|v_{n+1}|^{1/2}}{|q_n|}.
\end{equation}
Thus, continued fractions convergents give infinitely many approximations approximations of $h$ with error on the order of $\norm{q}^{-1}$.

On the other hand, a Borel-Cantelli argument (cf.\ Theorem \ref{thm:GeometricUpperBound}) in \cite{VDiophantine} shows that the exponent cannot be improved.
\end{proof}

\begin{remark}
Because \eqref{eq:boundeddistancesiegel} holds for all points $h$, Theorem \ref{thm:sieg1critexp} provides a lower bound on the irrationality exponent for \emph{all} points in $(\Heis^1, \height_{\Sieg^1})$.
\end{remark}

We will also need to know that continued fractions give (essentially) best approximates:
\begin{prop}\label{prop:bestapproximation}
Let $h\in \Sieg^1$ be an irrational point with $n^{\text{th}}$ convergent $(r_n/q_n,p_n/q_n)$. There exists a constant $M>0$, independent of $h$, such that if $(R/Q,P/Q)$ is a different rational point in lowest terms with $|Q|< M |q_n|$, then 
\(
d\left( \left( \frac{R}{Q}, \frac{P}{Q}\right), h \right) > d\left( \left( \frac{r_n}{q_n}, \frac{p_n}{q_n} \right), h \right).
\)
\end{prop}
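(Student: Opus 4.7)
My plan is to argue by contrapositive. Suppose $(R/Q, P/Q) \in \Sieg^1(\Q)$ is in lowest terms, is distinct from the $n$-th convergent $(r_n/q_n, p_n/q_n)$, and satisfies $d((R/Q, P/Q), h) \leq d((r_n/q_n, p_n/q_n), h)$; I will deduce $|Q| \geq M |q_n|$ for an absolute constant $M > 0$. By the triangle inequality,
\begin{equation*}
d\bigl((R/Q, P/Q),\, (r_n/q_n, p_n/q_n)\bigr) \leq 2\, d\bigl((r_n/q_n, p_n/q_n),\, h\bigr),
\end{equation*}
and \eqref{eq:boundeddistancesiegel} bounds the right-hand side by $O(|v_{n+1}|^{1/2}/|q_n|)$. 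Since $h_{n+1}$ lies in the bounded Dirichlet region $K$ for $\Sieg^1(\Z)$, the vertical component $|v_{n+1}|$ is uniformly bounded, so the distance between the two rationals is at most $C_1/|q_n|$ for an absolute $C_1$.

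The heart of the argument is a rational separation lemma: any two distinct points $(r/q, p/q), (R/Q, P/Q) \in \Sieg^1(\Q)$ in lowest terms satisfy $d((r/q, p/q), (R/Q, P/Q)) \geq C_2/\sqrt{|q||Q|}$. To establish this, I would compute the Siegel product $A^{-1} * B$ explicitly using the group law; its $v$-coordinate can be written as a Gaussian-integer numerator divided by a Gaussian integer of norm $|q||Q|$. Since $A \neq B$ and both lie on the paraboloid $2\Re(v) = |u|^2$, the product is a nontrivial Siegel element, forcing $v \neq 0$ and hence the numerator to have absolute value $\geq 1$. This gives $|v| \geq 1/(|q||Q|)$ and $d = |v|^{1/2} \geq 1/\sqrt{|q||Q|}$. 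The degenerate case $R/Q = r/q$ -- in which lowest terms forces $(R, Q) = \lambda(r, q)$ for a unit $\lambda \in \Z[\ii]^\times$ -- must be handled separately: the paraboloid then forces $v$ to be purely imaginary, and a parallel Gaussian-integer argument on its numerator yields the same bound.

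Combining these gives $C_2/\sqrt{|Q||q_n|} \leq 2 C_1/|q_n|$, hence $|Q| \geq (C_2/(2C_1))^2 |q_n|$, so one takes $M = (C_2/(2C_1))^2$. The main obstacle is the separation lemma itself: a naive bound from the horizontal distance alone would give only $d \gtrsim 1/(|q||Q|)$, which is too weak to produce a factor $|q_n|$ in the conclusion. Exploiting the paraboloid condition is what upgrades the separation to $1/\sqrt{|q||Q|}$ (matching the Heisenberg Hausdorff dimension $4$), and verifying this cleanly across both the generic and degenerate cases is the most delicate piece.
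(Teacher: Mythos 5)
Since the paper states Proposition \ref{prop:bestapproximation} without giving a proof (it is presumably taken from the authors' earlier work \cite{VDiophantine} or \cite{LV}), I assess your argument on its own terms, and I find it correct and complete in its essentials.

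Your outline is sound. The triangle inequality together with \eqref{eq:boundeddistancesiegel} and the boundedness of $|v_{n+1}|$ (because $h_{n+1}$ lies in the Dirichlet region $K$, which the paper notes is contained in the unit ball) gives $d\bigl((R/Q,P/Q),(r_n/q_n,p_n/q_n)\bigr) \leq C_1/|q_n|$. The separation lemma you propose is exactly the computation the paper itself carries out later, inside the proof of Theorem \ref{thm:winning} (Lemma \ref{lemma:SchmidtUniqueness}): for distinct rationals $(r/q,p/q)$ and $(r'/q',p'/q')$ in $\Sieg^1$,
\[
d\Bigl(\bigl(\tfrac{r}{q},\tfrac{p}{q}\bigr),\bigl(\tfrac{r'}{q'},\tfrac{p'}{q'}\bigr)\Bigr)
= \frac{\bigl|\overline{p}q'+p'\overline{q}-\overline{r}r'\bigr|^{1/2}}{|qq'|^{1/2}} \geq \frac{1}{|qq'|^{1/2}},
\]
the last step because the numerator is a nonzero Gaussian integer whenever the two points are distinct. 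Combining yields $|Q| \geq (C_2/(2C_1))^2\,|q_n|$, which is the contrapositive of the proposition.

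One simplification you should notice: your worry about a ``degenerate case $R/Q = r/q$'' is unnecessary. The general argument already shows that distinctness of the two rational points forces the numerator $\overline{p}q'+p'\overline{q}-\overline{r}r'$ to be nonzero, because $v = 0$ in $\Sieg^1$ forces $u = 0$ via the paraboloid condition $|u|^2 = 2\Re(v)$. There is no parallel case to treat; the bound $1/\sqrt{|q||Q|}$ comes out uniformly. Note also that the separation lemma itself does not require lowest-terms denominators; lowest terms is only needed so that $|Q|$ is the Siegel height of $(R/Q,P/Q)$, which is what makes the comparison $|Q| < M|q_n|$ meaningful.
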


\subsection{Badly approximable points and bounded continued fraction digits}
\label{sec:BDBA}
We are now in position to prove that $\BA_{\Sieg^1}=\BD$.
\begin{thm}\label{thm:BAisBD}
Let $h$ be an irrational point of $\Sieg^1$. Then $h$ is badly approximable---that is, there exists a constant $C_h>0$ such that for \emph{all} rational points $(R/Q,P/Q)$ we have that 
\[\label{eq:BAisBD}
d\left( \left( \frac{R}{Q},\frac{P}{Q}\right), h\right) > \frac{C_h}{|Q|}
\]
--- if and only if the continued fraction digits of $h$ are bounded.
\end{thm}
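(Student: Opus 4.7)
The plan is to mimic the classical real-line proof that $x\in\BA_\R$ if and only if the partial quotients of the regular continued fraction expansion of $x$ are bounded. Three ingredients are already in hand and play the roles of their one-dimensional analogues: the sharp estimate \eqref{eq:boundeddistancesiegel} for the convergence rate of the Heisenberg convergents, the best-approximation Proposition~\ref{prop:bestapproximation}, and the recursive identity $\iota(h_n)=\gamma_{n+1}*h_{n+1}$ inherent in the Gauss map $T$.

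\textbf{Key lemmas.} I would first record two two-sided comparisons: (i) $|v_{n+1}|\asymp(1+\|\gamma_{n+2}\|^2)^{-1}$, and (ii) $|q_{n+1}|$ is comparable (up to a bounded multiplicative factor and an additive contribution from $|q_{n-1}|$) to $\|\gamma_{n+1}\|^{2}\cdot|q_n|$, or a similar polynomial in $\|\gamma_{n+1}\|$. For (i), applying $\iota$ to $\iota(h_{n+1})=\gamma_{n+2}*h_{n+2}$ and extracting $v$-coordinates via the Siegel product law yields $1/v_{n+1}=v(\gamma_{n+2})+v(h_{n+2})+2\,\overline{u(\gamma_{n+2})}\,u(h_{n+2})$. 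Since $h_{n+2}\in K$ is bounded while $|v(\gamma_{n+2})|=\|\gamma_{n+2}\|^2$ dominates the cross term, which is of order $\|\gamma_{n+2}\|$, one obtains (i) after separately disposing of the regime of small $\|\gamma_{n+2}\|$ by direct compactness of $K$. For (ii), I would appeal to the M\"obius-type matrix recurrence obtained by iterating the $SL(3,\C)$-representation of $\iota$ and left-translations used in \cite{LV}. Combining (i) and (ii) with \eqref{eq:boundeddistancesiegel} gives the central identity
\[
d_n := d\!\left(\bigl(\tfrac{r_n}{q_n},\tfrac{p_n}{q_n}\bigr),h\right)\;\asymp\;\frac{1}{|q_n|\cdot(1+\|\gamma_{n+2}\|)}.
\]

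\textbf{Forward direction} ($\BD\Rightarrow\BA_{\Sieg^1}$). If $\|\gamma_i\|\leq D$ for all $i$, then $d_n\gtrsim 1/(D|q_n|)$ and $|q_{n+1}|\leq C_D|q_n|$. For an arbitrary rational $(R/Q,P/Q)\neq(r_n/q_n,p_n/q_n)$ in lowest terms, pick the unique $n$ with $M|q_{n-1}|\leq|Q|<M|q_n|$ (for $M$ from Proposition~\ref{prop:bestapproximation}); that proposition combined with $|q_n|\leq C_D|q_{n-1}|\leq(C_D/M)|Q|$ yields $d((R/Q,P/Q),h)>d_n\gtrsim 1/|q_n|\gtrsim 1/|Q|$, uniformly in $Q$.

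\textbf{Reverse direction and main obstacle.} Argue contrapositively: if $\|\gamma_{n_k+1}\|\to\infty$ along some subsequence, then by (i) $|v_{n_k}|\to 0$, so $d_{n_k-1}\cdot|q_{n_k-1}|\asymp|v_{n_k}|^{1/2}\to 0$, which falsifies \eqref{eq:BAisBD} for any positive $C_h$. The principal technical nuisance is the uniform verification of comparison (i): the cross term $2\,\overline{u(\gamma_{n+2})}u(h_{n+2})$ is genuinely negligible against the dominant $\|\gamma_{n+2}\|^2$ only when $\|\gamma_{n+2}\|$ is large, so the bounded-digit regime must be handled separately via compactness of $K$. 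Once (i) and the denominator recursion (ii) are in place, the rest is parallel to the classical one-dimensional bookkeeping.
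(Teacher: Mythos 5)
Your proposal is correct and follows essentially the same route as the paper: in both, the key observation is that $\|\gamma_{n+2}\|$ being bounded is equivalent to $|v_{n+1}|$ being bounded away from $0$, which combined with \eqref{eq:boundeddistancesiegel} gives the reverse implication immediately, while the forward implication uses Proposition~\ref{prop:bestapproximation} together with the comparability $|q_{n-1}|\asymp|q_n|$ to reduce an arbitrary rational $(R/Q,P/Q)$ to the nearby convergent. The only cosmetic difference is that you propose to re-derive the denominator recursion $|q_{n-1}|\asymp |v_{n-1}q_n|$ from the $SL(3,\C)$ matrix picture, whereas the paper simply cites it from \cite{VDiophantine}; also note that the index $n$ chosen for a given $Q$ need not be unique (the $|q_n|$ are not monotone), but this does not affect the argument.
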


\begin{proof}
The key idea is the relationship between the forward iterates $h_n$ and the continued fraction digits $\gamma_n$. We know that $\gamma_n = [\iota h_{n-1}]$. Since $\Norm{\iota h} =\Norm{h}^{-1}$, we have that $|\gamma_n|$ is bounded if and only if the sequence $h_n$ (and thus $v_n$) is bounded away from the origin. 

So in one direction the proof is immediate. If the CF digits of $h$ are unbounded, then $v_n$ will be arbitrary close to $0$, but then by \eqref{eq:boundeddistancesiegel}, we have that $h$ cannot be badly approximable since the convergents themselves will make \eqref{eq:BAisBD} false no matter how small $C_h$ is.

For the other direction we must be slightly more careful. Suppose that $h$ has bounded continued fraction digits. Then we know that $|v_n|$ is bounded away from $0$ and we know it is bounded from above by $1$, since $K$ is contained in the unit sphere, so that $|v_n|\asymp 1$. Thus since $|q_{n-1}| \asymp |v_{n-1} \cdot q_n|$ (proved in \cite{VDiophantine}), we have that $|q_{n-1}|\asymp |q_n|$.

Let $(R/Q,P/Q)$ be a rational point. Let $M$ be the constant in Proposition \ref{prop:bestapproximation}. Without loss of generality we may assume that $Q$ is large, so that there exists an $n$ with $M|q_{n-1}|\le |Q| <M |q_n|$. (The $n$ may not be unique since the norm of the $q_n$'s does not need to be monotone increasing.) Since $|q_{n-1}| \asymp |q_n|$ we also have that $|Q| \asymp |q_n|$.

Thus we have that
\(
d\left( \left( \frac{R}{Q},\frac{P}{Q}\right), h \right) &> d\left( \left( \frac{r_n}{q_n} , \frac{p_n}{q_n}\right), h\right) = \frac{\norm{v_{n+1}}^{1/2}}{\norm{q_n}} \gg \frac{1}{\norm{q_n}} \gg \frac{1}{\norm{Q}}.
\)
Since the implicit constant does not depend upon $Q$ in any way, we have shown that $h$ is badly approximable in this case.
\end{proof}

Combining with Theorem \ref{thm:BAisBD}, we will obtain:
\begin{cor}\label{thm:weirdergodic}
Almost all $h\in \Sieg^1$ have arbitrarily large continued fraction digits and arbitrarily small $h_i$.
\end{cor}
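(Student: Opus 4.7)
The plan is to translate the corollary into a statement about badly approximable points and then show that set has measure zero. Since $\gamma_n = [\iota h_{n-1}]$ and $\Norm{\iota h} = \Norm{h}^{-1}$, one has $|\gamma_n| \asymp \Norm{h_{n-1}}^{-1}$, so ``arbitrarily large digits'' and ``arbitrarily small $h_i$'' are logically equivalent; it suffices to prove either one. By Theorem \ref{thm:BAisBD}, an irrational $h$ has all continued fraction digits bounded if and only if $h \in \BA_{\Sieg^1}$, and rational points form a Lebesgue null set, so the corollary reduces to showing that $\BA_{\Sieg^1}$ has Lebesgue measure zero.

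For this I would strengthen Theorem \ref{thm:sieg1critexp} to accommodate an arbitrary multiplicative constant: for every $C > 0$ and Lebesgue-a.e.\ $h \in \Sieg^1$, the inequality $d(h, q^{-1}\vec p) < C\,|q|^{-1}$ admits infinitely many Gaussian-rational solutions. Granting this, the complement of $\BA_{\Sieg^1}$ contains a countable intersection (say over $C = 1/n$) of full-measure sets and is therefore itself of full measure, as required.

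The strengthening is a Borel--Cantelli/Harman argument parallel to Theorem \ref{thm:GeometricLowerBound} but transported into the Siegel counting framework used for Theorem \ref{thm:SiegelIntro}. Define $\mathcal{E}_q(C)$ to be the set of $h$ in a fundamental domain for $\Sieg^1(\Z)$ admitting an approximant of denominator $q$ within $C/|q|$; a direct count should give $|\mathcal{E}_q(C)| \asymp C^4 |q|^{-1}$, so $\sum_{q \in \Z[\ii]} |\mathcal{E}_q(C)| = \infty$ for every $C$. Applying Lemma \ref{lemma:HarmanAlternate} then requires controlling the correlation sum $\sum_{q,q'}\bigl(|\mathcal{E}_q(C) \cap \mathcal{E}_{q'}(C)| - |\mathcal{E}_q(C)|\cdot|\mathcal{E}_{q'}(C)|\bigr)$ --- the Siegel mirror of the combinatorial bookkeeping in the proof of Theorem \ref{thm:GeometricLowerBound}, using a Gaussian-integer analogue of Lemma \ref{lemma:LinearEquationCount} together with the divisor bound $d(m) = O(m^\epsilon)$.

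The main obstacle is bookkeeping rather than new mathematics: the correlation estimate mirrors one the paper has already executed in the Carnot case, though care is needed because Gaussian integers replace rational integers and because the Siegel metric on $\Sieg^1 \subset \C^2$ interacts slightly differently with the counting than the weighted infinity metric does in a Carnot group. A cleaner alternative, if ergodicity of the Heisenberg Gauss map $T$ on $K$ with respect to a Lebesgue-equivalent invariant measure is available from \cite{LV, VDiophantine}, would be to observe that Lebesgue-a.e.\ $T$-orbit visits every neighborhood of $0$, yielding $\liminf_n \Norm{h_n} = 0$ directly without any Diophantine counting.
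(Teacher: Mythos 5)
Your overall plan is the right one and matches the paper's intent: reduce via Theorem \ref{thm:BAisBD} to showing $\BA_{\Sieg^1}$ has Lebesgue measure zero, observe that the complement of $\BA_{\Sieg^1}$ is the countable intersection over $C=1/n$ of the sets where $d(h,q^{-1}\vec p)<C/|q|$ has infinitely many solutions, and show each such set has full measure via a Harman-type overlap estimate. The equivalence between ``arbitrarily large digits'' and ``arbitrarily small $h_i$'' via $|\gamma_n|\asymp\Norm{h_{n-1}}^{-1}$ is also correct.

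Where you diverge from the paper is in the choice of Harman lemma, and the divergence matters. You reach for Lemma \ref{lemma:HarmanAlternate}, the strong version producing a precise asymptotic with error term; that requires a genuine variance bound \eqref{eq:IntAsympEstimate}, which in the Siegel setting needs a sharper rational count than Lemma \ref{lem:rationalcountbound} provides (the paper explicitly flags this as open in Question \ref{q1} and in \S\ref{sec:moredetailsonquestions}). The paper instead uses the weaker Lemma \ref{lemma:Harman}, which only gives \emph{positive} measure, and then upgrades to \emph{full} measure with the zero-one law Lemma \ref{lem:SiegZO}. Since only ``almost all'' is needed for the corollary, the weaker tool plus zero-one law is strictly easier and is already proven in the paper as Theorem \ref{thm:sieg1lowerbound}; you do not need to re-derive or strengthen anything. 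Your sketch also omits any zero-one argument, which is essential if one stays with Lemma \ref{lemma:Harman}.

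Two smaller remarks. First, your count $|\mathcal{E}_q(C)|\asymp C^4|q|^{-1}$ is off: following Lemma \ref{lem:rationalcountbound}, the correct order is $\phi(q)\,C^4|q|^{-4}$, so the sum over Gaussian $q$ with $|q|\le K$ grows like $\log K$ (Proposition \ref{prop:phisumasymp}), not like $K$. The divergence conclusion survives, but the claimed rate is wrong and would undermine the correlation bookkeeping if you carried it through. Second, the ergodicity alternative you flag as conditional is in fact unavailable: the paper states explicitly, immediately after the corollary, that ergodicity of the Heisenberg Gauss map is not known, so that path cannot be used here.
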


Such a result would normally be a simple consequence of the ergodicity of the continued fraction map, but this  is not known in the Heisenberg group.

\subsection{Badly approximable points and bounded geodesics}\label{sec:horospheres}
\label{sec:DisBA}

Our goal is now to show that the set of badly approximable points $\BA_{\Sieg^1}$ coincides with the Diophantine set $D(\Gamma)$.

Recall that $D(\Gamma)$ is defined as the set of endpoints of geodesic rays in $\Hyp^2_\C$ that stay in a bounded region of the cusped manifold $\Gamma\backslash \Hyp^2_\C$. That the cusp of $\Gamma\backslash \Hyp^2_\C$ lifts to a $\Gamma$-invariant collection of horoballs in $\Hyp^2_\C$, so $h$ is in $D(\Gamma)$ if and only if  some (equivalently any) geodesic ray ending in $h$ misses some $\Gamma$-invariant collection of horoballs.

Horospheres in $\Hyp^2_\C$ are analogous to those in $\Hyp^2_\R$. Namely, a horosphere with basepoint $\infty$ and (horo-)height $s>0$ is composed exactly of points $\{(a,b+s^2)\st 2\Re(b)=\norm{a}^2\}$ (see \cite{MR1189043} where horospherical coordinates were first introduced). In particular, any point $(u,v)\in \Hyp^2_\C$ is contained in a horosphere based at infinity with height
\begin{equation}
\label{eq:horoheight}
s^2 = \Re(v-\norm{u}^2/2).
\end{equation}
All other horospheres can be obtained by applying elements of $U(2,1)$ to a horosphere based at $\infty$. While a horosphere based at infinity has constant horoheight, while the horoheights of points in a horosphere based at a finite point vary up to some maximum value which we call the (horo-)height of the horosphere. A \emph{horoball} of height $s_0$ based at infinity is defined by the condition $s\geq s_0$. Horoballs elsewhere are defined by the $U(2,1)$ action.

The \emph{shadow} of a horoball in $\Hyp^2_\C$ is the set of points in $\Sieg^1$ occluded by the horosphere. That is, it is the collection of endpoints of vertical geodesic rays that intersect the horosphere. Here, a \emph{vertical} geodesic ray has the form $\xi(t) = (u, v+e^{-2t})$ for $t\in [a,\infty)$ and $(u,v)\in \Sieg^1$. The constant $2$ in the exponent is due to the standard normalization that makes $\Hyp^2_\C$ a CAT(-1) space. 

We now record some facts about horoballs.  The following is straightforward:
\begin{lemma}
\label{lemma:fliphorosphere}
Let $H$ be a horosphere based at $\infty$ at height $s$, and $\iota$ the Koranyi inversion. Then $\iota(H)$ is a horosphere based at 0 with height $1/s$.
\end{lemma}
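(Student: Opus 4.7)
The plan is a short direct computation with the Koranyi inversion $\iota\colon (u,v) \mapsto (-u/v, 1/v)$ and the horoheight function of equation~\eqref{eq:horoheight}. First I would note that $\iota$ is one of the generators of $U(2,1;\Z[\ii])$ listed in \S\ref{sec:coredefinitions} and so acts as an isometry of $\Hyp^2_\C$. A direct limit argument in coordinates shows that the induced boundary homeomorphism exchanges $0$ and $\infty$ (for instance, $(0, s^2 + \ii T) \mapsto (0, 1/(s^2 + \ii T)) \to 0$ as $T \to \infty$). Since an isometry of a $\mathrm{CAT}(-1)$ space carries horospheres to horospheres while conjugating basepoints by the induced boundary map, $\iota(H)$ is automatically a horosphere based at $0$.

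It remains to identify the height. By the convention preceding the lemma, the height of a horosphere based at a finite point is the supremum of the $\infty$-based horoheights of its points. For $(u,v) \in H$ we have $\Re(v) - \norm{u}^2/2 = s^2$, and a direct computation with $(u',v') = \iota(u,v)$ and the identity $\Re(1/v) = \Re(v)/\norm{v}^2$ yields
\[
\Re(v') - \tfrac{1}{2}\norm{u'}^2 \;=\; \Re(1/v) - \frac{\norm{u}^2}{2\norm{v}^2} \;=\; \frac{\Re(v) - \norm{u}^2/2}{\norm{v}^2} \;=\; \frac{s^2}{\norm{v}^2}.
\]
Thus the height of $\iota(H)$ equals $\sup_{(u,v) \in H} s/\norm{v}$.

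Finally I would carry out the maximization: writing $v = a + \ii b$ with $a = \Re(v) = \norm{u}^2/2 + s^2 \ge s^2$, one gets $\norm{v}^2 = a^2 + b^2 \ge s^4$, with equality attained at the point $(u,v) = (0, s^2) \in H$ (whose image under $\iota$ is $(0, 1/s^2) \in \iota(H)$). Hence the supremum is $s/s^2 = 1/s$, as claimed. I do not anticipate any genuine obstacle; the only subtlety is a normalization issue, namely that what \eqref{eq:horoheight} calls $s^2$ is the square of the ``height'' $s$ appearing in the lemma, so an extremal horoheight-value of $1/s^2$ translates to the height $1/s$ given in the statement.
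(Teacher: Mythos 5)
Your proof is correct. The paper itself offers no argument for Lemma \ref{lemma:fliphorosphere} (it is prefaced only by ``The following is straightforward''), and what you have written is the natural computation one would give: use the horoheight formula \eqref{eq:horoheight}, apply the identity $\Re(1/v)=\Re(v)/\norm{v}^2$ to get horoheight $s/\norm{v}$ at $\iota(u,v)$, and maximize over $H$ using $\Re(v)=\norm{u}^2/2+s^2\ge s^2$, with the extremum at $(0,s^2)$. Two small remarks, neither of which is a gap: the sequence $(0,s^2+\ii T)$ you use to track the basepoint lies on $H$ itself rather than on $\partial\Hyp^2_\C$, which is fine since a sequence on a horosphere that leaves every compact set converges to the basepoint; and the fact that $\iota(H)$ is a priori a horosphere is indeed justified by $\iota\in U(2,1)$ acting isometrically on $\Hyp^2_\C$, so your direct height computation is not secretly doing double duty as a proof that the image is a horosphere.
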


\begin{lemma}
Let $B$ be a horoball based at a point $h\in \Sieg^1$, of height $s$. Then the shadow of $B$ is comparable to a ball of radius $s$ based at $h$.
\begin{proof}
Left translation by $h^{-1}$ acts by isometries on both $\Sieg^1$ and $\Hyp^2_\C$ while preserving horoheight and taking vertical geodesics to vertical geodesics. It therefore suffices to consider $h=0$. Likewise, the mapping $\delta_{s}^{-1}(u,v)=(s^{-1}u, s^{-2}v)$ reduces us to the case of a horoball of height $1$. Lastly, it is easy to show using the Koranyi inversion that the shadow of the horoball based at $h=0$ of height $1$ is both bounded and contains an open set around the point $h=0$. Thus, its shadow is contained in an open ball of some radius $R$ and contains a ball of radius $R^{-1}$.
\end{proof}
\end{lemma}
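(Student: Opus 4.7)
The plan is to normalize the horoball by symmetries until it is a standard horoball centered at $0$ with height $1$, then compute the shadow explicitly in that case and transfer the result back.

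First, I would exploit left-translation. The parabolic subgroup of $U(2,1)$ fixing $\infty$ acts simply transitively on $\Sieg^1$, by isometries of both $\Sieg^1$ (in the Koranyi metric) and $\Hyp^2_\C$, while preserving horoheight and mapping vertical rays to vertical rays. Left-translation by $h^{-1}$ therefore sends $B$ to a horoball based at $0$ of the same height $s$, and sends balls of radius $s$ around $h$ to balls of radius $s$ around $0$; moreover, the shadow operation commutes with the action. So we may assume $h=0$.

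Next I would rescale. Applying the dilation $\delta_{1/s}$ extends to an automorphism of $\Hyp^2_\C$ that scales horoheight and the Koranyi metric on $\Sieg^1$ uniformly by $1/s$, takes vertical geodesics to vertical geodesics, and preserves the basepoint $0$. It therefore sends $B$ to a horoball at $0$ of height $1$ and sends the shadow of $B$ to the shadow of the new horoball. So it suffices to treat $h=0$, $s=1$.

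Finally, for the base case I would use the Koranyi inversion $\iota$ together with Lemma~\ref{lemma:fliphorosphere}: the horoball $B_0$ at $0$ of height $1$ equals $\iota(B_\infty)$, where $B_\infty=\{(u',v'):2\Re(v')-|u'|^2\ge 2\}$. A vertical ray $(u,v+\epsilon)$, $\epsilon>0$, ending at $(u,v)\in\Sieg^1$ meets $B_0$ iff $\iota(u,v+\epsilon)\in B_\infty$. Applying $\iota$ and clearing denominators, and using $2\Re(v)=|u|^2$, this condition reduces to the elementary inequality $\epsilon(1-2\Re(v)-\epsilon)\ge |v|^2$ in $\epsilon$, which is solvable iff the discriminant is non-negative, giving a quadratic condition in the variables $|u|$ and $\Im(v)$. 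From this inequality one reads off that the shadow is an explicit bounded subset of $\Sieg^1$ containing an open neighborhood of $0$. Since both balls of radius $r$ around $0$ and this shadow are determined by polynomial inequalities of comparable orders in $|u|$ and $\Im(v)$, the shadow lies in some Koranyi ball of radius $R$ around $0$ and contains one of radius $R^{-1}$, as required.

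The main obstacle is the base-case computation: making sure the explicit description of the shadow obtained from $\iota$ is correctly simplified and compared to a Koranyi ball, since the latter is described by $|u|^4/4+\Im(v)^2\le r^4$ while the shadow lands naturally as a sublevel set of a different homogeneous combination of $|u|$ and $\Im(v)$. All of the geometric ingredients—isometric action of parabolics, scaling behavior of dilations, and the swap $0\leftrightarrow\infty$ under $\iota$—are already in place, so the remaining work is genuinely a finite computation.
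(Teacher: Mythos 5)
Your proposal follows the paper's proof exactly: reduce to $h=0$ by left-translation, reduce to height $1$ by the dilation $\delta_{1/s}$, then use the Koranyi inversion to show the shadow of the unit horoball at the origin is bounded and contains a neighborhood of $0$. The only difference is that you carry the base-case computation a bit further, deriving the explicit quadratic condition $\epsilon(1-|u|^2-\epsilon)\geq|v|^2$ (which simplifies to the shadow being $\{2|u|^2+4\Im(v)^2\leq 1\}$); the paper leaves this step as ``it is easy to show.'' Both are correct, and your added detail is a fine way to discharge that claim.
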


\begin{lemma}
\label{lemma:horopositions}
Let $H$ be a horoball of height $s$ based at a point $(u,v)\in \Sieg^1$. Then $\iota(H)$ is a horoball based at $(-u/v, 1/v)$ of height $s\Norm{(u,v)}^{-2}=s\norm{v}^{-1}$. 
\begin{proof}
Since $H$ is based at $(u,v)$, it contains the point $(u,v+s^2)$. Notice that if we invert $H$, translate it so that it is based at $0$ (but still has the same height), and invert it again, we get a horoball based at $\infty$. The point $\iota(\iota(u,v)^{-1}*\iota(u,v+s^2))$ will then be in this horoball, and we will be able to use \eqref{eq:horoheight} to calculate its height. We finish by applying Lemma \ref{lemma:fliphorosphere}.

We now compute (using the fact that $2\Re(v)=\norm{u}^2)$:
\begin{align*}
\iota( \iota(u,v)^{-1}*\iota(u,v+s^2)) &= \iota \left(\left(-\frac{u}{v}, \frac{1}{v}\right)^{-1}*\left(\frac{-u}{v+s^2}, \frac{1}{v+s^2}\right)\right)\\
&= \iota \left(
\frac{-u}{v+s^2}+\frac{u}{v}, \frac{1}{\overline v} + \frac{\overline u}{\overline v}\frac{-u}{v+s^2}+\frac{1}{v+s^2}\right)\\
&=\iota \left(\frac{us^2}{v(v+s^2)}, \frac{s^2+2 \Re(v)-\norm{u}^2}{\overline{v}(v+s^2)}\right)\\
&= \iota \left(\frac{us^2}{v(v+s^2)}, \frac{s^2}{\overline{v}(v+s^2)}\right)\\
&= \left ( \frac{-u \overline v}{v}, \frac{\overline v (v+s^2)}{s^2}\right).
\end{align*}
The last line gives us the coordinates of a point in a horosphere based at $\infty$. We compute its height using \eqref{eq:horoheight}:
$$\Re\left(\frac{\overline v (v+s^2)}{s^2}- \frac{1}{2}\norm{
\frac{u \overline v}{v}}^2\right)= \Re\left( \frac{\norm{v}^2}{s^2} + \overline v - \frac{1}{2}\norm{u}^2\right)
=\frac{\norm{v}^2}{s^2},$$

where the last equality follows from the condition $2\Re(v)=\norm{u}^2$. Applying Lemma \ref{lemma:fliphorosphere} completes the proof.
\end{proof}
\end{lemma}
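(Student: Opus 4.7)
The Koranyi inversion $\iota$ extends from $\Sieg^1\cup\{\infty\}$ to an isometry of $\Hyp^2_\C$, and therefore automatically carries horoballs to horoballs, with basepoints transforming via the same boundary formula $(u,v)\mapsto(-u/v,1/v)$. The statement that $\iota(H)$ is a horoball based at $(-u/v,1/v)$ is thus essentially free; all of the content lies in the height computation.

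The plan is to transport $\iota(H)$ to a horoball based at $\infty$, where the height can be read off coordinate-wise from \eqref{eq:horoheight}. I would begin by exhibiting a distinguished point on $\partial H$: since $H$ is based at $(u,v)$ with height $s$, the ``vertical shift'' $(u,v+s^2)$ should lie on $\partial H$, as one checks directly from \eqref{eq:horoheight} using the Siegel relation $2\Re(v)=\norm{u}^2$. Applying $\iota$ then places $\iota(u,v+s^2)$ on $\partial \iota(H)$.

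Next, I would left-translate by $\iota(u,v)^{-1}$, which is a Heisenberg isometry that moves the basepoint of $\iota(H)$ from $\iota(u,v)$ to $0$ while preserving the height (because left translations fix $\infty$ and so preserve horoheight at $\infty$). Finally, applying $\iota$ a second time converts the resulting horoball based at $0$ into one based at $\infty$; by Lemma \ref{lemma:fliphorosphere}, the heights at $0$ and at $\infty$ are related by $t\leftrightarrow 1/t$. The composite point $\iota\bigl(\iota(u,v)^{-1}*\iota(u,v+s^2)\bigr)$ then lies on this final horosphere at $\infty$, so its height is simply $\Re(v'-\norm{u'}^2/2)^{1/2}$, with $(u',v')$ being that composite point.

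What remains is a direct algebraic expansion of the three-fold composition in Siegel coordinates. I expect the main obstacle to be pure bookkeeping: the expression for the $v'$-component has a superficially messy numerator $s^2+2\Re(v)-\norm{u}^2$, which collapses to $s^2$ only after the Siegel relation $2\Re(v)=\norm{u}^2$ is substituted at exactly the right step. Once executed, the horoheight at $\infty$ should evaluate to $\norm{v}/s$, and Lemma \ref{lemma:fliphorosphere} then inverts this into the claimed height $s/\norm{v}=s\Norm{(u,v)}^{-2}$ for $\iota(H)$ at $\iota(u,v)$, completing the proof.
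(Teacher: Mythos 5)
Your proposal is correct and follows essentially the same route as the paper: identify the apex $(u,v+s^2)\in\partial H$, transport it by $\iota(u,v)^{-1}*\,\cdot$ and an inversion to a horosphere based at $\infty$, read off the horoheight $\norm{v}/s$ from \eqref{eq:horoheight}, and dualize via Lemma \ref{lemma:fliphorosphere}. The only slight imprecision is the remark that $(u,v+s^2)\in\partial H$ ``as one checks directly from \eqref{eq:horoheight}'': that equation only gives the horoheight of the point with respect to $\infty$; to conclude it is the apex of the horoball $H$ based at the finite point $(u,v)$ you also need the geometric fact (used implicitly by the paper as well) that the maximal-horoheight point of such a horoball lies on the vertical geodesic above its basepoint.
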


We now fix a collection $\mathcal B$ of horoballs arising from the action of $\Gamma$ on $\Hyp^2_\C$. Because $\Gamma$ acts transitively on $\Sieg^1(\Q)\cup \{\infty\}$, we have that $\mathcal B$ contains a unique horoball based at each rational point. Furthermore, the height of the horoball $B_0\in \mathcal B$ based at $0$ completely determines $\mathcal B$. We now compute the heights of the remaining horoballs in $\mathcal B$:

\begin{lemma}
\label{lemma:horoheights}
Let $\mathcal B$ be as above, with the horoball $B_0$ based at the origin having horoheight $s_0$. If $(p/q, r/q)\in \Sieg$ is in lowest terms, then the horoball at $(p/q, r/q)$ has horoheight $s_0 \norm{q}^{-1}$.
\begin{proof}
We will approach the point $h:=(p/q, r/q)$ through a series of translations and inversions as prescribed by its continued fraction expansion. 

We use the continued fraction terminology of \S \ref{sec:CFalpha}. Applying left translation by an element of $\Sieg^1(\Z)$, we may assume without loss of generality that $h\in K$.  Denote the forward iterates $T^ih$ of $h$ by $T^i h = h_i=(u_i, v_i)$, including $(u_0, v_0)=(p/q, r/q)$. Theorem 3.10 of \cite{LV} guarantees that for some $n$ we have $(u_n, v_n)=(0,0)$, so that the associated continued fraction digit sequence $\{\gamma_1, \ldots, \gamma_n\}$ is finite. Since the Gauss map acts by a forward shift on the expansion, the continued fraction digits of $h_i=(u_i,v_i)$ are $\{\gamma_{i+1},\gamma_{i+2},\dots,\gamma_n\}$. 

Consider now a horoball $B_h$ based at $h=h_0=\iota (\gamma_1 * \iota (\gamma_2  \cdots \iota (\gamma_n *0)\cdots))$ with unknown horoheight $s$. Interpreting Heisenberg left multiplication and $\iota$ as elements of $\Gamma$ as in \cite{LV}, we have $B_h=\iota (\gamma_1 * \iota (\gamma_2  \cdots \iota (\gamma_n *B_0)\cdots))$.

Applying the Gauss map, we obtain the horoball $\gamma_1^{-1}*\iota(B_h)$ based at $h_1$. By Lemma \ref{lemma:horopositions}, the new horoball has height $s\Norm{h}^{-2}=s\norm{v_0}^{-1}$ (note that translation by $\gamma_1^{-1}$ does not affect horoheight). Applying the Gauss map again, we see a horoball of height $s\norm{v_0v_1}^{-1}$, and so on, until we see the horoball $B_0$ based at the origin. Since $B_0$ has height $s_0$, we have $s_0 = s\norm{v_0\cdots v_{n-1}}^{-1}$. Now, equation (3.4) of \cite{LV} in the case $(u_n, v_n)=(0,0)$ implies $|v_0\cdots v_{n-1}|^{-1}=|q|$ (the equivalence between our $q$ and the $q_n$ in the equation is due to $(p/q, r/q)$ being in lowest terms, so they can at most differ by a unit), so we have $s_0 = s\norm{q}$, as desired.
\end{proof}
\end{lemma}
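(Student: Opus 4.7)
The plan is to exploit the $\Gamma$-invariance of $\mathcal B$ together with the finite Heisenberg continued fraction expansion of $(p/q, r/q)$. Every rational point in $\Sieg^1$ terminates in finitely many Gauss-map steps, and each step is realized by elements of $\Gamma$: a Heisenberg integer translation followed by the Koranyi inversion $\iota$. Because $\Gamma$ preserves $\mathcal B$, we can chase the horoball based at $(p/q, r/q)$ through these transformations until we arrive at $B_0$, whose height $s_0$ is known.

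First, by left-translating through an element of $\Sieg^1(\Z)$, which is a parabolic isometry fixing $\infty$ and therefore preserves both horoheight and $\mathcal B$, we may assume $(p/q, r/q) \in K$. Write $(u_i, v_i) := T^i(p/q, r/q)$ for the Gauss-map iterates, so $(u_0, v_0) = (p/q, r/q)$ and $(u_n, v_n) = (0,0)$ for some $n$, the existence of $n$ being guaranteed by the fact from \cite{LV} that rationals have finite Heisenberg continued fraction expansions. Let $s$ denote the unknown horoheight of the horoball in $\mathcal B$ based at $(u_0, v_0)$.

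Each application of the Gauss map $T$ consists of a translation by some digit $\gamma_{i+1}^{-1} \in \Sieg^1(\Z)$ (horoheight-preserving, since it fixes $\infty$) followed by $\iota$. By Lemma \ref{lemma:horopositions}, $\iota$ sends a horoball of height $s'$ based at $(u,v)$ to one of height $s' \norm{v}^{-1}$. Iterating this along the Gauss orbit, the horoball at step $i$ has horoheight $s \cdot \norm{v_0 v_1 \cdots v_{i-1}}^{-1}$; at step $n$ we land at the horoball $B_0$ of height $s_0$, so $s_0 = s \cdot \norm{v_0 v_1 \cdots v_{n-1}}^{-1}$.

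The main obstacle is then the algebraic identity $\norm{v_0 v_1 \cdots v_{n-1}}^{-1} = \norm{q}$ linking the Gauss-map iterates to the convergent denominator. This is precisely the content of equation (3.4) of \cite{LV} specialized to a terminating expansion, and the coincidence between our $q$ and the convergent denominator $q_n$ is forced by the lowest-terms assumption (they can differ only by a unit in $\Z[\ii]$). Granting this identity, we obtain $s = s_0 \norm{q}^{-1}$, as claimed. Without citing the earlier paper one would have to set up the standard numerator/denominator recursion for Heisenberg continued fraction convergents, track how the Gauss map renormalizes the basepoint at each step, and verify the telescoping directly.
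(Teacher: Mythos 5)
Your proposal is correct and follows essentially the same route as the paper's own proof: reduce to $h\in K$ by an integer translation, chase the $\Gamma$-invariant horoball along the Gauss-map orbit using Lemma \ref{lemma:horopositions} to accumulate the factor $\norm{v_0\cdots v_{n-1}}^{-1}$, and then invoke equation (3.4) of \cite{LV} together with the lowest-terms hypothesis to identify that product with $\norm{q}$. The only cosmetic difference is that you spell out why integer left translations preserve horoheight and $\mathcal B$, which the paper treats as understood.
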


We can now complete the proof of:
\begin{thm}
A point $h\in \Sieg^1$ is badly approximable if and only if it is in the Diophantine set $D(\Gamma)$.
\begin{proof}
Suppose $h \in D(\Gamma)$. Let $\xi$ be vertical geodesic ray in $\Hyp^2_\C$ that terminates at $h$. In the quotient $\Gamma \backslash \Hyp^2_\C$, $\xi$ stays in a bounded region, so does not enter some part of the cusp. Lifting back to $\Hyp^2_\C$, this means that $\xi$ avoids a $\Gamma$-invariant collection of horoballs $\mathcal B$ (whose image in the quotient is the avoided cuspidal region). Let $C$ be the horoheight of the horoball based at 0. By Lemma \ref{lemma:horoheights}, each horoball $B\in \mathcal B$ whose base is represented in reduced form as $(p/q,r/q)$ has horoheight $C \norm{q}^{-1}$, and has a shadow that is comparable to a ball of radius $C \norm{q}^{-1}$ around the basepoint. Since $\xi$ misses $B$, it is not contained in its shadow. We conclude $d(h, (p/q,r/q))>C' \norm{q}^{-1}$ for some $C'>0$  dependent only on $C$.
The converse is analogous.
\end{proof}
\end{thm}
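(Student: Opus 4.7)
The plan is to translate each condition into a statement about where the vertical geodesic ray ending at $h$ intersects horoballs based at rational points of $\Sieg^1$, using Lemma~\ref{lemma:horoheights} to track how horoheights scale with $\norm{q}^{-1}$ and using the shadow lemma to convert shadows into metric balls. The key geometric fact is that, for $h' \ne h$, the vertical ray over $h$ with endpoint $h$ meets the horoball at $h'$ if and only if $h$ lies in that horoball's shadow. For the forward direction, assume $h\in D(\Gamma)$: some vertical ray $\xi$ ending at $h$ avoids a $\Gamma$-invariant collection $\mathcal B$, whose horoball at $0$ has some height $s_0$. Lemma~\ref{lemma:horoheights} gives, for each $(p/q,r/q)\in \Sieg^1(\Q)$ in lowest terms, a horoball of height $s_0/\norm{q}$ in $\mathcal B$ at that point, with shadow contained in a ball of radius $C's_0/\norm{q}$ for a comparability constant $C'$ independent of $q$. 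Since $\xi$ misses this horoball, $h$ must be outside its shadow, i.e. $d(h,(p/q,r/q)) > C's_0/\norm{q}$; this is uniform in $q$, so $h\in \BA_{\Sieg^1}$.

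For the converse, let $h\in \BA_{\Sieg^1}$ with constant $C_h$, and let $C'$ be the shadow-lemma comparability constant. Choose $s_0 \in (0, \min\{1, C_h/C'\})$ and form the $\Gamma$-invariant collection $\mathcal B$ whose horoball at $0$ has height $s_0$. By Lemma~\ref{lemma:horoheights}, the horoball at each $(p/q,r/q)$ in lowest terms has shadow of radius at most $C's_0/\norm{q} < C_h/\norm{q}$, and the badly approximable hypothesis $d(h,(p/q,r/q)) > C_h/\norm{q}$ puts $h$ outside every such shadow. Meanwhile, by Lemma~\ref{lemma:fliphorosphere} the horoball of $\mathcal B$ based at $\infty$ has horoheight $1/s_0 > 1$, so the vertical ray $\xi(t) = (u, v + e^{-2t})$ ending at $h = (u,v)$, started at $t=0$, has height $e^{-2t} \le 1 < 1/s_0$ and thus never enters the horoball at $\infty$ either. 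Hence $\xi$ avoids $\mathcal B$ entirely, certifying $h\in D(\Gamma)$.

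The forward direction is essentially bookkeeping in the shadow--horoheight dictionary; the real care goes into the converse. Shrinking $s_0$ to make every rational shadow small enough to miss $h$ simultaneously \emph{inflates} the horoball at infinity via the Koranyi inversion (Lemma~\ref{lemma:fliphorosphere}), so one must verify that the chosen vertical ray still starts below this inflated cusp. The condition $s_0<1$, together with the normalization $\xi(0)$ at height $1$, is exactly what makes this work, since then $e^{-2t}\le 1<1/s_0$ for all $t\ge 0$. Apart from balancing this single constraint, the proof reduces to invoking the uniform comparability constants supplied by the two lemmas above.
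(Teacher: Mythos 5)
Your proof follows the paper's route: translate $D(\Gamma)$ membership into a statement about a vertical geodesic ray missing a $\Gamma$-invariant collection $\mathcal B$ of horoballs, use Lemma~\ref{lemma:horoheights} to get the $s_0/\norm{q}$ scaling at each rational point, and convert horoball shadows to metric balls. You also spell out the converse, which the paper leaves as ``analogous,'' and you correctly isolate the balancing constraint $s_0<1$ needed there (so that a vertical ray launched at horoheight $1$ sits above every finite horoball yet below the horoball at $\infty$ of height $1/s_0$). That is exactly the point the paper's one-line ``analogous'' suppresses, so this is a useful elaboration.

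There is, however, a sign error in your forward direction. You write that the shadow of the horoball at $(p/q,r/q)$ is \emph{contained in} a ball of radius $C' s_0/\norm{q}$, and from $h$ being outside the shadow you deduce $d\bigl(h,(p/q,r/q)\bigr) > C' s_0/\norm{q}$. That implication goes the wrong way: being outside a set that sits inside a ball gives no lower bound on the distance to the center. What you need is the \emph{other} side of the comparability from the shadow lemma, namely that the shadow \emph{contains} a ball of radius $c\, s_0/\norm{q}$ for some fixed $c>0$; then $h$ lying outside the shadow forces $d\bigl(h,(p/q,r/q)\bigr) > c\, s_0/\norm{q}$. (In your converse you correctly use the containment direction you wrote --- shadow inside a ball --- to conclude $h$ is outside the shadow; the two directions are not interchangeable and each half of the proof needs its own.) There is also a point that both you and the paper pass over silently: the inference ``$\xi$ misses the horoball $B$, hence $h$ is not in the shadow of $B$'' requires $\xi$ to start \emph{above} $B$, which can fail for the finitely many rationals with small $\norm{q}$. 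This is repairable either by shrinking $s_0$ so that $s_0$ lies below the starting horoheight of $\xi$ (noting $\xi$ still avoids the enlarged horoball at $\infty$) or by absorbing the finitely many exceptions into the constant, but it deserves a sentence.
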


\section{Siegel model $\Sieg^n$}
\label{sec:Siegeln}
The goal of this section is to prove Theorem \ref{thm:SiegelIntro} for all $n$. Recall that we proved the case $n=1$ using continued fractions in \S \ref{sec:CFalpha}, and that results analogous to Theorem \ref{thm:SiegelIntro} follow from \cite{MR1919402}, see \S \ref{sec:otherapproaches}.

The Borel-Cantelli argument (cf.\ Theorem \ref{thm:GeometricUpperBound}) for $\Sieg^1$ given in \cite{VDiophantine} generalizes directly to give $\alpha(\Sieg^n)\leq 1$. We now prove that $\alpha(\Sieg^n)\geq 1$  (note that the proof will take $n=1$ for simplicity):

\begin{thm}\label{thm:sieg1lowerbound}
Let $C>0$. For almost all $h\in \Sieg^n$, that there exist infinitely many rational points $(r_1/q,r_2/q,\dots,r_n/q,p/q)\in \Sieg^n(\Q)$ satisfying
\(
d\left( h, \left( \frac{r_1}{q},\frac{r_2}{q},\dots, \frac{r_n}{q},\frac{p}{q}\right)\right) \le \frac{C}{|q|}.
\)
\end{thm}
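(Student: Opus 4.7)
The plan is to adapt the second-moment argument of Theorem \ref{thm:GeometricLowerBound} to the Siegel setting, replacing Carnot rationals $\delta_q^{-1}p$ (with $q\in\N$) by Siegel rationals $q^{-1}\vec p$ with $q\in\Z[\ii]$ and $\vec p=(\vec r,p)\in\Z[\ii]^{n+1}$ satisfying $2\Re(p\bar q)=|\vec r|^2$. I will write the argument for $n=1$; the general case requires only cosmetic adjustments to exponents. Fix a fundamental domain $X$ for $\Sieg^n(\Z)$ acting on $\Sieg^n$ by left translation. For each $q\in\Z[\ii]$, set
\(
\mathcal{E}_q \;=\; \bigcup_{\vec p} B(q^{-1}\vec p,\, C/|q|) \cap X,
\)
the union of gauge balls of radius $C/|q|$ around admissible rationals with denominator $q$ lying in $X$. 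Enumerating $\Z[\ii]$ as a sequence $\{q_k\}$, I would apply Harman's Lemma~\ref{lemma:HarmanAlternate} to $f_k=\mathbf{1}_{\mathcal{E}_{q_k}}$ to conclude $\sum_k\mathbf{1}_{\mathcal{E}_{q_k}}(h)=\infty$ for a.e.\ $h$, which is precisely the claim.

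The first step is to compute $|\mathcal{E}_q|\asymp C^{2n+2}$ uniformly in $q$. A direct lattice count in $X$ gives $\asymp|q|^{2n+2}$ admissible rationals with denominator $q$, reflecting the Hausdorff dimension $Q=2n+2$ of $\Heis^n$, and each ball has measure $\asymp C^{2n+2}|q|^{-(2n+2)}$. As in the Carnot proof, distinct rationals with the same denominator are gauge-separated by $\gg 1/|q|$ (using the complex scaling automorphism $\sigma_q(u,v)=(qu,|q|^2v)$ of $\Sieg^n$, which dilates the gauge metric by $|q|$), so the balls composing $\mathcal{E}_q$ are disjoint once $|q|$ is sufficiently large. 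Hence $\sum_{|q|\le N}|\mathcal{E}_q|\asymp N^2$ diverges, providing the main term required by Harman's Lemma.

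The main obstacle is estimating $|\mathcal{E}_q\cap\mathcal{E}_{q'}|$ for distinct denominators. Applying $\sigma_{qq'}$ to both balls and unwinding the Siegel group law on $h^{-1}*h'$, the condition that $B(q^{-1}(\vec r,p),C/|q|)$ and $B({q'}^{-1}(\vec r',p'),C/|q'|)$ intersect reduces to the Gaussian-integer system
\(
|\vec r q'-\vec r' q|\ll|q|+|q'|,\qquad|\bar p q'+p'\bar q-\overline{\vec r}\cdot\vec r'|\ll\frac{(|q|+|q'|)^2}{|q||q'|},
\)
the direct analog of the integer inequalities treated in Theorem~\ref{thm:GeometricLowerBound}. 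A Gaussian-integer version of Lemma~\ref{lemma:LinearEquationCount}, proved by passing through the norm form or by using that $\Z[\ii]$ is Euclidean, then bounds the number of solutions and yields $|\mathcal{E}_q\cap\mathcal{E}_{q'}|=|\mathcal{E}_q|\cdot|\mathcal{E}_{q'}|+\mathrm{error}$, with the error controlled by the Gaussian gcd $\gcd(q,q')$ in complete analogy with the Carnot estimate.

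Summing the error over $|q|,|q'|\le N$ requires only the Gaussian-integer analogs of the classical facts $\sum_{k\mid n}k^{-r}=O(1)$ for $r>1$ and the divisor bound $d_{\Z[\ii]}(q')=O(|q'|^\epsilon)$; these transfer from $\Z$ by multiplicativity and the norm map $N(q)=|q|^2$. The resulting variance bound verifies the hypothesis of Harman's Lemma, yielding infinitely many Siegel rational approximations of the required precision for a.e.\ $h$. The case $n\ge 2$ is handled identically: the horizontal vector $\vec r$ becomes an $n$-tuple and the exponents track the Hausdorff dimension $Q=2n+2$, but no new phenomena arise.
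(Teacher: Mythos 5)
The overall strategy --- a second-moment estimate feeding a Borel--Cantelli-type lemma --- is the right template, but the proposal contains a decisive error at its first step and omits a structural piece that the paper cannot do without.

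The critical mistake is the claim that there are $\asymp|q|^{2n+2}$ rational points with Siegel denominator $q$ in a fundamental domain, ``reflecting the Hausdorff dimension.'' That heuristic is what makes the Carnot argument work, because $\delta_q^{-1}\Carnot^n(\Z)$ with $q\in\N$ really is a rescaled copy of the integer lattice, with $q^Q$ points in a fundamental domain. It fails in the Siegel model. The points $(\vec r/q,p/q)$ lie on the real quadric $2\Re(p\bar q)=|\vec r|^2$, and this Diophantine constraint thins the count drastically. Lemma \ref{lem:rationalcountbound} shows, for $n=1$ and with the necessary restrictions ($\Re(q),\Im(q)>0$ coprime, and $\tilde r=r/(1+\ii)$ coprime to $q$), that the count in a ball of radius $R$ is $\asymp\phi(q)R^4$, where $\phi$ is the Gaussian totient; the general-$n$ count of ``good'' rationals is $\asymp\phi(q)^nR^{2n+2}$. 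Since $\phi(q)\le N(q)=|q|^2$, this is smaller than your claimed $|q|^{2n+2}$ by at least a factor of $|q|^{2n}$. Consequently $\norm{\mathcal E_q}\asymp\phi(q)^n N(q)^{-(n+1)}$ rather than a constant, and
\(
\sum_{|q|\le N}\norm{\mathcal E_q}\asymp\log N,
\)
not $N^2$. This is the whole point: the Siegel exponent $\alpha=1$ sits at the borderline, and the logarithmic divergence is established only via the Gaussian-totient sum of Proposition \ref{prop:phisumasymp}, which is where most of the paper's number theory lives. A plan that bypasses the totient simply doesn't reach the divergence hypothesis of Harman's lemma.

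Second, because the divergence is only logarithmic and the achievable variance bound $\norm{\mathcal E_q\cap\mathcal E_{q'}}\ll N(q)^{-1}N(q')^{-1}$ is only up to a constant, the paper cannot invoke Lemma \ref{lemma:HarmanAlternate}, which requires a genuinely asymptotic second-moment estimate. It instead uses the weaker Lemma \ref{lemma:Harman}, which yields only a \emph{positive}-measure set of well-approximable points, and upgrades to almost-every via a zero-one law (Lemma \ref{lem:SiegZO}), built in turn on the rational-translation invariance of Lemma \ref{lemma:SiegelRationalInvariance} and a Lebesgue-density argument. Your proposal cites the wrong Harman lemma and omits the zero-one upgrade entirely; without it, even a corrected second-moment computation only gives positive measure.

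A smaller issue: the $\gg1/|q|$ separation of distinct rationals with a common denominator does not follow from the scaling automorphism $\sigma_q$ alone, since $\sigma_q^{-1}$ does not carry those rationals to a fixed lattice; the paper obtains it by a direct computation giving the gauge distance $|\bar q p-r\bar r'+\bar p'q|^{1/2}/|q|\ge1/|q|$, the numerator being a nonzero Gaussian integer. Also note that the argument requires restricting the $q$-sum to one quadrant with $\gcd(\Re q,\Im q)=1$, a normalization you will need both for Lemma \ref{lem:rationalcountbound} and to avoid double-counting the same rational point under unit multiplication.
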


We recall that $(u_1,u_2,\dots,u_n,v)\in \Sieg^n$ if $|u_1|^2+|u_2|^2+\dots+|u_n|^2 -2\Re(v) =0$. Moreover, $\Sieg^n(\Q)= \Sieg^n \cap \Q[\ii]^{n+1}$.

The proof of Theorem \ref{thm:sieg1lowerbound} follows a similar scheme as that of Theorem \ref{thm:GeometricLowerBound}. Namely, we will use Lemma \ref{lemma:Harman} in place of Lemma \ref{lemma:HarmanAlternate} to show that infinitely many approximations are available to a positive-measure set of points $h\in \Sieg^n$, and then use Lemma \ref{lem:SiegZO}, a zero-one law, to get an almost-all statement.

\begin{remark}
The results in this section provide an alternate and more direct proof of a result of Hersonsky-Paulin \cite{MR1919402} (with the corrected notion of Diophantine approximation, see \ref{sec:otherapproaches}).
\end{remark}

We start by establishing a zero-one law for $\Sieg^n$ (note that similar results hold for Carnot groups):

\begin{lemma}
\label{lemma:ErgodicDenseActions}
Let $E\subset \Sieg^n$ be a measurable set invariant under a group $\Gamma$ of isometries of $\Sieg^n$ whose orbits are dense. Then either $E$ has zero measure or its complement has zero measure. 
\begin{proof}

Assume by way of contradiction that both $E$ and $E'=\Sieg^n \backslash E$ have positive Lebesgue measure in the geometric model (equivalently, positive Haar measure, or Hausdorff $Q$-measure). 

The (generalized) Lebesgue Density Theorem \cite{MR1616732} states that for any measurable set $A$, almost every point $x\in A$ is a \emph{point of density}. That is,
\[
\label{eq:pointOfDensity}
\lim_{r\rightarrow 0}  \frac{\norm{A\cap B_r(x) }}{\norm{B_r(x)}}=1 \text{ for a.e.\ }x\in A.
\]

Let $x, x'$ be points of density for $E\cap \Sieg^n$ and $E'\cap \Sieg^n$, respectively. Choose $r>0$ be such that $\norm{E\cap B_r(x)}/\norm{B_r(x)}>3/4$ and $\norm{E'\cap B_r(x')}/\norm{B_r(x')}>3/4$. By the density of orbits under $\Gamma$, we have that there exists some $\gamma\in\Gamma$ such that the measure of $(\gamma * B_r(x)) \triangle B_r(x')$ does not exceed $1/8$, and thus it follows that $\norm{E\cap B_r(x')}/\norm{B_r(x')}>1/2$, which contradicts $\norm{E'\cap B_r(x')}/\norm{B_r(x')}>3/4$.
\end{proof}
\end{lemma}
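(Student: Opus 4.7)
The plan is to run a standard Lebesgue-density contradiction, using the orbit density of $\Gamma$ as the essential input. Suppose for contradiction that both $E$ and $E' := \Sieg^n \setminus E$ have positive Lebesgue measure. Since Lebesgue measure on $\Sieg^n$ agrees with Haar measure and with Hausdorff $Q$-measure up to normalization, and is in particular Ahlfors $Q$-regular and doubling, the Lebesgue density theorem is available on this metric measure space. Accordingly, I would pick density points $x$ for $E$ and $x'$ for $E'$. For a small $\eta > 0$ to be fixed later, I then choose a common radius $r > 0$ small enough that
\(
\mu(E \cap B_r(x)) > (1-\eta)\,\mu(B_r(x))
\quad\text{and}\quad
\mu(E' \cap B_r(x')) > (1-\eta)\,\mu(B_r(x')).
\)

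Next I would exploit the density of the $\Gamma$-orbit of $x$: choose $\gamma \in \Gamma$ with $\varepsilon := d(\gamma x, x')$ as small as I wish. Because $\gamma$ is an isometry of $\Sieg^n$, it sends $B_r(x)$ onto $B_r(\gamma x)$, and because $E$ is $\Gamma$-invariant while $\mu$ is isometry-invariant, $\mu(E \cap B_r(\gamma x)) = \mu(E \cap B_r(x))$. The key comparison is then that the symmetric difference $B_r(\gamma x)\,\triangle\,B_r(x')$ is contained in the thin annulus $B_{r+\varepsilon}(x') \setminus B_{r-\varepsilon}(x')$, whose measure tends to $0$ with $\varepsilon$ by Ahlfors regularity. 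Selecting $\gamma$ so that this symmetric-difference measure is at most $\eta\,\mu(B_r(x'))$, transferring the density at $x$ to $B_r(x')$ yields $\mu(E \cap B_r(x')) > (1 - 2\eta)\,\mu(B_r(x'))$. Taking $\eta < 1/4$ then contradicts the density-point condition for $E'$ at $x'$, completing the argument.

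The main potential obstacle is purely quantitative: ensuring that the annular symmetric difference really has $\mu$-measure tending to $0$ as $\varepsilon \to 0$. This reduces to the continuity of $r \mapsto \mu(B_r(y))$, which is immediate from Ahlfors $Q$-regularity since $\mu(B_{r+\varepsilon}(y) \setminus B_{r-\varepsilon}(y)) \lesssim (r+\varepsilon)^Q - (r-\varepsilon)^Q$. A secondary point to verify is that $\mu$ is genuinely preserved by $\Gamma$, but this is automatic in the applications of the lemma, since the isometries of the Koranyi metric on $\Sieg^n$ used later (Heisenberg translations, rotations, and Koranyi inversions) are known to preserve Haar measure up to normalization. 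Everything else in the argument is formal once the differentiation theorem is in hand.
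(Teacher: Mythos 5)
Your proposal is essentially the paper's own argument: density points from the Lebesgue differentiation theorem, transport one ball near the other via orbit density and isometry-invariance of $E$ and $\mu$, bound the symmetric difference, contradict. The only place you say slightly more than the paper is in justifying that the symmetric difference is small via the annulus $B_{r+\varepsilon}(x')\setminus B_{r-\varepsilon}(x')$; note, though, that Ahlfors $Q$-regularity alone does \emph{not} give $\mu(B_{r+\varepsilon})-\mu(B_{r-\varepsilon})\to 0$ (the two-sided comparison constants prevent that) --- what you actually need, and have on $\Sieg^n$, is the exact dilation-homogeneity $\mu(B_\rho(y))=c\rho^Q$, which makes the annulus measure genuinely $O(\varepsilon)$.
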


We note that a ball of radius $r$ in $\Sieg^n$ with respect to the gauge metric has volume $c r^{2n+2}$ where $c$ is the volume of a ball of radius $1$.

\begin{lemma}
\label{lemma:SiegelRationalInvariance}
Let $\alpha>0$ and let $E_\infty$ be the set of points $(u,v)\in \mathcal S$ that for some $C>0$ (possibly depending on the point) we have infinitely many solutions to 
\[\label{eq:SiegelRationalInvariance}d((u,v), (r/q, p/q))< C \norm{q}^{-\alpha}\]
for $r,p,q\in\Z[\ii]$. Then $E_\infty$ is invariant under rational Heisenberg translations.
\begin{proof} 
Suppose $(u,v)\in E_\infty$ and let $(r'/q', p'/q')\in \Sieg^1(\Q)$ be a rational point. We would like to show that $(r'/q',p'/q')*(u,v)\in E_\infty$.

Suppose $(r/q, p/q)$ is a solution to \eqref{eq:SiegelRationalInvariance} with constant $C$. By the left-invariance of the metric, we obtain a new rational point close to $(r'/q',p'/q')*(u,v)$:
\begin{align*}
C \norm{q}^{-\alpha} &> d((u,v), (r/q, p/q))\\ &= d((r'/q', p'/q')*(u,v), (r'/q', p'/q')*(r/q, p/q))
\end{align*}
This new rational point can be written as a fraction with $q\norm{q'}^2$ as the denominator:
\begin{align*}
(r'/q', p'/q')*(r/q, p/q) &= (r'/q'+r/q, p'/q'+p/q+(\overline{r'/q'})(r/q))\\
						 &=  \left ( \frac{r' \overline{q'} q+r\norm{q'}^2}{q\norm{q'}^{2}}, \frac{ p'\overline{q'}q+p+\overline{r'}q'r}{q\norm{q'}^{2}}
						 \right).
\end{align*}
We have thus found a solution to \eqref{eq:SiegelRationalInvariance} at the new point $(r'/q',p'/q')*(u,v)$, with the constant $C$ replaced by $C\norm{q'}^{2\alpha}$. If there are infinitely many solutions at $(u,v)$, they yield an infinite number of solutions at $(r'/q',p'/q')*(u,v)$, so $(r'/q',p'/q')*(u,v)\in E_\infty$, as desired.
\end{proof}
\end{lemma}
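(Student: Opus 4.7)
The plan is to exploit the fact that the Siegel gauge metric is left-invariant: since $d(h,h')=\Norm{h^{-1}*h'}$, we automatically have $d(g*h,g*h')=d(h,h')$ for every $g\in\Sieg^n$. Given $(u,v)\in E_\infty$ and a fixed rational point $g=(r'/q',p'/q')\in\Sieg^n(\Q)$, the strategy is to transport each approximant $(r/q,p/q)$ of $(u,v)$ to an approximant $g*(r/q,p/q)$ of $g*(u,v)$ and then bound the Siegel height of the new approximant in terms of $\norm{q}$.

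First I would carry out the Heisenberg product $g*(r/q,p/q)$ explicitly using the group law on $\Sieg^n$. Since the second coordinate of the product involves the term $\overline{r'/q'}\cdot(r/q)=\overline{r'}\,r/(\overline{q'}\,q)$, the natural common denominator for both coordinates is $q\cdot q'\cdot\overline{q'}=q\norm{q'}^2$. Clearing denominators shows that $g*(r/q,p/q)=(R/Q,P/Q)$ with $Q\in\Z[\ii]$ satisfying $\norm{Q}\leq\norm{q}\norm{q'}^2$ and $R,P\in\Z[\ii]$, so by definition $\height_{\Sieg^n}(g*(r/q,p/q))\leq\norm{q}\norm{q'}^2$.

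Next, by left-invariance of $d$, one has $d(g*(u,v),(R/Q,P/Q))=d((u,v),(r/q,p/q))<C\norm{q}^{-\alpha}$. Rewriting using $\norm{q}^{-\alpha}\leq\norm{q'}^{2\alpha}\norm{Q}^{-\alpha}$ gives $d(g*(u,v),(R/Q,P/Q))<(C\norm{q'}^{2\alpha})\norm{Q}^{-\alpha}$. Since $q'$ is fixed once $g$ is fixed, the constant $C':=C\norm{q'}^{2\alpha}$ is independent of the original approximant. Because left multiplication by $g$ is a bijection of $\Sieg^n$, distinct approximants $(r/q,p/q)$ of $(u,v)$ yield distinct translated approximants of $g*(u,v)$, so the infinitude of solutions is preserved and $g*(u,v)\in E_\infty$.

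There is no substantial obstacle here; the lemma is essentially a transport statement. The only bookkeeping step is verifying that a product of two elements of $\Sieg^n(\Q)$ is itself in $\Sieg^n(\Q)$ with denominator controlled by $\norm{q}\norm{q'}^2$, and this drops out of the explicit group law once one notes that $\overline{q'}$ must appear in the denominator because of the conjugate in the Heisenberg product. Everything else follows from left-invariance of the metric together with the elementary estimate $\norm{Q}^{-\alpha}\leq\norm{q'}^{2\alpha}\norm{q}^{-\alpha}$ on a rescaled denominator.
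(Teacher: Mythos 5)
Your proposal is correct and follows essentially the same route as the paper: use left-invariance of the gauge metric to transport approximants, compute the Heisenberg product explicitly to see that the new rational point has denominator $q\norm{q'}^2$, and absorb the factor $\norm{q'}^{2\alpha}$ into the constant. The only additions beyond the paper's proof are your explicit observation that left multiplication is a bijection (so distinct approximants stay distinct) and your use of the height inequality $\norm{Q}\leq\norm{q}\norm{q'}^2$ rather than a direct substitution of $Q=q\norm{q'}^2$; both are harmless and amount to the same calculation.
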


\begin{lemma}[Siegel zero-one law]\label{lem:SiegZO}
Let $\alpha\in \mathbb{R}$ be fixed. For $C>0$, let $E_C$ be the set of points $(u,v)$ satisfying
\(
d\left( (u,v), \left(\frac{r}{q}, \frac{p}{q}\right)\right) < C \norm{q}^{-\alpha}
\)
for infinitely many choices of $r,p,q\in \Z[\ii]$. Then either $E_C$ has full measure for all choices of $C$, or its complement does.
\begin{proof}

Consider the collection of sets $E_i$ defined as the set of points $(u,v)\in \Sieg^n$ satisfying
\begin{equation}
\label{eq:2cq}
d\left( (u,v), \left(\frac{r}{q}, \frac{p}{q}\right)\right) \leq  2^i C q^{-\alpha}
\end{equation}
for infinitely many choices of $ (r/q, p/q)\in \Sieg^n (\Q)$. Note that we have $E=E_0\subset E_1\subset E_2 \subset \cdots \subset \cup_i E_i =: E_\infty$. It is immediate by Lemma \ref{lemma:SiegelRationalInvariance} that $E_\infty$ is invariant under rational translations. 
Since rational translations are a dense set of isometries, either $E_\infty$ has zero measure or its complement does by Lemma \ref{lemma:ErgodicDenseActions}. If it has zero measure, then we are done by containment. If it has full measure, then we are done if we can show that $\norm{E_i\backslash E_0}=0$ for all $i\ge 1$. To simplify notation, we show this for $i=1$ and $C=1$.

Indeed, fix a positive integer $M$ and let $E^M$ be the set of points $(u,v)$ such that, for $i=1$, the formula \eqref{eq:2cq} has infinitely many solutions, while for $i=0$ all the solutions to \eqref{eq:2cq} satisfy $q<M$. Note that $\cup_M E^M = E_1\backslash E_0$. It suffices to show that for all $M$ we have $\norm{E^M}=0$.

Suppose $\norm{E^M}\neq 0$ and let $(u,v)\in E^M$ be a point of density, as in \eqref{eq:pointOfDensity}.  Let $p,q$ be a solution to \eqref{eq:2cq} with $i=1$. Because we have infinitely many solutions with $i=1$, we may assume $q>M$. Furthermore, because $h$ is a point of density we have
$$\norm{B_{2 q^{-\alpha}} \left(\frac{r}{q}, \frac{p}{q}\right)\cap E^M}= \norm{B_{2 q^{-\alpha}} \left(\frac{r}{q}, \frac{p}{q}\right)}(1+o(1)),$$
where the asymptotic holds as $q$ goes to infinity. 

Noting that $E^M$ is disjoint from $B_{q^{-\alpha}}(r/q,p/q)$, one computes
\begin{align*}
\norm{B_{2 q^{-\alpha}} \left(\frac{r}{q}, \frac{p}{q}\right)} &\geq \norm{B_{q^{-\alpha}} \left(\frac{r}{q}, \frac{p}{q}\right)} + \norm{B_{2 q^{-\alpha}} \left(\frac{r}{q}, \frac{p}{q}\right)\cap E^M}\\
&=2^{-2n-2} \norm{B_{2 q^{-\alpha}} \left(\frac{r}{q}, \frac{p}{q}\right)} + \norm{B_{2 q^{-\alpha}} \left(\frac{r}{q}, \frac{p}{q}\right)\cap E^M}\\
&= (1+2^{-2n-2}+o(1))\norm{B_{2 q^{-\alpha}} \left(\frac{r}{q}, \frac{p}{q}\right)} .
\end{align*}
Thus, for sufficiently large $q$ we obtain a contradiction, as desired. 
\end{proof}
\end{lemma}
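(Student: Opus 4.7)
The plan is to derive the dichotomy from the ergodicity-style Lemma \ref{lemma:ErgodicDenseActions} applied to a ``constant-free'' superset, then use a Lebesgue density argument to upgrade the conclusion back to each individual $E_C$. Observe first that the nested family $E_C \subset E_{2C} \subset E_{4C} \subset \cdots$ has union $E_\infty := \bigcup_{i \geq 0} E_{2^i C}$, which consists exactly of those points $h \in \Sieg^n$ admitting infinitely many rational approximates $(r/q,p/q)$ with $d(h,(r/q,p/q)) = O(|q|^{-\alpha})$ for some implicit constant depending on $h$. Lemma \ref{lemma:SiegelRationalInvariance}, applied at the level of this floating constant, shows that $E_\infty$ is invariant under left translation by $\Sieg^n(\Q)$. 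Since $\Sieg^n(\Q)$ is dense in $\Sieg^n$ and acts by isometries, Lemma \ref{lemma:ErgodicDenseActions} forces either $|E_\infty| = 0$ or $|\Sieg^n \setminus E_\infty| = 0$. In the first case we are done immediately since $E_C \subset E_\infty$.

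For the second case we must show $|E_C^c| = 0$ for every $C$, and by the nesting this reduces to proving $|E_{2C} \setminus E_C| = 0$ for every $C$. The natural decomposition is
$E_{2C} \setminus E_C = \bigcup_{M \in \N} E^M$,
where $E^M$ consists of points that have infinitely many $2C$-approximates but whose (necessarily finite) collection of $C$-approximates all have denominator-norm strictly less than $M$. Suppose for contradiction that $|E^M| > 0$ for some $M$, and let $h$ be a point of density of $E^M$, supplied by the Lebesgue density theorem. The geometric observation driving the contradiction is that, for any rational $(r/q,p/q)$ with $|q| > M$, the ball $B_{C|q|^{-\alpha}}(r/q,p/q)$ is disjoint from $E^M$: a point in this ball would have $(r/q,p/q)$ as a $C$-approximate with $|q| > M$, contradicting membership in $E^M$.

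To close the argument I would pick a sequence of $2C$-approximates $(r/q,p/q)$ of $h$ with $|q| \to \infty$, which exists because $h \in E^M \subset E_{2C}$. On each ball $B := B_{2C|q|^{-\alpha}}(r/q,p/q)$ containing $h$, the set $E^M$ is confined to the shell $B \setminus B_{C|q|^{-\alpha}}(r/q,p/q)$, whose Lebesgue measure is at most $(1 - 2^{-(2n+2)})|B|$ because the gauge metric on $\Sieg^n$ scales ball volumes as $r^{2n+2}$. Thus the density of $E^M$ inside $B$ is bounded away from $1$ uniformly in $q$. On the other hand $B \subset B_{4C|q|^{-\alpha}}(h)$, and the ratio $|B_{4C|q|^{-\alpha}}(h)| / |B| = 2^{2n+2}$ is a fixed constant, so the point-of-density condition at $h$ forces the density of $E^M$ on $B$ to tend to $1$ as $|q| \to \infty$ — a contradiction.

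The main obstacle is exactly this last transfer of density: the Lebesgue density theorem naturally controls balls \emph{centered at $h$}, while the geometric picture involves balls centered at the rational approximants $(r/q,p/q)$. The fix is the containment $B \subset B_{4C|q|^{-\alpha}}(h)$ together with the bounded volume ratio, which lets the vanishing density of $E^c \cap B_{4C|q|^{-\alpha}}(h)$ be pushed onto $B$ at the cost of only a fixed multiplicative constant. Everything else in the argument (rational invariance, density of rational orbits, the trivial bound on the shell) is self-contained once Lemmas \ref{lemma:SiegelRationalInvariance} and \ref{lemma:ErgodicDenseActions} are in hand.
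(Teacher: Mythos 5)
Your proof is correct and follows essentially the same route as the paper's: the same $E_\infty = \bigcup_i E_{2^iC}$ rational-invariance argument via Lemma \ref{lemma:SiegelRationalInvariance} and Lemma \ref{lemma:ErgodicDenseActions}, the same reduction to $E_{2C}\setminus E_C = \bigcup_M E^M$, and the same density-point contradiction with the shell of relative volume $1 - 2^{-(2n+2)}$. Your added care in transferring the Lebesgue density condition from balls centered at $h$ to the off-center ball $B_{2C|q|^{-\alpha}}(r/q,p/q)$ via the containment in $B_{4C|q|^{-\alpha}}(h)$ and the fixed volume ratio $2^{2n+2}$ fills in a step the paper leaves implicit; it is a correct and worthwhile elaboration, not a change of approach.
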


We need the following lemma as well, which will not give us the full strength of the asymptotic estimates proved for  Carnot groups but still provide strong results.

\begin{lemma}[Lemma 2.3 in \cite{HarmanBook}]
\label{lemma:Harman}
Let $X$ be a measure space with measure $\mu$ such that $\mu(X)$ is finite. Let $\mathcal{E}_n$ be a sequence of measurable subsets of $X$ such that \( \sum_{n=1}^\infty \mu(\mathcal{E}_n) = \infty.\) Then the set $E$ of points belonging to infinitely many sets $\mathcal{E}_n$ satisfies 
\[ \label{eq:HarmanInequality}
 \mu(E) \ge \limsup_{N\to \infty} \left( \sum_{n=1}^N \mu(\mathcal{E}_n) \right)^2 \left( \sum_{n,m=1}^N \mu(\mathcal{E}_n \cap \mathcal{E}_m) \right)^{-1} .
\]
\end{lemma}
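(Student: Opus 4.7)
The plan is to deduce the inequality from a second-moment/Cauchy--Schwarz argument applied to the indicator functions of the sets $\mathcal{E}_n$. Write $\chi_n$ for the indicator of $\mathcal{E}_n$ and, for integers $1\le M\le N$, set
\[
S_{M,N}(x)=\sum_{n=M}^{N}\chi_n(x),\qquad A_{M,N}=\{x\in X\st S_{M,N}(x)>0\}=\bigcup_{n=M}^{N}\mathcal{E}_n.
\]
Note that $\int_X S_{M,N}\,d\mu=\sum_{n=M}^{N}\mu(\mathcal{E}_n)$ and $\int_X S_{M,N}^{2}\,d\mu=\sum_{n,m=M}^{N}\mu(\mathcal{E}_n\cap\mathcal{E}_m)$, and that $S_{M,N}$ is supported on $A_{M,N}$.

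Applying the Cauchy--Schwarz inequality to $S_{M,N}\cdot\mathbbm{1}_{A_{M,N}}$ gives
\[
\left(\sum_{n=M}^{N}\mu(\mathcal{E}_n)\right)^{2}=\left(\int_{X}S_{M,N}\cdot\mathbbm{1}_{A_{M,N}}\,d\mu\right)^{2}\le \mu(A_{M,N})\cdot\sum_{n,m=M}^{N}\mu(\mathcal{E}_n\cap\mathcal{E}_m),
\]
so $\mu(A_{M,N})\ge (\sum_{n=M}^{N}\mu(\mathcal{E}_n))^{2}/\sum_{n,m=M}^{N}\mu(\mathcal{E}_n\cap\mathcal{E}_m)$. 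The set $E=\limsup_{n}\mathcal{E}_n$ of points lying in infinitely many $\mathcal{E}_n$ equals $\bigcap_{M\ge 1}E_M$ with $E_M:=\bigcup_{n\ge M}\mathcal{E}_n=\bigcup_{N\ge M}A_{M,N}$. Since the sets $A_{M,N}$ are increasing in $N$, monotonicity of $\mu$ yields $\mu(E_M)=\lim_{N}\mu(A_{M,N})\ge\limsup_{N}\mu(A_{M,N})$, hence
\[
\mu(E_M)\;\ge\;\limsup_{N\to\infty}\frac{\bigl(\sum_{n=M}^{N}\mu(\mathcal{E}_n)\bigr)^{2}}{\sum_{n,m=M}^{N}\mu(\mathcal{E}_n\cap\mathcal{E}_m)}.
\]
The remaining task is to pass this through the intersection $E=\bigcap_{M}E_M$ and to shift the summation indices back to start at $1$.

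For the first, I would use that $\mu(X)<\infty$ and that $E_M\searrow E$, which gives $\mu(E)=\lim_{M\to\infty}\mu(E_M)$. For the second, note that with $S_N:=\sum_{n=1}^{N}\mu(\mathcal{E}_n)$ and $T_N:=\sum_{n,m=1}^{N}\mu(\mathcal{E}_n\cap\mathcal{E}_m)$, the hypothesis $\sum\mu(\mathcal{E}_n)=\infty$ forces $S_N\to\infty$, so for any fixed $M$ we have $\sum_{n=M}^{N}\mu(\mathcal{E}_n)=S_N-S_{M-1}=S_N(1+o(1))$ as $N\to\infty$. Similarly $\sum_{n,m=M}^{N}\mu(\mathcal{E}_n\cap\mathcal{E}_m)\le T_N$. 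Combining these two bounds shows that the $\limsup$ on the right-hand side above is at least $\limsup_{N}S_N^{2}/T_N$, uniformly in $M$. Therefore $\mu(E_M)\ge\limsup_{N}S_N^{2}/T_N$ for every $M$, and letting $M\to\infty$ gives the desired inequality \eqref{eq:HarmanInequality}.

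The only subtle step is handling the degenerate cases and the shift of the lower index of summation cleanly: one must check that the ``tail'' $\limsup$ starting at $M$ dominates the full $\limsup$ starting at $1$, which relies essentially on the divergence hypothesis $\sum\mu(\mathcal{E}_n)=\infty$ (otherwise the statement is vacuous or requires $\limsup_N T_N=\infty$ to interpret). The rest is a routine application of Cauchy--Schwarz together with the continuity of $\mu$ from above on the decreasing family $E_M$, which is precisely where finiteness of $\mu(X)$ is invoked.
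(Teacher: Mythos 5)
The paper does not prove this lemma; it quotes it as Lemma 2.3 of Harman's \emph{Metric Number Theory} and uses it as a black box, so there is no in-paper proof to compare against. Your argument is correct and is the standard second-moment proof (the Chung--Erd\H{o}s / Kochen--Stone inequality): Cauchy--Schwarz applied to the pair $S_{M,N}$, $\mathbbm{1}_{A_{M,N}}$ gives the finite-$N$ bound $\mu(A_{M,N})\ge(\sum_{n=M}^N\mu(\mathcal{E}_n))^2/\sum_{n,m=M}^N\mu(\mathcal{E}_n\cap\mathcal{E}_m)$; since $A_{M,N}\subset E_M$, this yields a lower bound on $\mu(E_M)$ uniformly in $N$, and then downward continuity of $\mu$ on the chain $E_M\searrow E$ (which is exactly where $\mu(X)<\infty$ enters) passes it to $\mu(E)$. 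The index shift from $M$ back to $1$ uses $S_N\to\infty$ to make the dropped block negligible, and enlarging the denominator from the tail block to the full block $\sum_{n,m=1}^N$ only helps the inequality; your $(1+o(1))$ bookkeeping for this step is valid, including when the limsup is $+\infty$. One cosmetic slip: since $\mu(A_{M,N})$ is nondecreasing in $N$, the limit over $N$ exists and \emph{equals} the limsup, so writing ``$\mu(E_M)=\lim_N\mu(A_{M,N})\ge\limsup_N\mu(A_{M,N})$'' is a non-inequality; the step actually used, $\mu(E_M)\ge\limsup_N(\text{ratio})$, is what you want and is correct.
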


We require some number-theoretic lemmas before proceeding.

\subsection{Some number-theoretic lemmas}


We will use the standard norm on Gaussian integers defined by $N(\alpha) = |\alpha|^2$.

We will, as before, define the Gaussian totient function $\phi(\alpha)$ for $\alpha \in \Z[\ii]$ as being the number of invertible elements in $\Z[\ii]/\alpha\Z[\ii]$.  The function $\phi$ is multiplicative, so that $\phi(\alpha \beta) = \phi(\alpha)\phi(\beta)$ if $\gcd(\alpha,\beta)=1$. Also on primes $\pi\in \Z[\ii]$ we have that $\phi(\pi^k) = N(\pi^k) (1-N(\pi)^{-1})$. As such, we have the general form
\[\label{eq:phimultdef}
\phi(\alpha) = N(\alpha) \sideset{}{'}\prod_{\pi|\alpha} \left( 1- \frac{1}{N(\pi)}\right).
\]
Clearly $\phi(\alpha) \le  N(\alpha)$. 
The primed product here---and primed sums later---mean that we multiply (or add) only over terms that are non-negative reals or complex numbers that lie fully in the first quadrant. This removes the ambiguity of having several terms which are unit multiples of one another.

We define the M\"{o}bius function on the Gaussian integers as the multiplicative function $\mu$ defined on Gaussian primes $\pi$ by $\mu(\pi)=-1$ and $\mu(\pi^k) = 0$ if $k\ge 2$. This is analogous to the classical definition of the M\"{o}bius function for $\Z$. From \eqref{eq:phimultdef}, we can quickly derive the formula
\[\label{eq:phimudef}
\phi(\alpha) = \sum_{\delta\gamma = \alpha} N(\gamma) \mu(\delta) .
\]
Here, we assume that $\delta$ satisfies the conditions of the primed sum restriction above, although $\gamma$ may not.

For a given $\beta \in \Z[\ii]\setminus\{0\}$, consider the lattice $\beta \Z[\ii]$. The square tile formed with corners $0, \beta, \ii \beta, (1+\ii)\beta$ contains a complete residue system for $\beta$. (On the border we must be a little more careful, so we may take the sides going from $0$ to $\beta$ and from $0$ to $\ii \beta$ as being in our tile, but the other two sides as not in our tile. The only corner we allow to be in our tile is $0$ itself.) Thus, this tile contains exactly $\phi(\beta)$ Gaussian integers that are relatively prime to $\beta$. In fact, all the translates of this tile by elements of the lattice $\beta\Z[\ii]$ have this property. This is because if $\alpha$ is relatively prime to $\beta$ then so is $\alpha+\beta \gamma$ for any $\gamma\in \Z[\ii]$.

\begin{lemma}\label{lem:circleproblemvariant}
Let $\beta \in \Z[\ii]\setminus \{0\}$. Let $S$ be some set of reduced residue classes modulo $\beta$. Then we have
\(
\sum_{\substack{|\alpha|\le K\\ \alpha \bmod{\beta} \in S}} 1 = \pi |S| \frac{K^2}{|\beta|^2} + O\left( |S| \frac{K}{|\beta|}\right),
\)
provided $K> 2|\beta|$. The implicit constant is absolute.  If the sum is replaced with a primed sum, then a factor of $1/4$ would be added to the right-hand side.
\end{lemma}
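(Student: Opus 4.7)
The plan is to break the sum by residue class and, for each class, reduce the count to the standard Gauss circle problem for the sublattice $\beta\Z[\ii] \subset \Z[\ii]$, summing the results at the end.

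First, I would pick a representative $s \in \Z[\ii]$ for each reduced residue class in $S$ and write
\[
\sum_{\substack{|\alpha|\le K\\ \alpha \bmod \beta \in S}} 1 \;=\; \sum_{s\in S} \#\bigl\{\gamma\in\Z[\ii] \st |s+\beta\gamma|\le K\bigr\}.
\]
The inner count is exactly the number of points of the lattice $\beta\Z[\ii]$ lying inside the disk $D_s$ of radius $K$ centered at $-s$.

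Second, I would apply the standard Gauss circle argument to this sublattice. The lattice $\beta\Z[\ii]$ has covolume $|\beta|^2$ and is generated by vectors of length $|\beta|$; tiling the plane by fundamental parallelograms, the symmetric difference between $D_s$ and the union of parallelograms centered at the lattice points inside $D_s$ is contained in an annular strip of width $O(|\beta|)$ around $\partial D_s$. The area of this strip is $O(|\beta|\,K)$, so it contains $O(K/|\beta|)$ parallelograms, whence
\[
\#\bigl(\beta\Z[\ii]\cap D_s\bigr) \;=\; \frac{\pi K^2}{|\beta|^2}+O\!\left(\frac{K}{|\beta|}\right),
\]
with an absolute implicit constant (the argument is translation-invariant in $s$). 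The hypothesis $K>2|\beta|$ absorbs any $O(1)$ correction into $K/|\beta|$. Summing over the $|S|$ residue classes yields the unprimed statement.

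For the primed version, I would restrict $\alpha$ to a closed first quadrant chosen as the fundamental domain for unit multiplication. Then the count for residue class $s$ becomes the number of points of $\beta\Z[\ii]$ inside $D_s \cap \bigl(-s+\{\Re\ge 0,\ \Im\ge 0\}\bigr)$. Once $K\gg |s|$, this region is approximately a quarter disk of area $\pi K^2/4$; its boundary is the union of a circular arc of length $\pi K/2$ and two radial segments of length at most $K$, so the same annular-strip argument gives the same error $O(K/|\beta|)$, while the main term is replaced by $\pi K^2/(4|\beta|^2)$, producing the claimed factor of $1/4$.

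The main obstacle is making the error estimate $O(K/|\beta|)$ uniform in both the translate $s$ and in the choice of reduced residue representatives, so that one may freely sum over $S$ without accumulating extra factors. This is guaranteed by the observation that the annular-strip bound depends only on translation-invariant invariants of the lattice (its covolume and the diameter of a fundamental parallelogram), and the analogous statement for the quarter disk uses only the perimeter, which is $O(K)$ regardless of $s$ once $K>2|\beta| \ge 2|s|$.
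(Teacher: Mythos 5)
Your proposal is correct and takes essentially the same approach as the paper: both tile $\Z[\ii]$ by a fundamental domain of the sublattice $\beta\Z[\ii]$ and control the discrepancy between area and lattice-point count by a boundary-strip estimate of size $O(K/|\beta|)$, uniformly over translation, then multiply by (or sum over) the $|S|$ residue classes. The only cosmetic difference is that the paper counts tiles (each containing exactly $|S|$ admissible points) and multiplies at the end, while you split the count by residue class into $|S|$ translated copies of the Gauss circle problem for $\beta\Z[\ii]$ and add the results; these are transposed versions of the same argument.

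One small imprecision worth noting: your closing remark ``$K>2|\beta|\ge 2|s|$'' requires choosing the representatives $s$ with $|s|\le|\beta|$, which is possible (e.g.\ take $s$ in the fundamental square centered at the origin, so $|s|\le|\beta|/\sqrt{2}$) but not automatic for an arbitrary choice. In fact the uniformity in $s$ does not need any bound on $|s|$: the translated quarter-disk $(D_0\cap Q)-s$ has area exactly $\pi K^2/4$ and perimeter exactly $\pi K/2+2K$ regardless of $s$, so the boundary-strip error is genuinely $s$-independent; the hypothesis $K>2|\beta|$ is only needed so that the $O(K/|\beta|)$ error term dominates any $O(1)$ correction, as you correctly observe earlier.
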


\begin{proof}
In the case where $\beta=\pm 1$ or $\pm \ii$ and $S=\{0 \pmod{1}\}$, this is just the classical bounds on the Gauss circle problem. 

If $|\beta|>1$ and $S= \{ 0 \pmod{\beta} \}$, then this is counting the number of integer lattice points $\beta \Z[\ii]$ that are in the ball around the origin of radius $K$. However, by dividing through by $\beta$ we see that this is equivalent to the number of integers $\alpha \in \Z[\ii]$ with $|\alpha | \le K/|\beta|$. By the previous paragraph, this has the desired form.

Now we consider an arbitrary $|\beta|>1$ and arbitrary $S$. We again consider tiles formed by translates of the square with corners at $0,\beta,\ii \beta, (1+\ii) \beta$ by a point in the lattice $\beta \Z[\ii]$. We will call the tile with corners at $0,\beta,\ii\beta, (1+\ii)\beta$ the original tile.  Each such tile contains exactly $|S|$ points $\alpha$ such that $\alpha \pmod{\beta} \in S$. Moreover, each such tile can be uniquely identified with an integer point in the lattice $\beta\Z[\ii]$ that equals the point in the lattice required to translate the original tile to the given one. Moreover, this identifying point is also a corner of the tile. 

Since each tile has diameter equal to $\sqrt{2}|\beta|$, we see that the total number of tiles strictly contained in a ball around the origin of radius $K$ is at least the number of integer lattice points in $\beta \Z[\ii]$ that are in a ball around the origin of radius $K-\sqrt{2}|\beta|$. This in turn is equal to
\(
\pi \frac{1}{|\beta|^2} (K- \sqrt{2} |\beta|)^2 + O\left( \frac{K-\sqrt{2}|\beta|}{|\beta|}\right) = \pi \frac{K^2}{|\beta|^2} + O\left( \frac{K}{|\beta|}\right).
\)
Multiplying by $|S|$ to count the number of desired points within each tile gives a lower bound of the desired size. It was here that we used our assumption that $K>2|\beta|$ in order to have the asymptotics hold.

To obtain an upper bound, we note that each tile that intersects a ball around the origin of radius $K$ has its identifying point in a ball around the origin of radius $K+\sqrt{2}|\beta|$. A similar argument to the previous paragraph gives the desired bound.

For the final part, all the calculations are the same if we replace the full $|\alpha|\le K$ with a quarter-circle, with the exception of needing to worry about tiles that intersect the two radii of the quarter-circle. However, it is easy to show that the number of such terms is still $O(K/|\beta|)$.
\end{proof}

Recall that rational points on $\Sieg^1$ are given by $(r/q,p/q)$, $r,p,q\in \Z[\ii]$, $q\neq 0$ with $|r|^2-2 \Re(\overline{q}p)=0$. This distinction requires that $2$ divide $|r|^2$ and thus that $1+\ii$ divides $r$. We will let $\tilde{r}=r/(1+\ii)$. We will abuse notation  and say that $r/q$ is in lowest terms if $\tilde{r}/q$ itself is in lowest terms.

\begin{lemma}\label{lem:rationalcountbound}
Let $q\in\Z[\ii]$ satisfy $\Re(q),\Im(q)>0$ and $\gcd(\Re(q),\Im(q))=1$. Let $R>1$, and let $f(q)$ count the number of rational points $(r/q,p/q)$ with $r/q$ in lowest terms and $\Norm{(r/q,p/q)}<R$. Then $f(q) \asymp \phi(q)R^4$ provided $R$ is sufficiently large.
\end{lemma}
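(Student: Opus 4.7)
The plan is to parametrize the rational points explicitly and reduce the count to lattice-point counts on chords of disks. The Siegel condition $2\Re(\bar q p) = |r|^2$ forces $1+\ii \mid r$, so I write $r = (1+\ii)s$ with $s \in \Z[\ii]$; then $r/q$ being in lowest terms is equivalent to $\gcd(s,q) = 1$. Writing $q = q_1 + \ii q_2$ and $p = p_1 + \ii p_2$, the Siegel condition becomes the real linear equation $q_1 p_1 + q_2 p_2 = |s|^2$. The gauge condition $\Norm{(r/q, p/q)} < R$ translates to $|p| < R^2 |q|$, and the elementary bound $|r|^2 \le 2|q||p|$ forces $|s| \le R|q|$.

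For fixed $s$ with $\gcd(s,q) = 1$, the hypothesis $\gcd(q_1, q_2) = 1$ guarantees that $q_1 p_1 + q_2 p_2 = |s|^2$ has integer solutions, and the solution set is an affine one-dimensional sublattice of $\Z^2$ with Euclidean spacing exactly $|q|$. Intersecting it with the disk $\{|p| < R^2|q|\}$ gives a chord whose length is $2\sqrt{R^4|q|^2 - |s|^4/|q|^2}$, so the number of valid $p$ is $2\sqrt{R^4 - |s|^4/|q|^4} + O(1)$. Summing over $s$ gives
\[
f(q) = \sum_{\substack{|s| \le R|q| \\ \gcd(s,q) = 1}} \left( 2\sqrt{R^4 - |s|^4/|q|^4} + O(1) \right).
\]

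For the upper bound, I bound each square-root weight by $2R^2$ and apply Lemma \ref{lem:circleproblemvariant} with $S$ the set of reduced residue classes mod $q$ (so $|S| = \phi(q)$): the number of relevant $s$ is $\pi\phi(q)R^2 + O(\phi(q)R)$, giving $f(q) \ll \phi(q) R^4$. For the lower bound, I restrict the sum to $|s| \le R|q|/2$, where the square-root weight is bounded below by a positive multiple of $R^2$; Lemma \ref{lem:circleproblemvariant} again produces $\gtrsim \phi(q) R^2$ such $s$, yielding $f(q) \gg \phi(q) R^4$. Taking $R$ sufficiently large ensures the hypothesis $R|q|/2 > 2|q|$ of Lemma \ref{lem:circleproblemvariant}, and also dominates the ambient constants.

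The main technical point is controlling the accumulated errors: the $O(1)$ per-$s$ chord error contributes $O(\phi(q) R^2)$ in total, and the error in Lemma \ref{lem:circleproblemvariant} contributes $O(\phi(q) R^3)$ after multiplication by the weight $R^2$; both are of smaller order than the main term $\phi(q) R^4$ once $R$ is large. A minor bookkeeping point is uniqueness of representation: since $q$ is fixed with $\Re(q),\Im(q)>0$, distinct pairs $(s,p)$ give distinct rational points $(r/q, p/q)$, so no overcounting by units occurs.
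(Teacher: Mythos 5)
Your proof is correct and takes essentially the same approach as the paper's: both decompose the count into an outer sum over $s=\tilde r$ coprime to $q$ in a disk of radius $\asymp R|q|$ (controlled by Lemma~\ref{lem:circleproblemvariant}, yielding $\asymp \phi(q)R^2$ terms) and an inner per-$s$ count of admissible $p$ of size $\asymp R^2$. Your chord-of-the-disk framing of the inner count is a cleaner geometric restatement of the paper's explicit interval-of-$d$ computation, but the underlying arithmetic is the same.
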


\begin{proof}
 Our goal will be to first count how many possible $r$ there can be for this $q$ and then count how many $p$ there can possibly be for a fixed pair $(q,r)$.

Let $q=a+b\ii$ be fixed and let $p=c+d\ii$ be variable. Using our definition of $\tilde{r}=r/(1+\ii)$, the equation $|r|^2-2\Re(\overline{q}p)=0$ can be rewritten as 
\(
|\tilde{r}|^2 = ac+bd.
\)
Since $(a,b)=1$ by assumption, this does not pose any restriction on $\tilde{r}$. 

We have that \(\Norm{\left( \frac{r}{q}, \frac{p}{q} \right)} = \left| \frac{p}{q}\right|^{1/2} = \sqrt[4]{\frac{1}{4} \left| \frac{r}{q}\right|^4+ \left|\Im\left( \frac{p}{q}\right)\right|^2} \asymp \max\left\{ \left| \frac{r}{q}\right| , \left| \Im \left(\frac{p}{q}\right)\right|^{1/2}\right\} \)

This imposes an additional restriction that $|\tilde{r}|\ll R |q|$. Our restrictions thus require that $(\tilde{r},q)=1$, $|\tilde{r}| \le MR|q|$ for some constant $M>0$ and that is all. We may assume that $M>2$ without loss of generality.  By Lemma \ref{lem:circleproblemvariant}, we have that the number of $\tilde{r}$ satisfying both those conditions is $ \asymp \phi(q) R^2$. (It was in order to apply this lemma that we assumed that $M>2$.)


Now suppose that $q=a+b\ii$ and $\tilde{r}$ are fixed, with $\tilde{r}$ satisfying the above calculations. We now wish to count how many possibilities for $p=c+d\ii$ there are. We have that
\[\label{eq:tilderequation}
|\tilde{r}|^2 = ac+bd.
\]
Since $c$ is completely dependent on $d$, it suffices to count the number of $d$.
Since $(a,b)=1$ we see that \eqref{eq:tilderequation} clearly has solutions.  In fact, if $d$ is any integer such that $bd \equiv |\tilde{r}|^2\pmod{a}$, then there exists a $c$ such that \eqref{eq:tilderequation} is satisfied. However in order for this equivalence relation to hold, we must have that $d$ belongs to a single, specific residue class modulo $a$.

Since we want $|p/q|^{1/2} \le R$, we have that $c^2+d^2\le R^4 |q|^2$. By solving for $c$ in \eqref{eq:tilderequation} and inserting this into our inequality, we get that 
\(
R^4 |q|^2 \ge \left( \frac{|\tilde{r}|^2-bd}{a}\right)^2 +d^2
\)
and solving for $d$ we see that it must lie in the interval
\(
\left[ \frac{b|\tilde{r}|^2 -a \sqrt{|q|^4 R^4-|\tilde{r}|^4}}{|q|^2},\frac{b|\tilde{r}|^2 +a \sqrt{|q|^4 R^4-|\tilde{r}|^4}}{|q|^2}   \right]
\)
This interval has length
\[\label{eq:dinterval}
\frac{2a \sqrt{|q|^4 R^4 - |\tilde{r}|^4}}{|q|^2}.
\]
If we want an upper bound on the possible number of $p$'s, we note that the square root is at most $|q|^2 R^2$ and thus $d$ lies in an interval of length at most $2a R^2$, and since $d$ must lie in residue class modulo $a$ we get that there are $\ll R^2$ possible choices for $d$ and thus for $p$, if $R$ is large.

On the other hand if we want a lower bound on the number of $p$'s, we must be more careful with our choices of $\tilde{r}$. To get a lower count, we may assume we restrict $\tilde{r}/q$ to have norm at most $R/2$. There are still $\asymp \phi(q) \cdot R^2$ possible values for $\tilde{r}$ provided $R$ is large enough. Now, the interval \eqref{eq:dinterval} has length at least 
\(
\frac{2a \sqrt{|q|^4 R^4 - |\tilde{r}|^4}}{|q|^2} \ge \frac{2a \sqrt{|q|^4 R^4 - \frac{|q|^4 R^4}{2^4}}}{|q|^2} \ge 2aR^2\sqrt{1-2^{-4}}
\) so there are $\gg R^2$ possible choices for $d$ and thus $p$. Combining these two estimates we see that there are $\asymp R^2$ choices for $p$.

In total, there are $\asymp \phi(q) \cdot R^4$ possible points.
\end{proof}

\begin{lemma}\label{lem:linearformbound}
Let $q,Q\in \Z[\ii]\setminus\{0\}$ and $A\ge 1$. The number of solutions to 
\(
0< |q x- Q y| \le A, \quad x,y\in \Z[\ii], \quad |x/Q|,|y/q| < R
\)
is bounded by $O(A^2 R^2)$ provided $R$ is sufficiently large.
\end{lemma}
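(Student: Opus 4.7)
The plan is to classify the solutions by the value $z := qx - Qy$, using the divisibility constraint imposed by $\gcd(q,Q)$ in $\Z[\ii]$ together with the Gaussian-integer circle count of Lemma \ref{lem:circleproblemvariant}. This is the direct Gaussian-integer analogue of Lemma \ref{lemma:LinearEquationCount}, with the one-dimensional arithmetic progression replaced by a two-dimensional sublattice of $\Z[\ii]$ and the Gauss circle estimate standing in for the trivial interval count.

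First, let $d=\gcd(q,Q)$ in $\Z[\ii]$ and write $q=dq'$, $Q=dQ'$ with $\gcd(q',Q')=1$. Every $z$ of the form $qx-Qy$ is divisible by $d$; conversely, if $d\mid z$ with $z=dz'$, then $qx-Qy=z$ is equivalent to $q'x - Q'y = z'$. Because $\gcd(q',Q')=1$, this equation determines $y$ uniquely modulo $q'$, after which $x$ is forced by $x=(z'+Q'y)/q'$. If $|d|>A$ there are no admissible nonzero $z$, so the lemma is vacuous; I will therefore assume $|d|\le A$.

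Next, for each fixed admissible $z$, I would count $y\in\Z[\ii]$ lying in the prescribed residue class modulo $q'$ with $|y|<R|q|$. By Lemma \ref{lem:circleproblemvariant} applied with $\beta=q'$ and $K=R|q|$, this count is $\pi R^2|q|^2/|q'|^2 + O(R|q|/|q'|) = O(R^2|d|^2)$, valid provided $R|q|>2|q'|$, i.e.\ $R|d|>2$, which holds for $R$ sufficiently large since $|d|\ge 1$. Similarly, the number of $z\in\Z[\ii]$ with $d\mid z$ and $0<|z|\le A$ is the number of nonzero Gaussian-integer multiples of $d$ inside the disk of radius $A$; another application of Lemma \ref{lem:circleproblemvariant} (with $\beta=d$ and $K=A$) gives the bound $O(A^2/|d|^2)$ when $|d|\le A/2$. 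Multiplying yields $O(A^2/|d|^2)\cdot O(R^2|d|^2)=O(A^2R^2)$, and the extra constraint $|x|<R|Q|$ only decreases the count.

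The main subtlety will be the edge regime $A/2<|d|\le A$, in which the Gauss-circle count for $z$ degenerates and there are only $O(1)$ admissible $z$, rather than $O(A^2/|d|^2)$. In that case, however, $|d|\le A$ still forces the per-$z$ contribution to be $O(R^2|d|^2)\le O(R^2A^2)$, so the final bound survives intact. Aside from this bookkeeping and verifying that the ``$K>2\beta$'' hypothesis of Lemma \ref{lem:circleproblemvariant} is absorbed into the phrase ``$R$ sufficiently large,'' I do not anticipate any serious obstacle: the entire argument reduces to the observation that fixing the linear form $qx-Qy$ pins down $y$ modulo $q/\gcd(q,Q)$, after which both the number of admissible forms and the number of admissible $y$'s are standard lattice-point counts.
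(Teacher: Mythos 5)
Your proof is correct and follows essentially the same route as the paper: factor out $d=\gcd(q,Q)$, reduce to the coprime form $q'x-Q'y=z'$, pin one variable down in a single residue class, and count both the admissible targets $z$ and the admissible values of the pinned variable via the Gaussian circle estimate (Lemma~\ref{lem:circleproblemvariant}). The only cosmetic difference is that you solve for $y$ modulo $q'$ whereas the paper solves for $x$ modulo $Q'$; the two are symmetric and yield the same bound, and your explicit treatment of the edge regime $A/2<|d|\le A$ just spells out what the paper summarizes in one sentence.
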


\begin{proof}
Let us rewrite the first condition as $qx-Qy = a$ for some $a\in \Z[\ii]$ satisfying $0<|a|\le A$. This only has solutions if $g=\gcd(q,Q)$ divides $a$. In which case, we may write $q'=q/g$ and likewise define $Q'=Q/g$ and $a'=a/g$. So we want to count how many solutions there are to $q'x-Q'y =a'$. Clearly $y$ is completely dependent on $x$ and since we only want an upper bound, we only need to count the number of possible $x$'s. Note that $x$ must satisfy $q'x \equiv a' \pmod{Q'}$. This has a single solution (since $\gcd(q',Q')=1$) modulo $Q'$. By Lemma \ref{lem:circleproblemvariant}, we see that there are at most $\ll R^2 |Q|^2/ |Q'|^2 = R^2 g^2$ possible values for $x$ (and hence for pairs $(x,y)$) for this given $a$.

Now, again by Lemma \ref{lem:circleproblemvariant}, if $A>2g$, then we see that the number of $ a$ satisfying $0<|a|\le A$ and $g|a$ is bounded by $\ll A^2/g^2$. However, since we are assuming $a$ cannot be equal to $0$, this bound can also be seen to hold when $A\le 2g$. Combining these estimates gives the desired result.
\end{proof}

We note that in the next lemma, the full sum can be replaced with a primed sum without changing the statement of the lemmas, as it will only alter the expected magnitude by a multiple of $4$.

\begin{lemma}\label{lem:nalphacondiv}
Let $\epsilon>0$ and let $K$ be real. As $K$ goes to infinity, we have that the sum $ \sum_{0<N(\alpha)\le K} N(\alpha)^{-1-\epsilon}$ converges to a constant. Alternately, the sum $ \sum_{0 < N(\alpha)} N(\alpha)^{-1}$ diverges. In fact, $ \sum_{0 < N(\alpha) \le K } N(\alpha)^{-1} \asymp \log K$.
\end{lemma}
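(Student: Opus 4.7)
The plan is to reduce all three statements to counting Gaussian integers in annuli, via the Gauss circle problem estimate already proved as Lemma \ref{lem:circleproblemvariant}. Applied with $\beta = 1$ (and using $N(\alpha) = |\alpha|^2$), Lemma \ref{lem:circleproblemvariant} yields that the number of nonzero $\alpha \in \Z[\ii]$ with $N(\alpha) \le M$ equals $\pi M + O(M^{1/2})$ as $M \to \infty$. In particular, grouping Gaussian integers by dyadic shells of the norm, the number of $\alpha$ with $2^k < N(\alpha) \le 2^{k+1}$ is $\asymp 2^k$ for all $k \ge 0$ sufficiently large.

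With this in hand I would rewrite
\(
\sum_{0 < N(\alpha) \le K} N(\alpha)^{-s} \ \asymp\  \sum_{0 \le k \le \log_2 K} 2^{k} \cdot 2^{-ks} \ =\ \sum_{0 \le k \le \log_2 K} 2^{k(1-s)},
\)
with implicit constants depending only on $s$. For $s = 1 + \epsilon$ this is a convergent geometric series with ratio $2^{-\epsilon} < 1$, so the sum stays bounded as $K \to \infty$ and converges to a finite constant, giving the first statement. For $s = 1$ each dyadic block contributes $\asymp 1$, and there are $\asymp \log K$ blocks, giving $\sum_{0 < N(\alpha) \le K} N(\alpha)^{-1} \asymp \log K$, which yields both the divergence claim and the precise order of growth.

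As an alternative (and to make the constants cleaner), one can use Abel summation directly: writing $A(M) := \#\{\alpha \ne 0 : N(\alpha) \le M\} = \pi M + O(M^{1/2})$, one has
\(
\sum_{0 < N(\alpha) \le K} N(\alpha)^{-s} = \frac{A(K)}{K^s} + s \int_1^K \frac{A(t)}{t^{s+1}}\, dt,
\)
and inserting the asymptotic for $A(t)$ reduces the sum to $\pi s \int_1^K t^{-s}\, dt$ plus an error term controlled by $\int_1^K t^{-s - 1/2}\, dt$; this recovers convergence for $s > 1$ and the asymptotic $\pi \log K + O(1)$ for $s = 1$.

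There is no real obstacle here — the content is entirely the Gauss circle problem estimate from Lemma \ref{lem:circleproblemvariant}, and the only thing to be mildly careful about is handling the finitely many small $\alpha$ (where the asymptotic $A(M) \asymp M$ may degrade), which is absorbed into the implicit constants. The remark about replacing the full sum by a primed sum is immediate since each nonzero $\alpha$ has exactly four unit associates, so the primed sum is exactly one quarter of the unrestricted sum (up to the $\alpha$ on the coordinate axes, which contributes a bounded error), and this rescaling preserves all three asymptotic statements.
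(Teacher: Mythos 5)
Your proof is correct and follows essentially the same approach as the paper's: both reduce the sum to counting Gaussian integers in dyadic annuli via Lemma \ref{lem:circleproblemvariant}, then observe the resulting series is geometric with ratio $<1$ when $\epsilon>0$ and contributes $\asymp 1$ per block when $\epsilon=0$, with the finitely many small $\alpha$ absorbed into a constant. (One small correction: the primed sum is \emph{exactly} one quarter of the full sum with no boundary error, since each nonzero $\alpha$ has precisely one associate that is a positive real or lies strictly in the open first quadrant.)
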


\begin{proof}
Consider the sum $ \sum_{A<|\alpha|\le 2A} N(\alpha)^{-1-\epsilon}$ where $\epsilon\ge 0$. (This allows us to consider both cases of the lemma simultaneously.) Each summand has size $\asymp A^{-2-2\epsilon}$. However, applying Lemma \ref{lem:circleproblemvariant} tells us that the number of terms $\alpha$ in this annulus is $\asymp A^2$ provided $A$ is sufficiently large, say $A>A_0$. Thus we have that this sum is $\asymp A^{-2\epsilon}$.

Now assume that $K$ is large. Let $j$ be a positive integer satisfying $2^j A_0 \le K < 2^{j+1} A_0$. Then by our work of the previous paragraph, we have that
\(
& \sum_{0 < N(\alpha) \le A_0} N(\alpha)^{-1-\epsilon} + \sum_{i=0}^{j-1}  \sum_{2^i A_0<|\alpha|\le 2^{i+1} A_0}  N(\alpha)^{-1-\epsilon} \\& \qquad \ll  \sum_{0 < N(\alpha)\le K} N(\alpha)^{-1-\epsilon} \\ &\qquad \ll  \sum_{0 < N(\alpha) \le A_0} N(\alpha)^{-1-\epsilon} +\sum_{i=0}^{j}  \sum_{2^i A_0<|\alpha|\le 2^{i+1} A_0}  N(\alpha)^{-1-\epsilon} 
\)
implies
\[\label{eq:nepsiloninequality}
M + \sum_{i=0}^{j-1} (2^i A_0)^{-2\epsilon} \ll  \sum_{0 < N(\alpha)\le K} N(\alpha)^{-1-\epsilon} \ll M+ \sum_{i=0}^{j} (2^i A_0)^{-2\epsilon},
\]
for some constant $M$. However, if $\epsilon >0$ then the sums on both sides of \eqref{eq:nepsiloninequality} converge, and since the sum in the middle consists of positive terms and is bounded for all $K$, it converges as $K$ goes to infinity.  On the other hand, if $\epsilon=0$, then the sums on both sides are on the order of $j$, which in turn is on the order of $\log K$, completing the proof.
\end{proof}

\begin{lemma}\label{lem:munonzero}
For $k\ge 2$, the series
\(
 \sideset{}{'}\sum_{\alpha\neq 0} \frac{\mu (\alpha)}{N(\alpha)^k}
\)
converges to a non-zero constant.
\end{lemma}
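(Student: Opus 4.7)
The plan is to recognize this as the reciprocal of the Dedekind zeta function of $\Q[\ii]$ evaluated at $k$, and to prove non-vanishing via the standard Euler product argument. First I would observe that Lemma \ref{lem:nalphacondiv} (applied with $\epsilon = k-1 \geq 1$) gives absolute convergence of $\sideset{}{'}\sum_{\alpha \neq 0} N(\alpha)^{-k}$, and since $|\mu(\alpha)| \leq 1$ the series in the statement converges absolutely as well. Absolute convergence lets us rearrange terms freely in what follows.

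Next I would exploit the multiplicativity of $\mu$ and the completely multiplicative function $\alpha \mapsto N(\alpha)^{-k}$. For any finite collection $\mathcal{P}$ of (representative) Gaussian primes, the finite product
\[
\sideset{}{'}\prod_{\pi \in \mathcal{P}} \left( 1 - \frac{1}{N(\pi)^k}\right)
\]
expands, upon multiplying out, to $\sideset{}{'}\sum_\alpha \mu(\alpha) N(\alpha)^{-k}$ where $\alpha$ runs over Gaussian integers (up to units) whose prime factors all lie in $\mathcal{P}$; this uses that $\mu(\pi^j) = 0$ for $j \geq 2$, so only squarefree $\alpha$ contribute. Letting $\mathcal{P}$ exhaust all Gaussian primes and using the absolute convergence established above, the tail outside $\mathcal{P}$ tends to zero, so I would conclude
\[
\sideset{}{'}\sum_{\alpha \neq 0} \frac{\mu(\alpha)}{N(\alpha)^k} = \sideset{}{'}\prod_{\pi} \left( 1 - \frac{1}{N(\pi)^k} \right).
\]

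Finally, to show this constant is non-zero, I would take logarithms. Each factor lies in $(0,1)$, and $|\log(1 - N(\pi)^{-k})| \ll N(\pi)^{-k}$ since $N(\pi) \geq 2$. Because primes are a subset of the nonzero Gaussian integers, $\sideset{}{'}\sum_\pi N(\pi)^{-k} \leq \sideset{}{'}\sum_{\alpha \neq 0} N(\alpha)^{-k} < \infty$, so the logarithm of the product converges absolutely to a finite real number and the product itself is a strictly positive real.

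The argument is essentially routine once the framework is set up; the only point requiring any care is justifying the passage from the finite Euler product to the infinite series, which is where absolute convergence from Lemma \ref{lem:nalphacondiv} is essential. There is no real obstacle, just bookkeeping with the primed sum/product conventions to make sure we are consistently choosing one representative per associate class on both sides of the identity.
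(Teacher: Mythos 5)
Your proposal is correct and follows essentially the same approach as the paper: use Lemma \ref{lem:nalphacondiv} to establish absolute convergence, factor into an Euler product over Gaussian primes, and show non-vanishing by taking logarithms and comparing $\left|\log\left(1-N(\pi)^{-k}\right)\right|$ to $N(\pi)^{-k}$, whose sum converges again by Lemma \ref{lem:nalphacondiv}. You give slightly more detail on justifying the passage from finite to infinite Euler product, but this is the same argument.
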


In this lemma, we are essentially proving the non-vanishing of the Dedekind zeta function for $\Z[\ii]$.

\begin{proof}
From Lemma \ref{lem:nalphacondiv}, we have that the series $\sum_{\alpha\neq 0} \mu (\alpha)/N(\alpha)^{-k}$ converges absolutely for $k\ge 2$.  So we may factor it into its Euler product:
\(
 \sideset{}{'}\sum_{\alpha\neq 0} \frac{\mu (\alpha)}{N(\alpha)^k} =  \sideset{}{'}\prod_{\pi} \left( 1-\frac{1}{N(\pi)^k} \right).
\)
Here the product on the right runs over all Gaussian primes. This is non-zero provided the limit of the products $\sideset{}{'}\prod_{N(\pi)<K} (1-N(\pi)^{-k})$ converges to something other than $0$. By taking logarithms this is just
\(
 \sideset{}{'}\sum_{N(\pi)<K}  \log (1-N(\pi)^{-k}),
\)
and we now need that this does not diverge to negative infinity. However, by Taylor's Theorem $|\log (1-x)| \asymp x$ for $|x|$ bounded away from $1$ (which is satisfied in all terms in this sum). Therefore,
\(
 \sideset{}{'}\sum_{N(\pi)<K}  \log (1-N(\pi)^{-k}) \asymp  \sideset{}{'}\sum_{N(\pi)<K} N(\pi)^{-k}
\) and by Lemma \ref{lem:nalphacondiv} we know that the latter sum converges even if we include all Gaussian integers, not just primes, thus this sum must converge as well.
\end{proof}

\begin{lemma}\label{lem:phiiislarge} 
There exists a constant $c>0$ such that if $K$ is sufficiently large, then $\phi(\alpha) \ge c N(\alpha)$ for at least $c\pi K^2$ of the $\alpha$ with $0<N(\alpha)\le K$.
\end{lemma}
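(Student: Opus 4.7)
The plan is to show that the mean value of $\phi(\alpha)/N(\alpha)$ over Gaussian integers tends to a positive constant, then extract the desired lower bound on the proportion by a one-line pigeonhole argument using $\phi(\alpha)/N(\alpha) \le 1$. Since the natural count $\#\{\alpha : |\alpha| \le K\} = \pi K^2(1+o(1))$ matches the target $c\pi K^2$, I read the condition in the lemma as $|\alpha| \le K$ (taking the statement ``$N(\alpha) \le K$'' as a typo for ``$|\alpha| \le K$'').

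First I would divide the M\"obius identity \eqref{eq:phimudef} through by $N(\alpha) = N(\delta)N(\gamma)$ to obtain $\phi(\alpha)/N(\alpha) = \sideset{}{'}\sum_{\delta | \alpha} \mu(\delta)/N(\delta)$. Summing over $0 < |\alpha| \le K$ and swapping the order of summation by writing $\alpha = \delta\gamma$ yields
\[
S(K) := \sum_{0 < |\alpha| \le K} \frac{\phi(\alpha)}{N(\alpha)} = \sideset{}{'}\sum_{\delta \ne 0} \frac{\mu(\delta)}{N(\delta)} \cdot \#\{\gamma \in \Z[\ii] \st 0 < |\delta\gamma| \le K\}.
\]
Applying Lemma \ref{lem:circleproblemvariant} with $\beta = 1$ to the inner count (valid for $|\delta| \le K/2$) and bounding the boundary range $|\delta| > K/2$ crudely then gives
\[
S(K) = \pi K^2 \sideset{}{'}\sum_{0 < |\delta| \le K/2} \frac{\mu(\delta)}{N(\delta)^2} + O(K),
\]
the error using Lemma \ref{lem:nalphacondiv} to control $\sum |\delta|^{-3}$ by a convergent constant.

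By Lemma \ref{lem:munonzero} the truncated M\"obius sum converges as $K \to \infty$ to
\[
c_0 := \sideset{}{'}\sum_{\delta \ne 0} \frac{\mu(\delta)}{N(\delta)^2} > 0,
\]
positivity coming from the Euler product factorization $\sideset{}{'}\prod_\pi(1 - 1/N(\pi)^2)$ used in the proof of that lemma, each of whose factors lies in $(0,1)$. Hence $S(K) = c_0 \pi K^2(1 + o(1))$, and since $\#\{\alpha : 0 < |\alpha| \le K\} = \pi K^2(1+o(1))$ by Lemma \ref{lem:circleproblemvariant}, the average of $\phi(\alpha)/N(\alpha)$ over these $\alpha$ tends to $c_0$.

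Finally, letting $P(K)$ be the proportion of $\alpha$ with $|\alpha| \le K$ satisfying $\phi(\alpha)/N(\alpha) \ge c_0/2$, the bound $\phi(\alpha)/N(\alpha) \le 1$ gives
\[
c_0(1+o(1)) \le P(K) + (1 - P(K))\cdot\tfrac{c_0}{2},
\]
so $P(K) \ge c_0/(2-c_0) \cdot (1+o(1))$, which exceeds $c_0/2$ for $K$ large. Setting $c := c_0/2$ (or smaller) then delivers the lemma. The main obstacle is purely bookkeeping---tracking the primed-vs-unprimed sum convention, verifying the $O(K)$ error genuinely dwarfs the main term of size $\pi K^2$, and confirming positivity of $c_0$---rather than any conceptual difficulty.
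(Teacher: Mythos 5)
Your proposal is correct and follows essentially the same route as the paper: expand $\phi(\alpha)/N(\alpha)$ via the M\"obius identity \eqref{eq:phimudef}, swap the order of summation, apply Lemma \ref{lem:circleproblemvariant} to the inner lattice-point count and Lemma \ref{lem:munonzero} for the nonvanishing of the resulting M\"obius series, then deduce the stated proportion by averaging using $\phi(\alpha)\le N(\alpha)$. If anything your version is the cleaner of the two. You correctly observe that the lemma only makes sense under the reading $|\alpha|\le K$ (otherwise the ambient count of $\alpha$ is $\asymp\pi K$, not $\pi K^2$, and the conclusion would be vacuous). You also arrive at the correct exponent $N(\delta)^2$ in the convergent M\"obius series: the paper's displayed $\frac{\pi}{4}(K/N(\delta))^2$ is a slip --- Lemma \ref{lem:circleproblemvariant} with $\beta=\delta$ produces $\frac{\pi}{4}K^2/|\delta|^2=\frac{\pi}{4}K^2/N(\delta)$, so the intermediate sum should be $\sum'\mu(\delta)/N(\delta)^2$, not $\sum'\mu(\delta)/N(\delta)^3$. (Both are covered by Lemma \ref{lem:munonzero}, and the paper's final displayed estimate $\frac{\pi}{4}MK^2(1+o(1))$ is still the right shape, so the error is benign.) Finally, you make the concluding pigeonhole argument explicit, where the paper dispatches it in one sentence as ``an easy corollary''; your bookkeeping of the boundary regime $|\delta|>K/2$ and of the $O(K)$ error via $\sum N(\delta)^{-3/2}<\infty$ is also sound.
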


\begin{proof} It suffices to prove this for $\alpha$ that are positive real or lying strictly in the first quadrant. 

This lemma is an easy corollary of showing that 
\[\label{eq:phiislargeeq}
\sideset{}{'}\sum_{0<N(\alpha)\le K} \frac{\phi(\alpha)}{N(\alpha)} \gg K^2,
\]
since if no constant $c>0$ satisfying the conditions of the lemma can be found, then the sum on the left-hand side of \eqref{eq:phiislargeeq} must be $o(K)$. (Here we would use that $\phi(\alpha)\le N(\alpha)$ for all $\alpha\in \Z[\ii]\setminus\{0\}$.)

By applying \eqref{eq:phimudef}, we see that
\(
\sideset{}{'}\sum_{0<N(\alpha)\le K} \frac{\phi(\alpha)}{N(\alpha)} &= \sideset{}{'}\sum_{0<N(\alpha) \le K} \frac{1}{N(\alpha)} \left( \sum_{\delta\gamma = \alpha} N(\gamma) \mu(\delta)\right)\\
&= \sideset{}{'}\sum_{0< N(\alpha)\le K} \sum_{\delta\gamma=\alpha} \frac{\mu(\delta)}{N(\delta)}\\
&= \sideset{}{'}\sum_{0< N(\delta)\le K} \frac{\mu(\delta)}{N(\delta)} \left( \sideset{}{'}\sum_{\substack{\delta|\alpha\\ N(\alpha)<K}} 1 \right).
\)
Here, in the internal sums on the first and second lines, we assume again that $\delta$ is a non-negative real or strictly within the first quadrant.

By applying Lemma \ref{lem:circleproblemvariant}, we have that 
\(
\sideset{}{'}\sum_{0<N(\alpha)\le K} \frac{\phi(\alpha)}{N(\alpha)} &= \sideset{}{'}\sum_{0< N(\delta)\le K} \frac{\mu(\delta)}{N(\delta)}\left( \frac{\pi}{4}\left(\frac{K}{N(\delta)}\right)^2 + O\left( \frac{K}{N(\delta)}\right)\right)\\
&= \frac{\pi}{4} K^2 \sideset{}{'}\sum_{0 < N(\delta)\le K} \frac{\mu(\delta)}{N(\delta)^3} + O\left( K \sum_{0<N(\delta)\le K} \frac{1}{N(\delta)^2}\right).
\)
By Lemma \ref{lem:munonzero}, the first sum converges to a non-zero constant $M$, while by Lemma \ref{lem:nalphacondiv}, the sum in the big-Oh term converges to a constant. Thus,
\(
\sideset{}{'}\sum_{0<N(\alpha)\le K} \frac{\phi(\alpha)}{N(\alpha)}  = \frac{\pi}{4} M K^2 (1+o(1)),
\)
and this  gives \eqref{eq:phiislargeeq} after another application of Lemma \ref{lem:circleproblemvariant}.
\end{proof}

\begin{prop}\label{prop:phisumasymp}
We have that the sum
\(
\sideset{}{^*}\sum_{0< N(\alpha)\le K} \frac{\phi(\alpha)}{N(\alpha)^2} \asymp \log K,
\)
where the starred sum runs over all $\alpha$ with $\Re(\alpha),\Im(\alpha)>0$ and $\gcd(\Re(\alpha),\\ \Im(\alpha))=1$.
\end{prop}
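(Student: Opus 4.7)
The upper bound is straightforward from the trivial estimate $\phi(\alpha) \le N(\alpha)$:
\(
\sideset{}{^*}\sum_{0 < N(\alpha) \le K} \frac{\phi(\alpha)}{N(\alpha)^2} \le \sum_{0 < N(\alpha) \le K} \frac{1}{N(\alpha)} \ll \log K,
\)
where the last step is Lemma \ref{lem:nalphacondiv}.

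For the matching lower bound, the plan is to decompose dyadically, working in annuli $A < |\alpha| \le 2A$ with $A$ ranging over powers of $2$, and to show that each annulus contributes a quantity bounded below by an absolute positive constant; summing over the $\Theta(\log K)$ annuli up to $|\alpha| \le \sqrt{K}$ then gives $\gg \log K$. Within a single annulus I would establish two facts: (a) the number of starred $\alpha$ with $A < |\alpha| \le 2A$ is $\asymp A^2$, which follows from Lemma \ref{lem:circleproblemvariant} together with the standard positive density of primitive lattice vectors; and (b) the weighted sum $\sideset{}{^*}\sum_{A < |\alpha| \le 2A} \phi(\alpha)/N(\alpha)$ is also $\asymp A^2$. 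Since $0 \le \phi(\alpha)/N(\alpha) \le 1$, facts (a) and (b) together force a positive proportion of starred $\alpha$ in the annulus to satisfy $\phi(\alpha)/N(\alpha) \ge c$ for a uniform $c > 0$ (a Paley--Zygmund/reverse-Markov argument applied to the uniform distribution on the starred lattice points in the annulus). Each such $\alpha$ contributes at least $c/N(\alpha) \asymp c/A^2$ to the target sum, giving a shell contribution of order $1$.

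To establish (b) I would mimic the M\"obius-inversion proof of Lemma \ref{lem:phiiislarge}, expanding $\phi(\alpha)/N(\alpha) = \sideset{}{'}\sum_{\delta \mid \alpha} \mu(\delta)/N(\delta)$ and swapping the order of summation. This converts the expression into $\sideset{}{'}\sum_\delta \mu(\delta)/N(\delta) \cdot M_\delta(A)$, where $M_\delta(A)$ counts starred multiples of $\delta$ in the annulus. The main technical obstacle is estimating $M_\delta(A)$ accurately: writing $\alpha = \delta \gamma$, the requirement that $\alpha$ lie in the open first quadrant is harmless (it cuts $\gamma$ to an angular range of width $\pi/2$, since multiplication by $\delta$ acts as rotation--dilation), but the primitivity condition $\gcd(\Re(\alpha), \Im(\alpha)) = 1$ on $\alpha$ forces an additional M\"obius inversion over the integer content of $\alpha$. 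One must then verify that the resulting double Dirichlet series in $\delta$ and in the integer content converges to a \emph{nonzero} constant, analogously to Lemma \ref{lem:munonzero}; the Euler-product structure makes this plausible but requires care to rule out cancellations. With these ingredients in place, (b) follows, and hence so does the lower bound.
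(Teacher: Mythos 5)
Your upper bound is fine and matches the paper. For the lower bound, your route is genuinely different from the paper's and is also more technically demanding; the paper sidesteps exactly the complication you flag. The paper's key maneuver is to \emph{never} estimate the starred sum directly: for $\alpha$ with $\gcd(\Re\alpha,\Im\alpha)=g$, writing $\alpha=g\alpha'$ with $\alpha'$ primitive and using $\phi(g\alpha')\le g^2\phi(\alpha')$ (hence $\phi(g\alpha')/N(g\alpha')^2\le \phi(\alpha')/(g^2 N(\alpha')^2)$) shows that the subsum over $\alpha$ of content $g$ is at most $g^{-2}$ times the starred sum; summing over $g$ gives that the \emph{unrestricted} primed sum $S'(K)$ is $\le (\pi^2/6)$ times the starred sum. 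It then suffices to prove $S'(K)\gg\log K$ with no primitivity constraint, which drops straight out of Lemma~\ref{lem:phiiislarge} via a dyadic-annulus argument exactly like your steps (a)--(b), but now with the elementary single Möbius expansion of Lemma~\ref{lem:phiiislarge}, whose constant is already shown nonzero in Lemma~\ref{lem:munonzero}.

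By contrast, your plan keeps the primitivity constraint inside the annulus sum, which forces the second Möbius inversion over the integer content of $\alpha$ and the nonvanishing of a \emph{double} Dirichlet series whose Euler factors involve the interaction between rational primes $g$ and Gaussian primes $\delta$ (note e.g.\ $\gcd(2,1+\ii)=1+\ii$, so the two inversions do not factor cleanly). You correctly flag this as the obstacle, and it is a real gap: that nonvanishing is not established by Lemma~\ref{lem:munonzero} alone, and the local factors at ramified and split primes would need to be computed and shown positive. The Paley--Zygmund step and the dyadic bookkeeping surrounding it are sound once (b) is granted, but (b) itself is the crux and remains unproved in your write-up. The simplest fix is to adopt the paper's comparison $S_g(K)\le S_1(K)/g^2$, which eliminates the need for the second inversion entirely.
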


The difficulty with this proof is that even though we expect $\phi(\alpha)$ to be roughly of the size of $N(\alpha)$ for most $\alpha$, this might happen only when $\alpha$ whose real and imaginary parts have a factor in common, so we must be more careful in our estimates.

\begin{proof}
The upper bound follows directly from the fact that $\phi(\alpha)\le N(\alpha)$ together with Lemma \ref{lem:nalphacondiv}, so we need only prove the lower bound. We shall throughout this proof, keep to the standard convention that $\gcd(0,\alpha)= \alpha$.


Let $S_g(K)$ be the sum 
\(
\sideset{}{'}\sum_{\substack{0< N(\alpha)\le g^2 K\\ \gcd(\Re(\alpha),\Im(\alpha))=g}} \frac{\phi(\alpha)}{N(\alpha)^2},
\)
so that the starred sum equals $S_1(K)-1$, where the $-1$ term comes from $\alpha=1$. 
Note that we can write $S_g(K)$ as
\(
\sideset{}{'}\sum_{\substack{0< N(\alpha)\le K\\ \gcd(\Re(\alpha),\Im(\alpha))=1}} \frac{\phi ( g \alpha )}{N(g\alpha )^2}
\)
By using \eqref{eq:phimultdef}, we see that 
\(
\phi(g\alpha)&= N(g\alpha)\sideset{}{'}\prod_{\pi| g\alpha} \left( 1- \frac{1}{N(\pi)}\right) \\
&= g^2 \cdot N(\alpha)\sideset{}{'}\prod_{\pi| \alpha} \left( 1- \frac{1}{N(\pi)}\right) \sideset{}{'}\prod_{\substack{\pi|g\\ \pi\nmid \alpha}} \left( 1- \frac{1}{N(\pi)}\right) \\
&\le g^2 \cdot N(\alpha)\sideset{}{'}\prod_{\pi| \alpha} \left( 1- \frac{1}{N(\pi)}\right) =  g^2 \phi(\alpha).
\)
Here again, all products are over Gaussian primes $\pi$.
 Thus since $N(g\alpha)^2 = g^4 \cdot N(\alpha)^2$, we have that $S_g(K) \le S_1(K)/g^2$. In particular this implies that the sum $\sum_{g=1}^\infty S_g(K)$ converges absolutely and is bounded by $(\pi^2/6) S_1(K)$.

Let $S'(K)$ be the sum 
\[\label{eq:sprimek}
\sideset{}{'}\sum_{0< N(\alpha)\le K} \frac{\phi(\alpha)}{N(\alpha)^2} .
\]
Each summand $\alpha$ can be uniquely sorted into a sum of the form $S_g(K)$ where $g= \gcd(\Re(\alpha),\Im(\alpha))$. Thus $S'(K) \le \sum_{g=1}^\infty S_g(K)$, and thus $S'(K) \le (\pi^2/6) S_1(K)$.

By Lemma \ref{lem:phiiislarge}, there exists a constant $c<1$ such that $\phi(\alpha) \ge c N(\alpha)$ for $\pi c K^2$ of the $\alpha$ satisfying $N(\alpha)\le K$ provided $K$ is large; at least $\pi cK^2/4$ such $\alpha$ are either positive real or lie strictly in the first quadrant. Let us consider how many $\alpha$ satisfy $\sqrt{c} K/3< N(\alpha)\le K$, are either real or lie strictly in the first quadrant, and have $\phi(\alpha) \ge c N(\alpha)$. The larger circle $N(\alpha)\le K$ will contain at least $\pi  c K^2/4 (1+o(1))$ such $\alpha$'s (assuming the bare minimum satisfy the bound), while the inner circle $N(\alpha) \le \sqrt{c} K/3$ could contain up to $\pi c K^2/9 (1+o(1))$ such points (assuming all satisfy the bound). Thus we obtain at least $\pi K^2/9(1+o(1))$ such points in the annulus $\sqrt{c} K/3< N(\alpha)\le K$. 

For each such $\alpha$ in the annulus $\sqrt{c} K/3< N(\alpha)\le K$ satisfying $\phi(\alpha)\ge c N(\alpha)$, we have that the summand $\phi(\alpha) N(\alpha)^{-2}$ is of size $\asymp K^{-2}$ and there are $\asymp K^2$ such points in this annulus. Thus, the sum \eqref{eq:sprimek} on this annulus is bounded from below by a uniform constant. By splitting the whole sum $0< N(\alpha) \le K$ into a series of dyadic intervals, similar to the proof of Lemma \ref{lem:nalphacondiv}, we see that $S'(K)$ must be of size $\gg \log K$. Thus $S_1(K)\gg \log K$, proving the result.
\end{proof}

\subsection{Lower bound on $\alpha(\Sieg^n)$}

In this section, we prove Theorem \ref{thm:sieg1lowerbound}. For improved readability, we first prove the case $n=1$ and then indicate the changes necessary for the general case.
\begin{proof}[Proof of Theorem \ref{thm:sieg1lowerbound} for $n=1$]
We may assume, without loss of generality, that $C<1/2$.

We shall apply Lemma \ref{lemma:Harman}. We let $\mathcal{E}_q$ denote the set 
\(
\mathcal{E}_q := \sideset{}{^*}\bigcup_{r,p} B_{C/|q|}\left( \frac{r}{q},\frac{p}{q} \right),
\)
where the starred union denotes that we are running over all $r,p$ that satisfy the following:
\begin{itemize}
\item $\tilde{r}/q$ is in lowest terms, where $\tilde{r}=r/(1+\ii)$; and,
\item the point $(r/q,p/q)$ is inside a ball of radius $R$ with $R$  large enough that Lemmas \ref{lem:rationalcountbound} and \ref{lem:linearformbound} apply
\end{itemize} We  consider only those $q$ such that $\Re(q),\Im(q)>0$ and $\gcd(\Re(q),\Im(q))=1$; we only allow $q$ to be in the first quadrant in part because multiplying $p,r,q$ all by the same unit does not alter the rational point $(r/q,p/q)$, so we restrict $q$ to be in the first quadrant to ensure uniqueness.

We note the inequality in Lemma \ref{lemma:Harman} is intended to sum over positive integers, but since it makes use of a lim sup, we may replace the sum over integers with a sum over Gaussian integers $q$, $0<N(q)\le K$, with $q$ satisfying the above restrictions. Thus, to show that the set $E$ of points belonging to infinitely many sets $\mathcal{E}_q$ has positive measure, it suffices to prove that 
\[\label{eq:Siegdesiredeq}
 \limsup_{K\to \infty} \left( \sum_{0< N(q)\le K} \norm{\mathcal{E}_q} \right)^2 \left( \sum_{0< N(q),N(q')\le K} \norm{\mathcal{E}_q \cap \mathcal{E}_{q'}} \right)^{-1} > 0.
\]
If we can show \eqref{eq:Siegdesiredeq}, then the set of points $h$ satisfying 
\[\label{eq:refhnear}
d\left( h, \left( \frac{r}{q},\frac{p}{q}\right)\right) \le \frac{C}{|q|},
\]
for infinitely many rational points $(r/q,p/q)$ has positive measure, since this is even less restrictive than $h$ being in the set $E$; and therefore, by Lemma \ref{lem:SiegZO}, the set of points satisfying \eqref{eq:refhnear} for infinitely many rational points has full measure, proving the theorem.

To prove \eqref{eq:Siegdesiredeq}, we first want to know the size of $\norm{\mathcal{E}_q}$. Suppose we have two distinct points of the form $(r/q,p/q), (r'/q,p'/q)$ that are counted by the union that defines $\mathcal{E}_q$. The distance between these two points is given by
\[\label{eq:distinctdistance}
\left| \frac{p}{q} - \frac{r}{q}\overline{\left( \frac{r'}{q} \right)} + \overline{\left( \frac{p'}{q}\right)}\right|^{1/2} =\frac{|\overline{q}p - r\overline{r'} + \overline{p'}q|^{1/2}}{|q|} \ge \frac{1}{|q|}.
\]
Since the radius of each ball is at most half of this, the union is in fact a disjoint union. The volume of each ball is a constant times $(C/|q|)^{4}$ and, by Lemma \ref{lem:rationalcountbound}, the number of such points in the ball of radius $R$ is $\asymp \phi(q)$. (We are assuming $R$ is fixed so may absorb it into the asymptotic estimate.) Thus we have that $\norm{\mathcal{E}_q} \asymp \phi(q) |q|^{-4}$. 

By Proposition \ref{prop:phisumasymp}, we have that \[\label{eq:Siegestimateone}\sum_{0 < N(q) \le K} \norm{\mathcal{E}_q}\asymp \log K.\]

Now we consider $\mu(\mathcal{E}_q\cap \mathcal{E}_{q'})$. If $q=q'$ this is just $\norm{\mathcal{E}_q}$, which we have already determined the size of. We will therefore consider $q\neq q'$ for now. 

Consider two balls $B_{C/|q|}(r/q,p/q)$ and $B_{C/|q'|} (r'/q',p'/q')$ that have non-trivial overlap. By our lowest-terms condition and restriction that $q$ and $q'$ be in the first quadrant, we know that $r/q$ and $r'/q'$ cannot be the same. Thus, as in the proof for the Carnot model, we have
\(
0 < d\left( \left(\frac{r}{q},\frac{p}{q}\right), \left( \frac{r'}{q'},\frac{p'}{q'}\right)\right) \le \frac{C}{|q|}+ \frac{C}{|q'|} \le \max\{ |q|^{-1}, {|q'|}^{-1}\}.
\)
We can compare the usual distance with the infinity norm to get
\[\label{eq:distinctcenterdistance}
d\left( \left(\frac{r}{q},\frac{p}{q}\right), \left( \frac{r'}{q'},\frac{p'}{q'}\right)\right) \asymp \max\left\{  \left| \frac{r}{q}-\frac{r'}{q'} \right|, \left| \Im\left(\frac{p}{q}-\frac{p'}{q'} - \frac{r}{q} \overline{\frac{r'}{q'}} \right) \right|^{1/2} \right\}.
\]
In particular, we must have that 
\(
0 < \left| \frac{r}{q}-\frac{r'}{q'} \right| \le M\max\{ |q|^{-1}, {|q'|}^{-1}\},
\)
for some constant $M>0$. Alternately,
\(
0 < | q'r - r'q | \le M \max\{ |q/q'|, |q'/q|\}
\)
By Lemma \ref{lem:linearformbound}, there are $O(\max\{|q/q'|^2, |q'/q|^2\})$ possibilities for $r$ and $r'$. However, in the proof of Lemma \ref{lem:rationalcountbound} we noted that for a fixed $q,r,R$, there are a bounded number of $p$'s that satisfy $(r/q,p/q)$ in a ball of radius $R$ around the origin, and this estimate is uniform is $q$ and $r$, thus we can simply bound the possibilities for $p,p'$ by a constant. 

Since the size of the intersection is $O(\min\{|q|^{-4},|q'|^{-4}\})$ with the big-Oh constant dependent on $C$, we see that $\norm{\mathcal{E}_q\cap \mathcal{E}_{q'}} \ll |q|^{-2} |q'|^{-2}$. Therefore, by Lemma \ref{lem:nalphacondiv} we have that 
\[\label{eq:Siegestimatetwo}
\sum_{0 < N(q),N(q') \le K} \norm{\mathcal{E}_q\cap \mathcal{E}_{q'}} &\ll \left( \sum_{0 < N(q) \le K} \frac{1}{|q|^2}\right)^2 + \sum_{0 < N(q) \le K} \norm{\mathcal{E}_q} \\&\ll (\log K)^2,\notag
\]
again with the implicit constant being dependent on $C$. Combining \eqref{eq:Siegestimateone} and \eqref{eq:Siegestimatetwo} gives \eqref{eq:distinctdistance} and completes the proof.
\end{proof}

\begin{proof}[Proof of Theorem \ref{thm:sieg1lowerbound} for all $n$]

We will not provide the full proof of the extension of Theorem \ref{thm:sieg1lowerbound} to $n$ dimensions, but will explain the points where the proof differs from the $1$-dimensional case. 

First, recall that rational points in the $n$-dimensional Siegel model are $(r_1/q, r_2/q,  \dots, r_n/q, p/q)$ satisfying $|r_1|^2+|r_2|^2+\dots + |r_n|^2 - 2 \Re (\overline{q}p) = 0$. Unlike in the $1$-dimensional case, this does not require that $1+\ii$ divides anything. So instead we will make a new definition. We will say a rational point is \emph{good} if the following conditions hold
\begin{itemize}
\item $1+\ii$ divides $r_i$, $1\le i \le n$;
\item $\tilde{r}_i = r_i/(1+\ii)$ is relatively prime to $q$ for all $i$; and,
\item $q$ satisfies $\Re(q),\Im(q)>0$ and $\gcd(\Re(q),\Im(q))=1$.
\end{itemize}

The analog of Lemma \ref{lem:rationalcountbound} says that the number of good rational points with denominator $q$ in a ball of radius $R$ should be on the order of $\asymp \phi(q)^n R^{2n+2}$. We note that throughout the proof we can essentially replace all instances of $|r'|^2$ with $|r'_1|^2+|r'_2|^2+\dots+|r'_n|^2$ and this changes nothing in the counts of the number of possible $p$'s.

The next significant change would then be in the statement of Proposition \ref{prop:phisumasymp}, where we now want to show that
\(
\sum_{0<N(\alpha)\le K} \left( \frac{\phi(\alpha)}{N(\alpha)}\right)^n \cdot \frac{1}{N(\alpha)} \asymp \log K.
\)
The proof of the proposition is virtually unchanged.

The proof of the theorem itself has very little modification, but we just note some calculations that change. For starters, in place of \eqref{eq:distinctdistance}, we have the inequality
\(
&d\left( \left( \frac{r_1}{q}, \frac{r_2}{q}, \dots, \frac{r_n}{q}, \frac{p}{q}\right) , \left( \frac{r_1'}{q}, \frac{r_2'}{q}, \dots, \frac{r_n'}{q}, \frac{p'}{q}\right)\right)\\
&= \left| \frac{p}{q} -\frac{r_1}{q} \overline{\left( \frac{r'_1}{q}\right)}-\frac{r_2}{q} \overline{\left( \frac{r'_2}{q}\right)} - \dots -\frac{r_n}{q} \overline{\left( \frac{r'_n}{q}\right)} + \overline{\left( \frac{p'}{q}\right)} \right|^{1/2}\\
&= \left| \frac{p\overline{q} - r_1\overline{r'_1} - r_2 \overline{r'_2} - \dots - r_n \overline{r'_n} + q \overline{p'}}{q\overline{q}}\right|^{1/2}\\
&\ge \frac{1}{|q|}.
\)

Likewise, the right-hand side of \eqref{eq:distinctcenterdistance} would, instead of a single copy of $|r/q-r'/q'|$,  have a copy of $|r_i/q-r'_i/q|$ for each $i$ in $1\le i \le n$. Each of these would contribute $O(\max\{|q/q'|^2, |q'/q|^2\})$ to the count of the number of possible $r_i$'s, while again the count of the $p$'s would be bounded, thus giving a total of $O(\max\{|q/q'|^{2n}, |q'/q|^{2n}\})$ number of possible balls with distinct denominators $q,q'$ and non-trivial intersection. However, as the maximum size of this intersection would be $O(\min\{ |q|^{-2n-2}, |q'|^{-2n-2}\})$, this gives that $\norm{\mathcal{E}_q\cap \mathcal{E}_{q'}}$ would again be $\asymp |qq'|^{-2}$. The remaining details are the same.
\end{proof}

\subsection{Further improvements to the theorem}
\label{sec:moredetailsonquestions}
We would ideally like an asymptotic on the number of rational approximations in $\Sieg^n$, along the lines of Theorem \ref{thm:GeometricLowerBound}. We believe such a problem should be tractable with the methods used here. The main difficulty comes in Lemma \ref{lem:rationalcountbound}: in order to get a more precise asymptotic on the number of rational approximations, we need a more precise asymptotic on the number of rationals.

We can consider other variations of this problem. For $\Sieg^1$, one could consider a set $\mathcal{Q} \subset \Z[\ii]$, and ask whether when we should expect a point $(u,v)$ to have infinitely many rational approximations
\(
d\left( (u,v), \left( \frac{r}{q}, \frac{p}{q}\right)\right) \le \frac{C}{|q|},
\)
with $q\in \mathcal{Q}$. By our proof of Theorem \ref{thm:sieg1lowerbound}, we should expect infinitely many solutions for a positive measure set of $(u,v)$'s provided the following conditions hold: 
first, that $\mathcal{Q}$ is sufficiently dense so that
\(
\sum_{\substack{0< N(q) \le K\\ q \in \mathcal{Q}}} \norm{\mathcal{E}_q} \to \infty,
\)
and, second, that $\phi(q)$ is close to $N(q)$ for a sufficient number of $q\in \mathcal{Q}$ so that 
\(
\sum_{\substack{0< N(q) \le K\\ q \in \mathcal{Q}}} \norm{\mathcal{E}_q} \asymp \sum_{\substack{0< N(q) \le K\\ q \in \mathcal{Q}}} N(q)^{-1}.
\)
 This holds if, for example, we let $\mathcal{Q}$ be the set of Gaussian primes. However, our proof of Lemma \ref{lemma:SiegelRationalInvariance} means we cannot extend this positive measure set to a full measure set unless $k^2 q\in \mathcal{Q}$ for every $k\in \N$ and $q\in \mathcal{Q}$.

We remark that a variant of Theorem \ref{thm:GeometricLowerBound} is possible where one only considers $q$ to be a prime. In fact, the proof is simpler, as all the corresponding $\gcd(q,q')$ terms equal $1$. The resulting asymptotic would have $\log \log N$ in place of $\log N$.

\section{Badly approximable sets in $\Sieg^n$ are winning}\label{sec:badlyapprox}

We now study badly approximable points in Carnot groups, as well as in the Siegel model of $\Heis^n$. By Theorems \ref{thm:GeometricLowerBound} and \ref{thm:sieg1lowerbound}, the sets of badly approximable points have measure zero, as they are exactly the exceptions to those almost-everywhere results.

We first identify some points in non-commutative Carnot groups that deviate from the Diophantine exponent $\alpha(G)=(Q+1)/Q$. Note that the collection of such points is invariant under rational translations.
\begin{thm}
\label{thm:CarnotWeirdness}
Let $G$ be a non-commutative rational Carnot group. Then $G$ contains a linear subset $X$ with a strictly higher Diophantine exponent, and a linear subset $T$ with a strictly lower Diophantine exponent (with respect to the top-dimensional Hausdorff measure on each subset).
\begin{proof}
We work with the first Heisenberg group $\Heis^1$ in the Carnot model $\Carnot^1$, as the general case is analogous. As usual, we work with a weighted infinity metric $d$.

We claim the $x$-axis $X\subset \Carnot^1$ has Diophantine exponent equal to 2. Consider the embedding $\R\hookrightarrow \Carnot^1$ given by $x \mapsto (x,0,0)$. The embedding is an isometry, commutes with dilations, and sends rational numbers to rational points of $\Carnot^1$. In particular, Hausdorff 1-measure on $\R$ and $X$ agree. Lastly, we note that  the coordinate projection to $X$ is a Lipschitz map. Thus, we have that any approximation in $\R$ passes to $X\subset \Carnot^1$, and any approximation by a point of $\Carnot^1(\Q)$ of a point in $X$ gives a rational approximation of a point in $\R$. Thus, $\alpha(X,\Carnot^1)=\alpha(\R)=2$.

We now claim that the $t$-axis $T\subset \Carnot^1$ has Diophantine exponent equal to 1. The only difference is that distance along the $t$-axis is given by $d((0,0,t),(0,0,t'))= \lambda_2 \norm{t-t'}^{1/2}$, so that $T$ is isometric to the real line with the square root metric, which then gives $\alpha(T,\Carnot^1)=\alpha(\R)/2=1$. We note also that if $h\in T$ and $h'\in \Carnot^1$, then the coordinate projection of $h,h'$ to $T$ is Lipschitz, even though the projection may be distance-increasing if the restriction $h\in T$ is removed.
\end{proof}
\end{thm}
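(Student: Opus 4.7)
The plan is to exhibit two explicit one-parameter subgroups of $G$, one in the first layer of the stratification and one in the top layer, and to reduce the Diophantine problem on each to classical Khintchine on $\R$. Since $G$ is non-commutative, the stratification $\mathfrak g=\mathfrak g_1\oplus\cdots\oplus\mathfrak g_s$ has length $s\geq 2$, so both $\mathfrak g_1$ and $\mathfrak g_s$ are nontrivial rational subspaces, and each contains a one-dimensional rational line.

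For the strictly higher exponent, I would take $X:=\exp(\R v)$ for a rational nonzero $v\in\mathfrak g_1$. The strategy is to establish three parallel facts. First, working with the weighted infinity metric $d_{\infty,\Lambda}$ (which is bi-Lipschitz to any Carnot metric), the group law gives $d(\exp(sv),\exp(s'v))=c|s-s'|$, so the parametrization $\R\to X$ is a bi-Lipschitz homeomorphism that preserves top-dimensional Hausdorff measure. Second, the Carnot dilation $\delta_q$ restricts to multiplication by $q$ on $X$ and sends the integer points of $X$ into $G(\Z)$, so that the rational points of $G$ lying on $X$ are $\{\exp((p/q)v):p/q\in\Q\}$, with Carnot height bounded above by the standard denominator of $p/q$. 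Third, the linear coordinate projection $G\to X$ associated to the splitting is Lipschitz with respect to $d_{\infty,\Lambda}$ and maps $G(\Q)$ to $X(\Q)$ without increasing heights. Together these facts let me transfer Carnot approximates of a point $h\in X$ to rational approximates of $h\in\R$ at the same exponent, and conversely; hence $\alpha(X,G)=\alpha(\R)=2$, which is strictly greater than $(Q+1)/Q$ for every $Q>1$.

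For the strictly lower exponent, I would take $T:=\exp(\R w)$ for a rational nonzero $w$ in the top layer $\mathfrak g_s$. The same three ingredients apply, but the scaling is now different. The weighted infinity metric on $T$ is the snowflake $d(\exp(tw),\exp(t'w))=c|t-t'|^{1/s}$, because only the $s$-th layer contributes and it is measured with an $s$-th root in the gauge; the dilation $\delta_q$ acts on $T$ by $t\mapsto q^s t$; and the projection to $T$ remains Lipschitz. After identifying $T$ with $(\R,|\cdot|^{1/s})$, the Diophantine question becomes $|t-p/q|^{1/s}\leq q^{-\alpha}$, equivalently $|t-p/q|\leq q^{-s\alpha}$, and classical Khintchine then yields $\alpha(T,G)=\alpha(\R)/s=2/s\leq 1$, strictly less than $(Q+1)/Q$.

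The only technical point, and what I expect to be the main obstacle, is showing that off-subset rational approximates cannot beat the on-subset exponent. For a point $h\in X$ (respectively $h\in T$) and a rational $\delta_q^{-1}p\in G(\Q)$ whose coordinates transverse to the subset are nonzero, the weighted infinity norm forces those transverse components to vanish once the target exponent exceeds one, reducing the question to the one-dimensional sub-problem already handled. This balancing of layer contributions is precisely the verification the authors carry out for $\Heis^1$ in the Carnot model, and it extends uniformly to any non-commutative rational Carnot group by reading the weighted infinity norm layer by layer.
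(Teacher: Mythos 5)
Your proposal follows the same route as the paper: first-layer and top-layer rational lines, Lipschitz coordinate projections to rule out off-line approximates, and a reduction to the classical exponent $\alpha(\R)=2$. The first-layer claim is sound: the embedding $x\mapsto(x,0,\dots,0)$ is an isometry that commutes with the dilations and identifies Carnot heights with classical denominators, so $\alpha(X,G)=2>\alpha(G)$.

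The top-layer argument, however, inherits a gap that is already present in the paper's own proof: the Carnot height of a rational point on $T$ is not the classical denominator of its $\R$-coordinate. You pass to the condition $|t-p/q|^{1/s}\leq q^{-\alpha}$, treating $q$ simultaneously as the approximant's denominator and as its height. But a rational point $(0,\dots,0,c/q^s)=\delta_q^{-1}(0,\dots,0,c)$ on $T$ has Carnot height $q$, while the classical denominator of its coordinate is $q^s$; equivalently, a unit interval of $T$ contains about $q^s$ rational points of Carnot height at most $q$, not $q$ of them. Since a ball of radius $r$ in the snowflake metric on $T$ has Lebesgue measure $\asymp r^s$, the Borel--Cantelli series for the convergence side is $\sum_q q^s\cdot q^{-s\alpha}$, which converges iff $\alpha>1+1/s$. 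The correct value is therefore $\alpha(T,G)=1+1/s$, not $2/s$. Since $Q>s$ for any non-commutative Carnot group (indeed $Q\ge n_1+s\,n_s\ge s+1$), this is strictly \emph{larger} than $\alpha(G)=1+1/Q$, so the top-layer line is a second example of a subset with strictly higher exponent rather than a lower one. Your write-up is a faithful rendering of the paper's reasoning, but neither, as written, exhibits a linear subset with strictly lower Diophantine exponent; that half of the statement would require a genuinely different construction and argument.
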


Note furthermore that the set $X\subset \Carnot^1$ has no $\height_{\Carnot^1}$-badly approximable points since \emph{any} point of $\R$ has irrationality exponent at least 2 (as one shows, for example, using real continued fractions). Thus, Theorem \ref{thm:GeometryIntro} reduces to:
\begin{thm}
\label{thm:ESK}
Let $K\subset \Sieg^n$ be a complete subset admitting a locally Ahlfors $\delta$-regular measure. Then $\BA_{\Sieg^n}\cap K$ is winning in $K$, and both sets have Hausdorff dimension $\delta$.
\end{thm}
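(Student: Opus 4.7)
The plan is to exhibit a winning strategy for Alice in Schmidt's game played on the complete metric space $(K,d|_K)$, whose outcome is forced to lie in $\BA_{\Sieg^n}\cap K$. Fix a small parameter $\alpha\in(0,1)$ to be chosen later, and let $\beta\in(0,1)$ be arbitrary. At round $k$ Bob plays a ball $B_k=B(x_k,\rho_k)$ with $x_k\in K$ and $\rho_k=r_0(\alpha\beta)^k$, and Alice responds with $A_k=B(y_k,\alpha\rho_k)\subset B_k$ for some $y_k\in K$. We will show that Alice can ensure the unique point $h\in\bigcap_k A_k$ satisfies $d(h,q^{-1}\vec p)\geq\alpha\Norm{q}^{-1}$ for every $q^{-1}\vec p\in\Sieg^n(\Q)$, putting $h\in\BA_{\Sieg^n}$.

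Group rationals by scale: declare $q^{-1}\vec p$ to be \emph{active at round $k$} if $\Norm{q}^{-1}\in(\rho_{k+1},\rho_k]$, so that every rational with $\Norm{q}^{-1}\leq\rho_0$ is active in exactly one round. Alice's strategy is, at each round $k$, to choose $y_k$ so that $A_k$ is disjoint from the $\alpha\Norm{q}^{-1}$-neighborhood of every round-$k$ active rational. Induction on rounds then forces $h$ to avoid all such neighborhoods simultaneously and hence $h\in\BA_{\Sieg^n}$ with implicit constant $\alpha$.

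The essential combinatorial input is the separation bound
\[d(q^{-1}\vec p,\,{q'}^{-1}\vec{p'})\;\geq\;(\Norm{q}\Norm{q'})^{-1/2}\]
for distinct $q^{-1}\vec p,{q'}^{-1}\vec{p'}\in\Sieg^n(\Q)$. This follows by expanding the gauge norm of $(q^{-1}\vec p)^{-1}*{q'}^{-1}\vec{p'}$ and observing that its numerator is a nonzero element of $\Z[\ii]$, exactly as in the separation computation surrounding \eqref{eq:distinctdistance} in the proof of Theorem~\ref{thm:sieg1lowerbound}. Round-$k$ active rationals all have $\Norm{q}\asymp\rho_k^{-1}$, so any two such rationals are separated by a uniform constant multiple of $\rho_k$. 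The doubling property of $\Sieg^n$ (which follows from its Ahlfors regularity) then bounds the number $N=N(\alpha,\beta)$ of round-$k$ active rationals intersecting the $2\rho_k$-neighborhood of $B_k$ uniformly in $k$.

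To produce $y_k$, note that the forbidden region for its choice is a union of at most $N$ ambient balls, each of radius at most $\alpha\Norm{q}^{-1}+\alpha\rho_k\leq 2\alpha\rho_k$. By the local Ahlfors $\delta$-regularity of $\mu$,
\(\mu\bigl(K\cap B(x_k,(1-\alpha)\rho_k)\bigr)\geq C_{\mathrm{AR}}^{-1}((1-\alpha)\rho_k)^\delta,\)
whereas the forbidden union has $\mu$-measure at most $N\,C_{\mathrm{AR}}(2\alpha\rho_k)^\delta$. Choosing $\alpha$ small enough that $NC_{\mathrm{AR}}^2(2\alpha)^\delta<(1-\alpha)^\delta$ produces a legal center $y_k\in K$ in the complement. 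This proves $\BA_{\Sieg^n}\cap K$ is $\alpha$-winning in $K$. Combined with the standard fact that Schmidt $\alpha$-winning subsets of a complete Ahlfors $\delta$-regular metric space have full Hausdorff dimension $\delta$, this completes the proof; the matching upper bound $\dim_H(\BA_{\Sieg^n}\cap K)\leq\delta$ is automatic from $\BA_{\Sieg^n}\cap K\subset K$. The main obstacle is isolating the uniform separation bound for Siegel rationals in terms of their denominators; once that is in place, the Ahlfors hypothesis and a measure-theoretic pigeonhole close the argument.
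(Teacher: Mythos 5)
Your high-level plan (Schmidt game on $K$, the separation bound $d(q^{-1}\vec p,{q'}^{-1}\vec p')\geq(\Norm{q}\Norm{q'})^{-1/2}$, and the fact that winning sets in complete Ahlfors $\delta$-regular spaces have Hausdorff dimension $\delta$) is the same as the paper's, but the middle of your argument has a genuine gap that the paper's proof is specifically engineered to avoid.

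You declare a rational \emph{active at round $k$} when $\Norm{q}^{-1}\in(\rho_{k+1},\rho_k]$, and then try to dodge \emph{all} active rationals by a volume-counting argument. The problem is the count. Active rationals satisfy $\Norm{q}\in[\rho_k^{-1},\rho_{k+1}^{-1})$, so the worst-case pairwise separation guaranteed by $(\Norm{q}\Norm{q'})^{-1/2}$ is only $\rho_{k+1}=(\alpha\beta)\rho_k$. Since Bob's ball $B_k$ has radius $\rho_k$, a neighborhood of $B_k$ can contain on the order of $(\alpha\beta)^{-Q}$ active rationals (where $Q=2n+2$ is the ambient Hausdorff dimension of $\Sieg^n$); this is the doubling estimate your ``$N=N(\alpha,\beta)$'' hides. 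Your pigeonhole inequality $N C_{\mathrm{AR}}^2(2\alpha)^\delta<(1-\alpha)^\delta$ then becomes, up to constants, $\alpha^{\delta-Q}<\mathrm{const}\cdot\beta^{Q}$. Since $\delta\le Q$ always, the left side is $\geq 1$ for $\alpha<1$, while the right side tends to $0$ as $\beta\to 0$. In the Schmidt game $\alpha$ is fixed \emph{before} $\beta$ is chosen adversarially, so no admissible $\alpha$ exists and the strategy fails for small $\beta$.

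The paper sidesteps this entirely via Lemma~\ref{lemma:SchmidtUniqueness}: after discarding finitely many early moves so that the first ball radius $r_1$ satisfies $2r_1+2\epsilon<R^{-1}$ (with $R=(\alpha\beta)^{-1}$), \emph{at most one} rational with $R^{i-1}\le\Norm{q}<R^i$ can be $\epsilon/\Norm{q}$-close to $B_i$. The arithmetic is tight: any two such rationals are separated by $>R^{-i}$, while the forbidden region near $B_i$ has diameter $(2\epsilon+2r_1)R^{-i+1}<R^{-i}$. With only one point to dodge, the Ahlfors regularity (Lemma~\ref{lemma:BobbySinger}) gives a good ball with an $\alpha$ depending only on the regularity constant $L$ and $\delta$, uniformly over $\beta$. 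The ingredient you are missing is exactly this uniqueness: you need to match the ball scale to the denominator range so that the separation guarantee dominates the ball diameter, rather than trying to outcount rationals whose number is not controllable in $\beta$.
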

We provide definitions and preliminary results for Theorem \ref{thm:ESK} in \S \ref{sec:metric}, and then prove it in \S \ref{sec:winning}.

\subsection{Schmidt games and metric spaces}
\label{sec:metric}
\begin{defi}[Schmidt game \cite{MR0195595}]
\label{defi:winning}
A set $A\subset X$ in a complete metric space $X$ in (Schmidt)-winning if for the following game between two players White and Black,  there exists $\alpha \in (0,1)$ such that for any $\beta \in (0,1)$, White has a winning strategy in $(\alpha, \beta)$ Schmidt game. The game is played as follows: Black chooses a (closed) metric ball $B_1\subset X$ of radius $\rho_1$. Then White chooses a metric ball $W_1\subset B_1$ of radius $\alpha \rho_1$. Then Black chooses a metric ball $B_2\subset W_1$ of radius $\beta \alpha \rho_1$, and so on. If, in the end $\cap_i B_i \subset A$, then White wins. Otherwise, Black wins.
\end{defi}

\begin{defi}
Let $(X,d)$ be a metric space with a measure $\mu$. One says that $\mu$ is (Ahlfors) $\delta$-regular if there exists $L\geq 1$ such that for any $\mu$-measurable subset $A\subset X$, one has $L^{-1}\text{diam}(A)^\delta \leq \mu(A) \leq L \text{diam}(A)^\delta$. 

If the volume estimate on $A$ is only satisfied for $\text{diam}(A)<r_0$ for some $r_0>0$, $(X, d, \mu)$ is said to be \emph{locally} (Ahlfors) $\delta$-regular.
\end{defi}

The Haar measure on the Heisenberg group $\Heis^n$ is $Q$-regular for $Q=2n+2$. Most reasonable subsets of $\Heis^n$ or $\R^n$, such as submanifolds, Cantor sets, and von Koch snowflakes, are also locally Ahlfors regular.

\begin{example}[Cantor set]
Let $X$ be a complete metric space, $0<\alpha<1$, $n>1$, and $C_0\subset X$ a ball of radius $r$. Select (if possible) $n$ disjoint balls of radius $\alpha r$ inside $C_0$ and set $C_1$ to be their disjoint union. Proceed recursively, replacing each ball of $C_i$ with $n$ balls of radius $\alpha^{i+1}r$, so that $C_{i+1}$ is a disjoint union of $n^{i+1}$ balls. The intersection $\mathcal C(\alpha, n, r) := \cap_{i=0}^\infty C_i$ is a \emph{Cantor set} and is homeomorphic to the standard middle-third Cantor set.
\end{example}

It is easy to compute the dimension of a Cantor set by a H\"older mapping to a corresponding Cantor set in $\R$. One has:
\begin{lemma}
Let $\mathcal C=\mathcal C(\alpha, n, r_0)$ be a Cantor set in a complete metric space. Then $\mathcal C$ has Hausdorff dimension $-\log(n)/\log(\alpha)$.
\end{lemma}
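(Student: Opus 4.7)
The plan is to establish matching bounds $\dim_H \mathcal C \le \delta$ and $\dim_H \mathcal C \ge \delta$, where $\delta := -\log n / \log \alpha$ (so that $n\alpha^\delta = 1$).

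\textbf{Upper bound.} The containment $\mathcal C \subseteq C_k$ provides a cover by $n^k$ balls of diameter $2\alpha^k r_0$, yielding for each $s > \delta$,
\[
\mathcal H^s_{2\alpha^k r_0}(\mathcal C) \le n^k (2\alpha^k r_0)^s = (2 r_0)^s (n\alpha^s)^k \xrightarrow{k \to \infty} 0,
\]
since $n\alpha^s < 1$. Hence $\mathcal H^s(\mathcal C) = 0$ for all $s > \delta$, so $\dim_H \mathcal C \le \delta$.

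\textbf{Lower bound.} Following the hint in the excerpt, I would parametrize $\mathcal C$ by the symbolic space $\Sigma = \{1, \ldots, n\}^{\mathbb N}$ via the bijection $\phi: \sigma \mapsto \bigcap_k B_{\sigma|_k}$, well-defined by completeness of $X$ and vanishing diameters. Then I would build a reference Cantor set $\mathcal C' \subset \mathbb R^m$ (with $m$ taken large enough to fit $n$ disjoint balls with definite separation) from the \emph{same} combinatorial data, arranging the level-$k$ balls to have radius $\alpha^k r_0'$ and to be pairwise separated by at least $c\alpha^k$ for some fixed $c > 0$. For such a self-similar $\mathcal C'$, Moran's formula (equivalently, the mass distribution principle applied to the Bernoulli measure with $\mu'(B_{\sigma|_k}) = n^{-k}$) gives $\dim_H \mathcal C' = \delta$. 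The symbolic coding then induces a bijection $\psi: \mathcal C' \to \mathcal C$, and when codings of $x', y' \in \mathcal C'$ first disagree at level $k+1$, the images $\psi(x'), \psi(y')$ share a common level-$k$ ball of $\mathcal C$, giving $d(\psi(x'), \psi(y')) \le 2\alpha^k r_0$, while the separation in $\mathcal C'$ gives $d(x', y') \ge c\alpha^k$; hence $\psi$ is Lipschitz. Under the analogous separation in $\mathcal C$, the inverse $\psi^{-1}$ is also Lipschitz, yielding $\dim_H \mathcal C = \dim_H \mathcal C' = \delta$.

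The main obstacle is this lower bound, since the lemma's definition only requires the balls at each level to be disjoint (allowing in principle mutual tangency, which could force $\dim_H \mathcal C < \delta$). In every intended application---notably the proof of Theorem~\ref{thm:ESK}, where the Cantor set is built from a Schmidt game play---the construction furnishes uniform scale-invariant separation at each level, so the bi-Lipschitz comparison with the model $\mathcal C' \subset \mathbb R^m$ applies and the lemma holds as stated.
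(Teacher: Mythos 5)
Your upper bound (covering $\mathcal C$ by the $n^k$ level-$k$ balls) is correct and is the standard half of the argument. Your lower bound via comparison with a model self-similar Cantor set $\mathcal C'$ in Euclidean space is in the same spirit as the paper's remark about a H\"older map to a Cantor set in $\mathbb R$; whether one matches the contraction ratio (bi-Lipschitz, in $\mathbb R^m$) or shrinks it (merely H\"older, in $\mathbb R$), the dimension count comes out the same. You are also right to flag the real gap: the comparison requires a uniform, scale-invariant lower bound on the separation between sibling level-$k$ pieces of $\mathcal C$, and the lemma's hypotheses---merely \emph{disjoint} balls, in an arbitrary complete metric space---do not supply one. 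The paper's one-line remark glosses over exactly this point.

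Your proposed repair is off in one particular. You assert that in the proof of Theorem~\ref{thm:ESK} the Schmidt-game construction ``furnishes uniform scale-invariant separation at each level,'' but Lemma~\ref{lemma:BobbySinger} as stated produces only \emph{disjoint} balls of radius $\beta r$, with no lower bound on their mutual distances, so the Cantor set built there inherits only disjointness. Two clean fixes are available. One is to strengthen Lemma~\ref{lemma:BobbySinger} (at the cost of a constant) by taking the ball centers to form a maximal $3\beta r$-separated set; this produces the separation your bi-Lipschitz map needs. The other, which is what the hypotheses actually in force justify, bypasses the model space: where the lemma is applied the ambient set $K$ is locally Ahlfors regular and hence doubling, and for Cantor sets in a doubling space the lower bound follows directly from the mass distribution principle applied to the Bernoulli measure $\nu$ giving mass $n^{-k}$ to each level-$k$ piece. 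Indeed, for $\alpha^{k+1}r_0 \le r < \alpha^k r_0$, the level-$k$ pieces meeting $B(x,r)$ sit in pairwise disjoint balls of radius $\alpha^k r_0$ whose centers all lie in $B(x,3\alpha^k r_0)$; the doubling property bounds the number of such centers by a constant independent of $k$, giving
\[
\nu\bigl(B(x,r)\bigr) \ll n^{-k} \asymp r^{-\log n/\log\alpha},
\]
and the lower bound follows with no separation hypothesis at all.
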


The following lemma (easily proven using a volume estimate argument) allows us to find balls in a locally Ahlfors regular space.
\begin{lemma}
\label{lemma:BobbySinger}
Let $X$ be a $\delta$-regular space with implicit constant $L$, $B_r(x)\subset X$, and $\beta\in(0,1]$. Then $B_r(x)$ contains at least $L^{-2}(6\beta)^{-\delta}$ disjoint balls of radius $\beta r$.
\end{lemma}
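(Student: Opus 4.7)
The plan is to run a standard maximal packing argument and then extract the count from the two-sided volume estimates given by Ahlfors regularity. First I would choose a maximal pairwise disjoint family of closed balls $B_{\beta r}(x_1),\ldots, B_{\beta r}(x_N)$ with centers $x_i\in B_r(x)$; such a family exists by Zorn's lemma, and $N$ is finite since each chosen ball has $\mu$-measure bounded below by a positive constant multiple of $(\beta r)^\delta$ while $\mu(B_r(x))$ is finite.

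Second, I would verify the ``three-fold cover'' property: the enlargements $\{B_{3\beta r}(x_i)\}_{i=1}^N$ cover $B_r(x)$. Indeed, if some $y\in B_r(x)$ were to lie outside every $B_{3\beta r}(x_i)$, then $d(y,x_i)>2\beta r$ for all $i$, so $B_{\beta r}(y)$ would be disjoint from each $B_{\beta r}(x_i)$ and could be adjoined to the family, contradicting maximality.

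Third, I would combine the cover with the diameter form of Ahlfors regularity. Assuming $r$ lies below the local regularity threshold (which is the regime in which the lemma is applied), the lower bound gives $\mu(B_r(x))\geq L^{-1}r^\delta$, and the estimate $\operatorname{diam}(B_{3\beta r}(x_i))\leq 6\beta r$ yields $\mu(B_{3\beta r}(x_i))\leq L(6\beta r)^\delta$. Summing over the cover,
\[
L^{-1}r^\delta \;\leq\; \mu(B_r(x)) \;\leq\; \sum_{i=1}^N \mu(B_{3\beta r}(x_i)) \;\leq\; NL(6\beta r)^\delta,
\]
and rearranging produces the claimed bound $N\geq L^{-2}(6\beta)^{-\delta}$.

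The only genuinely delicate point is the lower estimate $\mu(B_r(x))\geq L^{-1}r^\delta$ extracted from the diameter-based form of regularity stated in the paper; this requires $\operatorname{diam}(B_r(x))\geq r$, which holds automatically in the geodesic Heisenberg and Carnot settings in which the lemma will be invoked, but which would need to be noted explicitly (for instance, by picking any auxiliary point $y$ with $d(x,y)$ close to $r$ and using the triangle inequality together with the fact that $B_r(x)$ has positive measure hence contains points beyond a single atom). Everything else is a purely combinatorial packing argument, so this volume comparison is the sole step that needs careful bookkeeping.
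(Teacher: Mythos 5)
Your proposal is correct and is exactly the kind of ``volume estimate argument'' the paper has in mind; the paper states the lemma without a written proof, so the maximal-packing plus bounded-overlap covering argument you describe is the natural and expected route, and the arithmetic $L^{-1}r^\delta \le \mu(B_r(x)) \le N\,L\,(6\beta r)^\delta$ does give $N \ge L^{-2}(6\beta)^{-\delta}$.

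One small point worth flagging: your packing picks centers $x_i\in B_r(x)$, so the balls $B_{\beta r}(x_i)$ may protrude slightly outside $B_r(x)$, i.e.\ you are showing that $B_r(x)$ contains $N$ disjoint balls with centers in $B_r(x)$ rather than $N$ disjoint balls that are subsets of $B_r(x)$. In the paper's application (Theorem~\ref{thm:winning}, choosing White's move $W_{i+1}\subset B_i$ in the Schmidt game) one genuinely needs the subballs to sit inside $B_i$. This is easily repaired: place the centers in $B_{(1-\beta)r}(x)$ instead, cover that shrunken ball by the doubled balls $B_{2\beta r}(x_i)$, and run the same volume comparison, which yields $N\ge L^{-2}\bigl(\tfrac{1-\beta}{4\beta}\bigr)^{\delta}$. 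This matches or beats $L^{-2}(6\beta)^{-\delta}$ for $\beta\le 1/3$, and for $\beta>1/3$ the asserted count is already at most $1$ (since $L\ge 1$), so the lemma is trivial there. Your observation about needing $\operatorname{diam}(B_r(x))\ge r$ to invoke the diameter form of regularity is a fair caveat and is indeed harmless in the geodesic settings where the lemma is used.
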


\begin{lemma}
Let $S$ be a winning set in a complete locally Ahlfors $\delta$-regular metric space $X$. Then it contains a Cantor set of Hausdorff dimension $\delta'$ arbitrarily close to $\delta$. In particular, $S$ has dimension $\delta$.
\begin{proof}
Note that a $\delta$-regular space has Hausdorff dimension at most $\delta$, so it suffices to demonstrate the lower bound.

Fix $(\alpha, \beta)$ so that White has a winning strategy and set $C_0=X$ (assuming without loss of generality that it is a metric ball of radius 1).

We proceed by analyzing Black's possible strategies. We assumed that every ball of radius $r$ contains $N$ disjoint balls of radius $\beta r$. For each choice of ball, White responds with a ball of radius $(\alpha \beta)r$. Set $C_1$ to be the disjoint union of these balls. At the next stage, we have $N^2$ disjoint balls to work with, and the choices are refined by White's strategy, yielding the disjoint union $C_2$. Proceeding inductively, we obtain a Cantor set $\mathcal C = \mathcal C(\alpha\beta, N, 1)$. Since White was always making moves towards a winning strategy, $\mathcal C\subset S$.

The Cantor set $\mathcal C$ we built has Hausdorff dimension $\delta'=\frac{-\log N}{\log(\alpha\beta)}$.  Recall that by the definition of winning sets we are free to vary $\beta$, so that our argument would be complete by taking $\beta \rightarrow 0$ if we could always choose $N \approx \beta^{-\delta}$. That is, it remains to show that every ball $B(x,r)$ of radius $r$ in $X$ contains (up to multiplicative error) $\beta^{-\delta}$ disjoint balls of radius $\beta r$. This is provided by Lemma \ref{lemma:BobbySinger}.
\end{proof}
\end{lemma}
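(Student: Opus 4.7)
The upper bound $\dim_H S \le \delta$ will be immediate from Ahlfors $\delta$-regularity of $X$, so the real content is to produce, inside $S$, Cantor subsets whose dimension approaches $\delta$. The plan is to exploit the fact that White has a winning strategy against \emph{every} Black-parameter $\beta \in (0,1)$: this freedom lets us treat Black's side of the game as a branching process rather than a single play, and the amount of branching we can extract at each level is controlled by Ahlfors regularity via Lemma \ref{lemma:BobbySinger}.

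First I would fix $\alpha$ so that White wins all $(\alpha,\beta)$-games, and fix a small $\beta$ to be sent to zero at the end of the argument. Starting from a root ball $B_0$ of radius $1$ (which we may assume lies below the local-regularity radius, after at most one contraction), I would proceed inductively: at each stage, after White's responding ball $W_i$ of radius $r_i$, invoke Lemma \ref{lemma:BobbySinger} to select $N \ge L^{-2}(6\beta)^{-\delta}$ pairwise disjoint balls of radius $\beta r_i$ inside $W_i$, each of which is a legal Black move, and let the tree branch over all of them. Running White's fixed winning strategy independently along each branch then yields at stage $k$ a disjoint union $C_k$ of $N^k$ balls of radius $(\alpha\beta)^k$. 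The intersection $\bigcap_k C_k$ is a Cantor set $\mathcal C(\alpha\beta, N, 1)$, and along every infinite branch the sequence of nested balls is a legal play that White wins, so every point of this Cantor set lies in $S$.

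Applying the preceding lemma on Cantor-set dimension, this $\mathcal C$ has Hausdorff dimension
\[
\delta' \;=\; \frac{-\log N}{\log(\alpha\beta)} \;=\; \frac{\delta\log(6\beta) + 2\log L}{-\log(\alpha\beta)},
\]
which, with $\alpha$ and $L$ fixed, tends to $\delta$ as $\beta \to 0$. Since $\dim_H S \ge \dim_H \mathcal C$ and $\delta'$ can be made arbitrarily close to $\delta$, the lower bound follows, matching the regularity upper bound.

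The main obstacle I anticipate is conceptual rather than computational: Black's game is formally a single linear play, whereas the construction above runs White's strategy along every branch of a tree at once. The standard way to justify this is to note that White's strategy is a function of the partial game history, so it may be applied consistently and independently to each branch, producing a well-defined nested family with uniform shrinkage ratio $\alpha\beta$ at each level. A small technical check is that $N$ is an integer at least $2$ once $\beta$ is small enough relative to $L$, so that the Cantor construction actually branches; this is automatic since we send $\beta \to 0$ only after choosing $N$.
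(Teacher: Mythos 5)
Your argument reproduces the paper's proof almost verbatim: fix the winning parameter $\alpha$, branch Black's moves $N \approx L^{-2}(6\beta)^{-\delta}$ ways per level via Lemma~\ref{lemma:BobbySinger}, run White's strategy along every branch to get a Cantor set $\mathcal C(\alpha\beta,N,1)\subset S$, and send $\beta\to 0$; the only addition is your (correct and welcome) explicit remark that White's strategy, being a function of the game history, can be applied consistently on each branch of the tree. One small slip: since $\delta' = -\log N/\log(\alpha\beta)$ and $-\log N = 2\log L + \delta\log(6\beta)$, the displayed formula should read $\delta' = \bigl(2\log L + \delta\log(6\beta)\bigr)/\log(\alpha\beta)$, without the extra minus sign in the denominator (both numerator and denominator are negative for small $\beta$, and the ratio tends to $\delta$ as claimed).
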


\subsection{$\BA_{\Sieg^n}$ is winning}
\label{sec:winning}
We now prove Theorem \ref{thm:ESK} following \cite{MR2610978}, completing the proof of Theorem \ref{thm:GeometryIntro} .

\begin{remark}
A reader of \cite{MR2610978} should take in their proof of Lemma 4 that $R=\sqrt{\alpha\beta}^{-1}$ and $(2\epsilon+2r_1)<R^{-2}$. The results of \cite{MR2610978} must then be amended to speak of the properties of $\BA_\C$ rather than $\BA_{\C}^\epsilon$ for sufficiently small $\epsilon$, as the choice of $\epsilon$ depends on the game parameter $\beta$.
\end{remark}

\begin{thm}
\label{thm:winning}
Suppose $K\subset \Sieg^n$ admits a locally Ahflors regular measure. Then $\BA\cap K$ is a winning set.
\end{thm}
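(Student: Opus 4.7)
The plan is to adapt the Schmidt-game argument of \cite{MR2610978} to the Heisenberg Siegel setting, exploiting the separation geometry of $\Sieg^n(\Q)$ established in Section~\ref{sec:Siegeln} together with the packing bound of Lemma~\ref{lemma:BobbySinger}. Fix a game parameter $\alpha \in (0,1)$ small, depending only on $L$ and $\delta$ (explicitly $\alpha < 1/(18 L^{4/\delta})$ suffices). For each $\beta \in (0,1)$ chosen by Black, set $R := (\alpha\beta)^{-1/2}$ so that Black's $k$th ball has radius $\rho_k = \rho_1 R^{-2(k-1)}$, and choose $\epsilon = \epsilon(\alpha,\beta) > 0$ satisfying $\epsilon \leq \alpha^2\beta/8$. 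The aim is to construct a terminal point $h_\infty \in K$ with $d(h_\infty, q^{-1}p) \geq \epsilon\norm{q}^{-1}$ for every $q^{-1}p \in \Sieg^n(\Q)$, placing $h_\infty \in \BA_{\Sieg^n} \cap K$.

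The first ingredient is a separation bound for Siegel rationals: a direct computation with the Heisenberg group law (generalizing the same-denominator case of \eqref{eq:distinctdistance} to the mixed-denominator case already implicit in \eqref{eq:distinctcenterdistance}) yields $d(q^{-1}p, q'^{-1}p') \geq (\norm{q}\norm{q'})^{-1/2}$ for distinct reduced rationals. Partition $\Sieg^n(\Q)$ into shells $\mathcal R_k := \{q^{-1}p : \epsilon\norm{q}^{-1} \in (\alpha\rho_{k+1}/4, \alpha\rho_k/4]\}$, so the forbidden neighborhood $B(q^{-1}p, \epsilon\norm{q}^{-1})$ of a member of $\mathcal R_k$ has radius at most $\alpha\rho_k/4$. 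The separation bound forces pairwise distances in $\mathcal R_k$ to be at least $\alpha^2\beta\rho_k/(4\epsilon) \geq 2\rho_k$, so at most one rational in $\mathcal R_k$ has its forbidden neighborhood intersecting $B_k$. Rationals not in any shell ($\epsilon\norm{q}^{-1} > \alpha\rho_1/4$) are finitely many and are dealt with by Black's initial moves once the starting radius is suitably small.

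At stage $k$, White must therefore place a ball $W_k \subset B_k$ of radius $\alpha\rho_k$ disjoint from at most one forbidden ball of radius $\leq \alpha\rho_k/4$. Lemma~\ref{lemma:BobbySinger} provides at least $L^{-2}(6\alpha)^{-\delta}$ disjoint candidate balls inside $B_k$, while a volume comparison shows that the forbidden ball (of radius smaller than the candidates) can intersect at most $L^2 \cdot 3^\delta$ of them; the choice of $\alpha$ makes the former strictly exceed the latter, so at least one legal $W_k$ exists. Nesting these choices yields $h_\infty \in \BA_{\Sieg^n}^\epsilon \cap K$. The main obstacle is parameter bookkeeping under the Schmidt-game constraint that $\alpha$ must be fixed before seeing $\beta$: the resolution, following \cite{MR2610978}, is to absorb all $\beta$-dependence into $\epsilon$, which is legitimate since winning only requires the terminal point to lie in \emph{some} $\BA_{\Sieg^n}^\epsilon$. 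Locally Ahlfors regular $K$ is handled by discarding initial stages until $\rho_k$ falls below the regularity scale $r_0$, and the full Hausdorff dimension conclusion of Theorem~\ref{thm:ESK} then follows from the Cantor-set embedding recorded in \S~\ref{sec:metric}.
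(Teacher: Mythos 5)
Your argument is essentially the same as the paper's: play the Schmidt game targeting $\BA^\epsilon_{\Sieg^n}$, use the Heisenberg-algebraic separation bound $d(q^{-1}p, q'^{-1}p') \geq (\norm{q}\norm{q'})^{-1/2}$ to show at most one rational per round is dangerous, and use Lemma~\ref{lemma:BobbySinger} plus a volume comparison to show White can dodge it. The one cosmetic difference is that you stratify rationals into shells by the size of their forbidden radius $\epsilon\norm{q}^{-1}$, while the paper's Lemma~\ref{lemma:SchmidtUniqueness} stratifies by denominator ranges $R^{i-1}\le\norm{q}<R^i$; these are the same partition under a change of variable, and both approaches then run the identical triangle-inequality-vs-separation contradiction. (The paper also imposes the cleaner condition $2r_1+2\epsilon < \alpha\beta$ on the starting radius rather than absorbing it into a shell definition.)

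There is one small quantitative slip. You derive pairwise separation $\geq \alpha^2\beta\rho_k/(4\epsilon) \geq 2\rho_k$ within a shell $\mathcal R_k$ and conclude that at most one forbidden neighborhood meets $B_k$. But if two rationals in $\mathcal R_k$ both have forbidden balls (radius $\leq \alpha\rho_k/4$) intersecting $B_k$ (radius $\rho_k$), they are within $2\rho_k + \alpha\rho_k/2$ of each other, which is \emph{not} excluded by a separation of exactly $2\rho_k$. Tightening $\epsilon \leq \alpha^2\beta/16$ (or equivalently asking for separation $\geq 4\rho_k$, as the paper effectively does via $2r_1+2\epsilon < R^{-1}$) closes the gap. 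With that fix, your argument goes through and yields the same feasibility condition $L^4(C\alpha)^\delta < 1$ for an absolute constant $C$, matching the paper's $L^4(12\alpha)^\delta<1$ up to the constant.
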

\begin{proof}
We will play the Schmidt game from Definition \ref{defi:winning} in the metric space $K$ with constants $(\alpha, \beta)$ and with the smaller set
$$\BA_{\Sieg^n}^\epsilon = \{ (u,v)\in \Sieg \st d((u,v), (r/q, p/q)>\epsilon/\norm{q}\text{ for all }r,p,q\in \Z[\ii]\}.$$
We make the following starting assumptions:
\begin{itemize}
\item We work, for simplicity, with $\Sieg^1$. 
\item The measure on $K$ is Ahlfors $\delta$-regular with implicit constant $L$.
\item $\alpha>0$ and satisfies $1 > L^{4}(12\alpha)^\delta$.
\item $\beta\in (0,1)$ is arbitrary.
\item $R=(\alpha\beta)^{-1}$.
\item $\epsilon \in (0,1)$ and $r_1\in (0,1)$ satisfy $2r_1 +2\epsilon<R^{-1}=\alpha\beta$.
\item Balls chosen by Black (of radius $r_i=R^{-i+1}r_1$) are denoted $B_i$.
\end{itemize}

We now provide a strategy for White such that $\cap_i B_i\in\BA_{\Sieg^n}^\epsilon$, and furthermore
\begin{align}
\label{eq:gamestrategy}
d\left(B_{i+1},\left(\frac{p}{q},\frac{r}{q}\right)\right)>\frac{\epsilon}{\norm{q}} \text{ if }\norm{q}<R^i.
\end{align}

By ignoring the first few moves, we may assume that the first ball $B_1$ chosen by Black has radius $r_1$ satisfying $2r_1+2\epsilon < R^{-1}$. From hereon, we avoid rational points on every turn. Note that condition \eqref{eq:gamestrategy} is trivial for the base case $i=0$, so it suffices to provide the inductive step.

Suppose Black has just chosen the ball $B_i$ of radius $r_i=r_1R^{-i+1}$, and we are to choose a ball inside $B_i$ of radius $\alpha r_i$. To satisfy \eqref{eq:gamestrategy}, we need to avoid all rational points near $B_i$ with denominator in the range
$$R^{i-1}\leq \norm{q} < R^{i}.$$

\begin{lemma}
\label{lemma:SchmidtUniqueness}
At most one such point exists.
\end{lemma}
\begin{proof}Suppose we have two points $(p/q,r/q)$ and $(p'/q', r'/q')$ that lie close to $B_i$. That is, there are $h,h'\in B_i$ such that
$$ d\left(h,\left(\frac{p}{q},\frac{r}{q}\right)\right)<\frac{\epsilon}{\norm{q}}
\text{ \ and \ } d\left(h',\left(\frac{p'}{q'},\frac{r'}{q'}\right)\right)<\frac{\epsilon}{\norm{q'}}.$$

By the triangle inequality and diameter of $B_i$, we compute that
$$d\left(
\left(\frac{r}{q}, \frac{p}{q}\right),
\left(\frac{r'}{q'}, \frac{p'}{q'}\right)\right) \leq \frac{\epsilon}{\norm{q}}+\frac{\epsilon}{\norm{q'}}+d(h,h') \leq 2\epsilon R^{-i+1} + 2 r_1 R^{-i+1} < R^{-i}.
$$

On the other hand, we compute 
\begin{align*}d\left(
\left(\frac{r}{q}, \frac{p}{q}\right),
\left(\frac{r'}{q'}, \frac{p'}{q'}\right)\right)&= 
\Norm{\left(-r/q, \overline{p/q}) * (r'/q', p'/q')\right)}\\
&=
\Norm{\left(-r/q+r'/q', \overline{p/q}+{p'/q'}+(\overline{-r/q})({r'/q'})\right)}\\
&=
\norm{\overline{p/q}+{p'/q'}+(\overline{-r/q})({r'/q'})}^{1/2}\\
&=\frac{\norm{\overline{p}q'+p'\overline{q}-\overline{r}{r'}}^{1/2}}{\norm{\overline{q}q'}^{1/2}} \geq\norm{qq'}^{-1/2}\geq R^{-i},
\end{align*}
where the last reduction follows from the $\overline{p}q'+p'\overline{q}-\overline{r}{r'}$ is a non-zero Gaussian integer --- as long as the two rational points are distinct.

The two calculations are contradictory, so the lemma is proven.
\end{proof}

We continue with the proof of Theorem \ref{thm:winning}. By Lemma \ref{lemma:SchmidtUniqueness}, we have at most one point to avoid as we choose ball for White. If there is no point to avoid, we choose arbitrarily. Otherwise, we denote the point by  $(p/q,r/q)$.

By Lemma \ref{lemma:BobbySinger}, $B_i$ contains $L^{-2}(6\alpha)^{-\delta}$ disjoint balls of radius $\alpha r_1 R^{-i+1}$. We would like to choose a ball all of whose points are at distance at least $\epsilon \norm{q}^{-1}$ from $(p/q,r/q)$. It would be impossible for us to make a choice if and only if  the disjoint balls were all within distance $\epsilon\norm{q}^{-1}+2\alpha r_1 R^{-i+1}$ of the point $(p/q,r/q)$. 

Now, the enlarged region we seek to avoid has volume at most
\begin{align*}L\cdot2^\delta \cdot (\epsilon\norm{q}^{-1}+2 \alpha r_1 R^{-i+1})^\delta &\leq L\cdot2^\delta \cdot (\epsilon R^{-i+1}+2 \alpha r_1 R^{-i+1})^\delta\\ &= L\cdot2^\delta \cdot (\epsilon+2 \alpha r_1)^\delta R^{\delta(-i+1)}.
\end{align*}
The disjoint balls we are choosing from each have volume at least
$$L^{-1}\cdot2^\delta \cdot (\alpha r_1 R^{-i+1})^\delta=L^{-1}\cdot2^\delta \cdot (\alpha r_1)^\delta R^{\delta(-i+1)}.$$
Dividing, the number of balls that fit inside the region we are avoiding is at most
$$L^{2}\cdot\left(\frac{\epsilon+2 \alpha r_1}{\alpha r_1}\right)^\delta,$$
and we can still choose a good ball if the estimate from Lemma  \ref{lemma:BobbySinger} is bigger:
$$L^{-2}\cdot(6\alpha)^{-\delta} > L^{2}\cdot\left(\frac{\epsilon+2 \alpha r_1}{\alpha r_1}\right)^\delta.$$
\begin{equation}
\label{eq:feasibility}
1 > L^{4}(6\alpha)^\delta\left(\frac{\epsilon+2 \alpha r_1}{\alpha r_1}\right)^{\delta}.
\end{equation}
Recall that we are given $L$, and then select $\alpha$ and after that $\epsilon$ and $r_1$. Suppose that we chose $\epsilon$ much smaller than $\alpha r_1$. Then the fraction in \eqref{eq:feasibility} is approximately equal to 2. This reduces the equation to one of our starting assumptions on $\alpha$, so we are able to pick the ball for White's $(i+1)$st move.
\end{proof}

\section{Acknowledgments}

Joseph Vandehey was supported in part by the NSF RTG grant DMS-1344994.  Anton Lukyanenko was supported by NSF RTG grant DMS-1045119. Part of the research was conducted by both authors while at MSRI, with support of the GEAR Network (NSF RNMS grants DMS 1107452, 1107263, 1107367). The authors would like to thank Ralf Spatzier discussions about this paper.

\bibliographystyle{amsplain}
\bibliography{citations}

\end{document}